\documentclass[12pt,fleqn]{amsart}
\usepackage{latexsym}
\usepackage{amsmath}
\usepackage{amssymb}
\usepackage{epsfig}
\usepackage{amsfonts}
\usepackage{graphics}    

\usepackage[margin=2cm]{geometry}
\usepackage{tikz}
\usepackage{subcaption}
\usetikzlibrary{positioning}
\usetikzlibrary{calc}

\usepackage[utf8]{inputenc}

\usepackage[english]{babel}
\usepackage{comment}

\usepackage{amsmath, amssymb, textcomp, amsthm, thmtools}
\usepackage[T1]{fontenc}

\usepackage{xcolor}

\usepackage{graphicx} 

\usepackage{hyperref}
\usepackage[capitalize]{cleveref}

\usepackage[stable]{footmisc}  

\usepackage{xargs}     

\usepackage{tikz}
\usetikzlibrary{matrix}
\usetikzlibrary{positioning}
\usetikzlibrary{cd}
\usetikzlibrary{arrows}

\newtheorem{lemma}{Lemma}[section]
\newtheorem{proposition}[lemma]{Proposition}
\newtheorem{theorem}[lemma]{Theorem}
\newtheorem{corollary}[lemma]{Corollary}

\theoremstyle{definition}

\newtheorem{definitions}[lemma]{Definitions}

\newtheorem{examples}[lemma]{Examples}

\newtheorem{comments}[lemma]{Comments}

\makeatletter
\newcommand{\dashedrightarrow}[1][2pt]{%
  \settowidth{\@tempdima}{$\longrightarrow$}\rightarrow
  \makebox[-\@tempdima]{\color{white}\rule[0.5ex]{#1}{1pt}\,}
  \phantom{\longrightarrow}
}
\makeatother
\usepackage{fancyhdr}

\usepackage{ifthen}
\newcommand{\stacks}[2][]{\cite[\href{http://stacks.math.columbia.edu/tag/#2}{\ifthenelse{\equal{#1}{}}{Tag #2}{#1, Tag #2}}]{stacks-project}}  

\let\phi\varphi
\let\varphi\phi

\usepackage{fancyhdr}

\renewcommand{\tilde}{\widetilde}
\renewcommand{\bar}{\overline}

\usepackage{mfirstuc}  
\usepackage{listings}  

\mathchardef\mhyphen="2D

\let\subset\subseteq

\begin{document}
          \def\square{\Box}
           \newtheorem{remarks}[definition]{Remarks}
          \newtheorem{observation}[definition]{Observation}
           \newtheorem{observations}[definition]{Observations}
             \newtheorem{algorithm}[definition]{Algorithm}
           \newtheorem{criterion}[definition]{Criterion}
            \newtheorem{algcrit}[definition]{Algorithm and criterion}


\title[Whittaker groups and hyperelliptic curves]{ Whittaker groups and hyperelliptic curves \\ \mbox{ } \\
{{\it dedicated to the memory of Harm H.~Voskuil}}}
\fancyhead[EC]{Marius van der Put and Jaap Top}
\fancyhead[OC]{Whittaker groups and hyperelliptic curves}
\author{Marius van der Put and  Jaap Top}
\address{Bernoulli Institute,  Nijenborgh 9,
9747 AG~Groningen, the Netherlands.}
\email{m.van.der.put@rug.nl, j.top@rug.nl}
\date{\today}

\begin{abstract} Let $K$ be a complete, non-archimedean valued field with a residue field of characteristic different from $2$. A Whittaker group $\Gamma$ is a discontinuous subgroup of $PGL(2,K)$, freely generated by elements $s_0,\dots ,s_g$ of order two, each defined by a pair of fixed points 
$\{a_0,b_0\},\dots ,\{a_g,b_g\}$. These fixed points are called ``in good position''.

 A subgroup $W\subset \Gamma$ of index 2 is a Schottky group and produces a
hyperelliptic Mumford curve $\Omega/W\rightarrow \Omega/\Gamma \cong \mathbb{P}^1$, called {\it Whittaker curve},
 of genus $g$ and with branch locus $\mathbb{B}\subset \mathbb{P}^1(K)$.

An explicit parametrization of Whittaker curves in terms of theta functions for $W$ and $\Gamma$ and the data
of the fixed points, is developed. In particular, this allows one to express the branched points (and other data such as $p$-adic periods and $p$-adic heights)
 in terms of values of theta functions. 

 A central theme of this paper is the relation between the fixed points and the branch locus. For a given configuration
 $(P,m)$ of $g+1$ pairs of points in $\mathbb{P}^1$, one defines a rigid space  $Fix_{P,m}$ of fixed points in good position with that configuration and  a rigid space of branched points $Branch_{P,m}$ in that configuration. A main result is that the
 natural morphism $FB\colon Fix_{P,m}\rightarrow Branch_{P,m}$ is a rigid \'etale  covering with Galois group
 $\{\pm 1\}^{d-1}$ for some $d\geq 1$.  

For all cases of genus $g=2,3$ (and for some more),  an approximation of $FB$ is computed which confirms the
main result.

Classification of Whittaker groups and analytic reductions of Whittaker curves  is another important issue of this paper. 
The background material in this paper complements the work of L.~Gerritzen, G.~Van Steen, F.~Herrlich  and others.
It involves re-examination of some proofs, the derivation of  properties of  semi-stable analytic reductions and  studying
 good position of fixed points.  
\end{abstract}

\maketitle

\section{\rm Introduction and Summary}

 $\ \ \ $ E.T.~Whittaker (1899 and 1929), \cite{Wh1, Wh2}, studied the uniformization of  hyperelliptic curves over $\mathbb{C}$ with the aim of finding curves having
 an explicit uniformizing Fuchsian subgroup of $PGL_2(\mathbb{R})$ and an explicit equation as well. This topic is still active.
 
 Whittaker groups $\Gamma \subset PGL_2(K)$, the analogues over a non-archimedean valued field $K$, were introduced 
 in \cite{P, G-vdP}. They are natural examples for Schottky groups and Mumford curves. 
  There is a renewed interest in Whittaker groups because  the uniformization of hyperelliptic Mumford curves (here called Whittaker curves)
 is of importance for the explicit computations of $p$-adic periods, $p$-adic heights and a $p$-adic BSD conjecture (see also \cite{T}).   
The aim of the group of researchers E.~Kaya, M.~Masdeu and  J.S.~M\"uller \cite{KMM} is to develop algorithms and examples for these
$p$-adic heights. This is  the direct inspiration for the current manuscript.\\

In the present paper we also revisit \cite{P,G-vdP,F-vdP} in order to complete proofs, to correct mistakes  and to answer old and new questions.
Proofs of statements which can easily be found in \cite{G-vdP} and/or \cite{F-vdP} are not reproduced. 
Beyond that, the  important 
themes are: explicit formulas (Propositions~\ref{3.1} and \ref{3.2} and \ref{3.3}) for the uniformization of Whittaker curves; a result
(Theorem~\ref{new5.8}) explaining  the relation between fixed points and 
branch points and  finally (Sections~\ref{explicit1} and \ref{explicit2}) explicit examples for genus 2 and 3. We now describe in  more detail the contents of each following section.\\
 
 The main characters are introduced in \textit{Section~\ref{Section2}}, namely:  the field $K$, good position for fixed points, Whittaker groups $W\subset \Gamma$, 
 Schottky groups, ordinary points $\Omega\subset \mathbb{P}^1_K$, (potential) Mumford curves. It continues with an overview of 
analytic reductions of some rigid spaces, e.g., $\mathbb{P}^1_K$ and its open subspaces, and curves over $K$. 

 For a pair $(X,S)$ with $X/K$ a curve and $S\subset X(K)$ a finite set, one shows that (after a finite separable extension of $K$, see
Corollary~\ref{1.2}) 
 there  exists a {\it minimal semi-stable reduction, that separates the points $S$}. If genus$(X)+\# S\geq 3$, then this reduction is unique 
 and will be denoted by $\overline{(X,S)}$. 
 
  A {\it criterion \textrm{(Theorem~\ref{1.4})} for a curve $X$ over $K$, to be  a Mumford curve}, is given a ``complete proof''. This complements the incomplete   proof in \cite{G-vdP, F-vdP} of the following  statement (which is part of the proof):
\begin{quote}
``A one-dimensional space $\Omega$, having a semi-stable reduction $\overline{\Omega}$ which is a tree of projective lines
over the residue field (and satisfies a natural  condition on the nodes), has an embedding into $\mathbb{P}^1_K$ as complement of a
compact subset of $\mathbb{P}^1(K)$. This embedding  is unique up to the action of $PGL_2$ on $\mathbb{P}^1$.''
\end{quote}
We note that the above statement is a variation on \cite[\S 4]{Mu}, where a certain formal scheme over a complete 
local ring is ``embedded'' in $\mathbb{P}^1$. 
More criteria concerning Whittaker curves are given. For example (combining Theorems~\ref{1.4}, \ref{1.5} and \ref{1.7}):  
\begin{quote}
``Let $X/K$ be a hyperelliptic curve and $S$ its set of Weierstrass points (this coincides with the fixed points of the hyperelliptic involution).
Then $X$ is a Whittaker curve if and only if the semi-stable reduction for  the pair $(X,S)$ exists over $K$ and all its irreducible
components are projective lines over the residue field.''
\end{quote} 

In \textit{Section~\ref{Section3}}, one is  given a hyperelliptic curve $X\rightarrow \mathbb{P}^1_K$ with ramification locus  $S\subset X(K)$ and branch locus $\mathbb{B}\subset \mathbb{P}^1(K)$. Over a suitable extension of the field $K$, the obvious ramified covering of 
 $\overline{(\mathbb{P}^1,\mathbb{B})}$ of degree two, produces explicitly, the  semi-stable reduction $\overline{(X,S)}$. 
 It is shown in Corollary~\ref{twist} that a Whittaker curve over a field $K$ is determined by its branch locus in $\mathbb{P}^1(K)$. This extends to arbitrary genus a result of
 Teitelbaum \cite{T}, who concluded this for genus~$2$.

    In \textit{Section~\ref{Section4}}, starting from the morphism $\overline{(X,S)}\rightarrow \overline{(\mathbb{P}^1,\mathbb{B})}$ and the corresponding map of 
   the dual  graphs  $\overline{(X,S)}^d\rightarrow \overline{(\mathbb{P}^1,\mathbb{B})}^d$,  {\it fundamental domains}
   for the Whittaker group $\Gamma$ are constructed. On the dual graphs this is done by omitting for each pair of conjugated edges and each pair of 
   conjugated vertices one of the items. The result is a (special) maximal subtree of $\overline{(X,S)}^d$.
   Let $F_0$ be a connected component of the  preimage of this maximal subtree in the universal covering $\overline{\Omega}^d$ of $\overline{(X,S) }^d$. 
 Now $F_0$ and the corresponding spaces $F$ and  $\overline{F}$  in $\Omega \subset \mathbb{P}^1$ and $\overline{\Omega}$ 
 are called {\it fundamental domains}.
 
The fundamental domain $F$ is a connected affinoid subspace of $\Omega \subset \mathbb{P}^1$. The intersection of $F$
 with the set of all fixed points for $\Gamma$  is the union of $g+1$ pairs of points $\{a_i,b_i\}_{i=0}^g$. Let $s_i\in PGL_2$
 be the order 2 element with fixed points $a_i,b_i$. It is shown  (Theorem \ref{2.2}) that {\it  $\Gamma =<s_0>*\dots *<s_g>$} is
the Whittaker group corresponding to $X$.
 The action of the generators $s_i$ of $\Gamma$ on the universal covering $\overline{\Omega}^d$ of $\overline{(X,S)}^d$ is 
 made explicit in three  typical cases.  

In  \textit{Section~\ref{Section5}}  these examples are presented, and the notion of {\it configuration} is introduced.  
 Let $\mathbb{B}=\{a_0,b_0,\dots ,a_g,b_g\}\subset \mathbb{P}^1$ be the branch locus of a Whittaker curve. Its dual graph
 $P:=\overline{(\mathbb{P}^1, \mathbb{B})}^d$ is a tree  with a map $m$ from the set of symbols $\{a_0,b_0,\dots ,a_g,b_g\}$ to the vertices
of $P$, satisfying 
 certain properties (see Definitions~\ref{2.4} for details).\\
  We will call a finite tree $P$ and a map $m$ from symbols $\{a_0,b_0,\dots ,a_g,b_g\}$ to its vertices satisfying these properties,
 {\it a configuration $(P,m)$ of genus $g$.} 
 Note, in passing, that this notion is close to what is called ``clusters'', see \cite{D-D-M-M}.\\
 
 To $(P,m)$ we associate a space $Branch_{P,m}$ consisting of all branch loci $\mathbb{B}\subset \mathbb{P}^1$
 (considered up to conjugation by $PGL_2$) such that $\overline{(\mathbb{P}^1, \mathbb{B})}^d$ is a configuration of type $(P,m)$.  
 This is a rigid analytic space in a natural way.\\

  Fixed points $\{a_0,b_0,\dots , a_g,b_g\}\subset \mathbb{P}^1$ for involutions  produce a point of $Branch_{P,m}$ for a 
  certain configuration. There  is a notion ``restricted'' for fixed points (see Definitions~\ref{2.4}) and this leads to 
  a rigid analytic subspace $Fix_{P,m}$ of $Branch_{P,m}$.
  It is shown in Section~\ref{Section7}, that ``restriction'' for  fixed points implies ``good position''. The map $FB\colon Fix_{P,m}\rightarrow Branch_{P,m}$ which attaches to a set of
  restricted fixed points  the branch points of the Whittaker curve is a well defined rigid analytic map.  This leads to a main result  (Theorem~\ref{new5.8}):
 \begin{quote} 
{\it    $FB\colon Fix_{P,m}\rightarrow Branch_{P,m}$ is an unramified, rigid analytic, Galois covering with Galois group $\{ \pm 1\}^{d-1}$. 
   Here $d\geq 1$ is defined by: $2^d$ is the number of fundamental domains for the configuration $(P,m)$.}
 \end{quote}

  One considers  fixed points $\{a_i,b_i\}_{i=0}^g$ in good position, the Whittaker group $W\subset \Gamma$ and the resulting
   hyperelliptic curve $X$  with affine equation 
 \[y^2=c(x-w_1)\cdots (x-w_d) \mbox{ with $d=2g+2$ or $d=2g+1$}.\]
 Here $x$ stands for the theta function $\Theta:=\prod _{\gamma \in \Gamma}\frac{z-\gamma (a)}{z-\gamma (b)}$  (for suitable $a,b$)
 for the group $\Gamma$. The $\{w_i\}$ are the images of the $\{a_i,b_i\}$ under $\Theta$. This produces an explicit formula for
 the map $FB\colon Fix_{M,p}\rightarrow Branch_{P,m}$.  The function $y$ is seen as $W$-invariant function on $\Omega$. 
 In {\it Section} 6, a delicate expression for $y$ as product of theta functions for the action of $W$ on $\Omega$  is given for both cases
 $d=2g+1$ and $d=2g+2$. \\

 In \textit{Section~\ref{Section7}} we present the information on ``good position for sets of fixed points  $\mathbb{F}=\{a_i,b_i\}_{i=0}^g$'' 
 that we are aware of. First of all there is a {\it necessary condition} that can be stated as:  $\mathbb{F}$ satisfies the properties for a
  set of branch points of a Whittaker curve (i.e., $\mathbb{F}\in Branch_{P,m}$ for some $(P,m)$).
  A {\it sufficient condition} is the ``closed disks condition'' which says that the $g+1$ pairs are separated by 
  closed disks (this translates into a property of the configuration $(P,m)$).
  
   A ``closed formula'' for good position seems impossible, even for $g=2$.
  However one can work out an algorithm deciding ``good position''. 
  It is shown (Proposition~\ref{5.2}), that a restricted set of fixed points is in good position. This implies that the map 
  $Fix_{P,m}\rightarrow Branch_{P,m}$ is well defined.

    The problem of characterizing good position is a special case of the embedding problem of finite trees 
 of finite groups into the (generalized) Bruhat-Tits tree of $PGL_{2,K}$.  We give an overview of the related papers \cite{S,He,vdP-V}.\\

 In \textit{Section~\ref{explicit1}},  formulas for $FB\colon Fix_{P,m}\rightarrow Branch_{P,m}$ are made explicit for the three cases
 with $g=2$. A suitable approximation of the formula of $FB$ shows that $FB$ is an unramified covering of
 degree 1 or 2 for the three cases. This provides an independent proof  of Theorem~\ref{new5.8}  for genus 2.
 
\textit{Section~\ref{explicit2}} lists all configurations $(P,m)$ of genus 3. For all cases (and some more, see Proposition~\ref{9.1}), the statements of 
 Theorem~\ref{new5.8} obtain in this way a proof by computation.

 \section{\rm Whittaker groups, analytic reductions}\label{Section2}

\subsection{\rm Whittaker groups and Schottky groups}\label{subsec2.1} 
$K$ denotes a complete, non-archimedean valued field. Let $K^{oo}$ denote the maximal ideal of the valuation ring $K^o$ of $K$. In the case of hyperelliptic curves and Whittaker groups, we assume that
the characteristic of the residue field $k=K^o/K^{oo}$ is different from 2. Some results discussing the remaining
case, namely residue characteristic $2$, one finds in G.~Van Steen's work \cite{S}, \cite{S2}.
If needed or convenient, we will  suppose that the valuation is discrete and write $\pi$ for a generator $K^{oo}$.\\
 
Let $S=\{a_0,b_0,\dots , a_g,b_g\}\subset \mathbb{P}^1(K)$ be a set of $2g+2$ points, considered as the union of the 
$g+1$ pairs $\{a_i,b_i\}, \ i=0,\dots ,g$.   Let $s_i\in PGL(2,K)$
be the element of order two with fixed points $\{a_i,b_i\}$.   The set $S$ is said to be in {\it good position} if the group $\Gamma$ 
generated by $\{s_0,\dots s_g \}$ is discontinuous and is isomorphic to the free product $\langle s_0\rangle *\cdots *\langle s_g\rangle$.
In the sequel we will omit the well-known case $g=1$ that leads to Tate's elliptic curve, and suppose $g\geq 2$.\\

{\it Let $S$ in good position be given}. Write $w_i:=s_is_0$ for $i=1,\dots ,g$. By assumption, there are no relations between the elements
$\{w_i\}_{i=1}^g$. The group $W$, freely generated by $\{w_i\}_{i=1}^g$, is also discontinuous.\\

 We recall that a {\it Schottky group $G$ over $K$} is a finitely generated subgroup of $PGL(2,K)$ which is discontinuous and has no elements, $\neq 1$,  of finite order. It is known that $G$ is a free group on $g\geq 1$ generators. 
 
 Here we consider the case $g\geq 2$. The
 set $\Omega \subset \mathbb{P}^1_K$ of ordinary points for $G$ is a rigid open subspace.  The quotient $\Omega/G$ is the analytification of an algebraic curve over $K$ of genus $g$ (see \cite{G-vdP,F-vdP} for details). This curve is called the  {\it Mumford curve} over $K$, attached to the Schottky group $G$.  
An algebraic curve $X$ over $K$ is called a {\it potential Mumford curve}  if there is a finite field extension $L\supset K$ such that $X\times _KL$
 is a Mumford curve.\\

 Thus, by definition, the above group $W=<w_1,\dots ,w_g>$ is a Schottky group. In this special situation, we will call $W$ (or, maybe better, the pair $W \subset \Gamma$) a {\it Whittaker group}. 
We call $S$ a set of {\it fixed points}.\\

Since $W$ has index 2 in $\Gamma$, the two subgroups of $PGL(2,K)$ have the same set of limit points 
$\mathcal{L}$. This set is compact and has no isolated points. Since $\mathbb{P}^1(K)$ is invariant under $\Gamma$, one 
has $\mathcal{L}\subset \mathbb{P}^1(K)$. The complement $\Omega:=\mathbb{P}^1_K\setminus \mathcal{L}$ has the structure of a rigid analytic space.

   {\it Example}. For the case $K=\mathbb{Q}_p$, one can construct Whittaker groups  where
$\mathcal{L}=\mathbb{P}^1(\mathbb{Q}_p)$ and $\Omega$ is the well-known '' $p$-adic upper half plane''.\\

\subsection{\rm  Analytic reductions and subspaces of $\mathbb{P}^1_K$}\label{subsec2.2}

{\it  Here we review  some sections  of \cite{F-vdP}, especially \S 4.8-4.9. } 

The standard affinoid algebra $T_n:=K\langle z_1,\dots z_n\rangle$ consists of the
power series, in $n$ variables, $f=\sum _{m_1,\dots ,m_n\geq 0} a_{m_1,\dots , m_n}z_1^{m_1}\cdots z_n^{m_n}$ with $\lim a_{m_1,\dots ,m_n}=0$. The norm 
$\|\ f \|$ of $f$  is $\max (|a_{m_1,\dots , m_n}|)$. An affinoid algebra $A$ over $K$ is a finite extension of some $T_n$. If the affinoid algebra is reduced, then
it has a unique norm, called the spectral norm $\|\  \|_{sp}$, which satisfies $\| F^n\|_{sp}=\|F\|_{sp}^n$ for all $F\in A, \ n\geq 1$.\\ 

Suppose that $A$ is reduced and that the spectral norm $\| \ \|_{sp}$ takes its values in $|K|$.  Put $A^o=\{a\in A\ |\ \|a\|_{sp}\leq 1\}$, $A^{oo}=\{a\in A\ |\ \|a\|_{sp}<1\}$,
then $\overline{A}=A^{o}/A^{oo}$ is a finitely generated, reduced algebra over $k=K^o/K^{oo}$.

 The affinoid space $Sp(A)$, associated to $A$, is the set of the maximal ideals
of $A$. The canonical reduction $Red\colon Sp(A)\rightarrow Max(\overline{A})$ from $Sp(A)$ to the maximal ideal space
of $\overline{A}$ is given by $x\in Sp(A)$ is mapped to the image of $x\cap A^o$ in $\overline{A}$ (which is a maximal ideal). For 
the special case $Sp(T_n)=\{ (z_1,\dots, z_n)\ | \ \mbox{all } |z_i|\leq 1\}$ one has $\overline{T_n}=k[z_1,\dots ,z_n]$ and $Red$ sends $(a_1,\dots ,a_n)\in K^n$ to 
$(\bar{a}_1,\dots , \bar{a}_n)\in k^n$. Note that $Red$ is not a morphism of schemes!  \\
  
The affine formal scheme $Spf(A^o)$, corresponding to the affinoid space $Sp(A)$ is associated 
 to $A^o$ as projective limit $\underset{\leftarrow}{\lim}\  A^o/\pi ^mA^o$.  Here, we suppose for convenience that $K$ has discrete valuation. 
The underlying topological space of $Spf(A^o)$ is $Spec(\overline{A})$.\\

A rigid space $X$ can be defined as a locally finite union of affinoid spaces. Algebraic varieties over $K$ have 
the (essentially unique) structure of a rigid space.  If $X$ is given a pure affinoid covering
$\{ Sp(A_i) \}$, then this induces a reduction map $Red\colon X=\cup _i Sp(A_i)\rightarrow \cup _i Max(\overline{A}_i)$. 

The term ``pure''  denotes the condition that the  $\{Max (\overline{A}_i)\}$ glue in an obvious way to a variety over $k$. 
A pure affinoid covering induces the formal structure  obtained by gluing the affine formal spaces $Spf(A_i^o)$.
 Its underlying (locally finite) $k$-variety is $\cup \ Spec(\overline{A_i})$. 
On the other hand, a structure on $X$ as formal scheme over $K^o$ induces a pure affinoid covering of $X$. \\

Suppose that for a variety $X/K$ has a model $\mathcal{X}$ over $K^o$ is given (assume that  the valuation of $K$
is  discrete). Then the limit of the spaces $\mathcal{X}\otimes K^o/(\pi ^m)$ defines a formal structure on $X$. \\

The reduction $Red\colon X\rightarrow Z$ of a 1-dimensional rigid space $X$ is called {\it semi-stable} if the only 
singularities of $Z$ are ordinary double points (nodes). The reduction is called {\it stable} if it is semi-stable
and there is no irreducible component, isomorphic to a projective line over $k$, and meeting at most
two other irreducible components. The stable reduction of an algebraic curve (of genus $\geq 2$) is unique.  Any semi-stable
 reduction as obtained by (successive) blowing up point(s) of the stable reduction.\\

\noindent {\it Examples of reductions}.\\
(a). The pure covering $\{ z|\ |z|\leq |\pi |\}, \{z|\ |\pi|\leq |z|\leq 1\}, \{ z| \ |\frac{1}{z}|\leq 1\}$ of $\mathbb{P}^1_K$ defines a reduction
$Red\colon \mathbb{P}^1_K\rightarrow Z$, where $Z$ is the union of two projective lines $L_1,L_2$ over $k$, intersection normally in one point $p$.
Now $Red^{-1}(p)=\{z |\ |\pi| <|z| < 1\}$ and for other points of $Z$, the preimage is an open disk. A general result is:  Every reduction $Z$ of $\mathbb{P}^1_K$ is a finite tree of projective lines over $k$ meeting in ordinary double points (nodes).\\

\noindent (b). Let $A\subset \mathbb{P}^1(K)$ be a finite set with $\# A\geq 3$. There is a unique minimal reduction 
$Red\colon \mathbb{P}^1_K\rightarrow \overline{(\mathbb{P}^1_K   ,A)}$ that separates $A$. This means that $Red$ maps the points
of $A$ to distinct points and also distinct of the nodes.   The minimality means that every irreducible component of the reduction has at least 3
special points, namely the images of $A$ and the nodes. We note that a reduction can be made ``minimal'' by contracting, one by one, irreducible components which have less than three special points. \\

\noindent (c). \label{geval(c)} Let $\mathcal{L}\subset \mathbb{P}^1(K)$ be a compact, perfect set, i.e., $\mathcal{L}$
has no isolated points. Then $\Omega=\mathbb{P}^1_K$ is a rigid space over $K$. There is a unique 
minimal reduction $Red\colon \Omega \rightarrow \overline{\Omega}$. The $k$-variety is a tree of projective lines over $k$
and lines intersect in $k$-rational nodes $d$ with complete local ring $k[[x,y]]/(xy)$. The local ring at $d$ of the 
associated formal scheme is $K^o[[x,y]]/(xy-a)$ for some  $a\in K$ with $0<|a|<1$.  We will call $|a|$ the {\it size} of the double point.\\ 

One can construct this reduction as follows.  For any three  distinct points $p=(p_0,p_1,p_\infty)$ in $\mathcal{L}$, there
is a unique reduction $Red_p\colon \mathbb{P}^1_K\rightarrow \mathbb{P}^1_k$ such that 
$Red_p$ sends $p_0,p_1,p_\infty$ to $0,1,\infty \in \mathbb{P}^1(k)$. Any finite number of these
reductions can be combined into a reduction. The number of reductions is countable and by taking a limit,
taken over all finite subsets, one obtains the canonical reduction $\overline{\Omega}$.  We note that every
component of this reduction is a projective line over $k$ and meets the other components in at least three points.\\

\noindent (d).  Let $S\subset \mathbb{P}^1(K)$ be a countable set such that the topological closure $\overline{S}$
of $S$ is compact. Assume that the points of $S$ are isolated in $\overline{S}$ and that 
$\mathcal{L}= \overline{S}\setminus S$ is perfect. Then the rigid space $\Omega=\mathbb{P}^1_K \setminus \mathcal{L}$ has a unique minimal reduction $Red\colon \Omega \rightarrow \overline{(\Omega, S)}$
which separates $S$.  \\

\subsection{\rm  Semi-stable reductions of Mumford curves}

\begin{theorem}{\rm (Stable reduction theorem)}. \label{1.1} Let $X/K$ be a smooth, connected, projective curve of genus $\geq 2$. There is a finite, separable extension $K'\supset K$ such that $K'\times _KX$ has a stable reduction. 
This reduction is unique.
\end{theorem}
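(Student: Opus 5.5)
The plan is to reduce to the Deligne--Mumford stable reduction theorem over a complete discrete valuation ring, and then use the rigid-analytic dictionary of \S\ref{subsec2.2} (models $\leftrightarrow$ formal structures $\leftrightarrow$ pure affinoid coverings) to pass to an arbitrary complete non-archimedean $K$. Since the statement is classical, I would not reprove it from scratch; the aim is to record a route compatible with the language of reductions used in this paper.

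\textbf{Existence.} Take first the case of a discrete valuation. Choose a prime $\ell\geq 3$ different from the residue characteristic. The points of $J_X[\ell]$ are defined over a finite separable extension $K_1\supset K$, because multiplication by $\ell$ is étale. Over $K_1$ the inertia group acts trivially on $T_\ell(J_X)$, so it acts unipotently, and Grothendieck's criterion then gives semi-stable reduction of $J_X$ over $K_1$. The Deligne--Mumford criterion (equivalently Raynaud's theory of the Picard functor of a regular model) says that $J_X$ is semi-stable if and only if $X$ is. Hence a regular model of $X_{K_1}$ can be made semi-stable. Contracting the $(-1)$- and $(-2)$-curves, that is, projective lines meeting the rest of the fibre in fewer than three points, then yields the stable model.

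For a general complete $K$ (non-discrete valuation) I would use the rigid-analytic proof of Bosch--Lütkebohmert. One covers $X$ by finitely many affinoids whose reductions are smooth or whose pieces are disks and annuli, after a finite separable extension. This uses the semi-stable reduction of affinoid curves, which is obtained via the reduced fibre theorem. The resulting pure covering defines, as in \S\ref{subsec2.2}, a reduction $Red\colon X\to Z$ whose only singularities are nodes. Components of $Z$ that are projective lines with at most two special points are then contracted, exactly as in Example~(b), to reach a stable reduction. Separability can always be arranged: inseparable extensions do not change the normalized reduction in this setting, so one can pass to the separable closure.

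\textbf{Uniqueness.} Suppose $Z_1$ and $Z_2$ are two stable reductions. Their union (the common refinement of the two pure coverings) is a semi-stable reduction $Z_3$ dominating both. Each map $Z_3\to Z_i$ contracts only components that are projective lines with at most two special points. Stability says that no component of $Z_i$ is such a line, so the set of components of $Z_3$ that survive is intrinsic: they are exactly the components that are not rational with fewer than three special points. Hence $Z_1\cong Z_2$ compatibly with $X$. The hypothesis genus $\geq 2$ ensures that this set of surviving components is non-empty and that the contraction process terminates.

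The main obstacle is the existence step over a non-discrete $K$. There one cannot simply quote Deligne--Mumford, and the reduced fibre theorem has to be invoked. In the paper I would cite it (Bosch--Lütkebohmert, or \cite{F-vdP}) rather than reprove it.
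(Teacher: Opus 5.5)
Delegating existence to the literature is in line with the paper. The paper does not prove Theorem~\ref{1.1} in general either: it cites the literature, asserts uniqueness, and gives an explicit construction only for hyperelliptic curves with residue characteristic $\neq 2$ (Proposition~\ref{2.1}). That construction pulls back $\overline{(\mathbb{P}^1,\mathbb{B})}$ and enlarges $K$ until the relevant absolute values and residues are squares. The trouble lies in the steps you argue yourself.

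In the discrete case, it is false that inertia acts trivially on $T_\ell(J_X)$ over $K_1=K(J_X[\ell])$; for a Tate curve it acts through a nontrivial unipotent matrix. You only get trivial action on $J_X[\ell]$. The passage to unipotent action on $T_\ell(J_X)$ is Raynaud's criterion (SGA~7, Exp.~IX, 4.7), and that is where $\ell\geq 3$ and Grothendieck's quasi-unipotence are used. Likewise, the remark that ``inseparable extensions do not change the normalized reduction'' is a heuristic, not an argument. Separability should be part of what you cite.

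The uniqueness argument has two steps that fail as stated.

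First, the common refinement of two pure coverings need not be semi-stable. Suppose a component of $Z_1$ and a different component of $Z_2$ specialize to the same point $q$ of a component common to both. Then three branches pass through $q$ in the refinement. You must blow up further, adding the component where the two branches separate (possibly after enlarging $K$), to obtain a semi-stable $Z_3$ dominating both, and this needs an argument.

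Second, a contraction $Z_3\to Z_i$ of semi-stable reductions does not only contract lines with at most two special points. Blow up a smooth point of $Z_i$, and then two distinct points of the exceptional line. That line has three special points in $Z_3$ and is still contracted. So the surviving components are not characterized by your one-step condition, and $Z_1\cong Z_2$ does not follow.

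The missing ingredient is iterated stabilization; this is the content of the paper's remark that every semi-stable reduction arises from the stable one by successive blowing up. You need two facts:
\begin{itemize}
\item Each connected subcurve contracted by $Z_3\to Z_i$ is a tree of lines meeting the rest of $Z_3$ in one or two points. Hence it is removed by repeatedly deleting lines with at most two special points.
\item Conversely, no component removed by this process survives in $Z_i$. Otherwise the image of the removed tree containing it would be a tree of lines meeting the rest of $Z_i$ in at most two points. It cannot be all of $Z_i$, since the genus is $\geq 2$. Such a tree always contains a line with at most two special points, contradicting stability.
\end{itemize}
With these repairs, and a justified construction of a semi-stable $Z_3$ dominating both, the argument works.
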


The existence of a  semi-stable reduction is a highly nontrivial result and has various proofs in the literature.  It can be seen that the stable reduction is unique.  Any semi-stable reduction is obtained by blowing up (successively) points on the stable reduction. 

We will review an explicit proof of the stable reduction theorem for hyperelliptic curves over $K$, 
with residue characteristic $\neq 2$. A useful extension of the stable reduction theorem is the following.

\begin{corollary}\label{1.2}   Let $X/K$ be a smooth, connected, projective curve of genus $\geq 2$ and 
$S\subset X(K)$ a finite set.  There is a finite, separable extension $K'\supset K$ such that 
$X':=K'\times _KX$ has a minimal semi-stable reduction $Red\colon X'\rightarrow \overline{(X',S)}$ which separates
the points of $S$. This reduction is unique.
\end{corollary}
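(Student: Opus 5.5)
We deduce the corollary from the stable reduction theorem (Theorem~\ref{1.1}) by blowing up and then contracting. First apply Theorem~\ref{1.1}. This gives a finite separable extension $K_1\supset K$ such that $X_1:=K_1\times_K X$ has a stable reduction $Red_1\colon X_1\rightarrow Z_1$.

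The points of $S$ need not be separated by $Red_1$. Two of them may reduce to the same point of $Z_1$, or a point may reduce to a node.

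\textbf{Separating the points.} Consider a smooth point $z\in Z_1$. Its preimage $Red_1^{-1}(z)$ is an open disk, which we identify with $\{t\,|\,|t|<1\}$. The points of $S$ lying in this disk form a finite set $A\subset \{|t|<1\}$. We apply example (b) of \S\ref{subsec2.2} to $A\cup\{\infty\}$ in $\mathbb{P}^1$, if necessary after adding one auxiliary point of absolute value $1$ so that at least three points are available. This gives a minimal reduction of the disk separating $A$, namely a finite tree of projective lines.

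To realize this reduction we need the mutual distances $|a-a'|$, for $a,a'\in A$, to lie in $|K_2^*|$. Since $S$ is finite, only finitely many such values occur. A finite separable extension $K_2\supset K_1$ adjoining suitable roots of uniformizers suffices; if $K$ is not discretely valued these values already lie in the divisible closure of $|K^*|$. After this extension, the pure affinoid covering of the disk by closed disks and annuli around the clusters of $A$ glues with the covering underlying $Red_1$.

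Now consider a node $d$ with local ring $K^o[[x,y]]/(xy-a)$. Its preimage is the open annulus $\{|a|<|x|<1\}$. Each point $s\in S$ in this annulus has a radius $|x(s)|$. We subdivide the annulus at these finitely many radii, inserting projective lines and blowing up the node. Again this may require enlarging the value group by a finite separable extension. Points of $S$ with equal radius are then treated as in the disk case, on the new component.

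After these steps we have a semi-stable reduction $Red'$ of $X':=K'\times_K X$ that separates $S$.

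\textbf{Minimality.} Call a point of a component special if it is a node or the image of a point of $S$. Repeatedly contract rational components that have fewer than three special points, for instance tails with at most one point of $S$, or bridges with no point of $S$. Each contraction keeps the reduction semi-stable and keeps $S$ separated. Contracting a bridge with two nodes merges two nodes of sizes $|a_1|,|a_2|$ into one node of size $|a_1a_2|$. Since the number of components drops at each step, the process terminates in a minimal semi-stable reduction $\overline{(X',S)}$.

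\textbf{Uniqueness.} Take two minimal reductions separating $S$. Both dominate the stable reduction, since any semi-stable reduction is obtained from the stable one by successive blow-ups. So both are obtained from $Z_1\otimes K'$ by blow-ups. The components that are added are determined intrinsically:
\begin{itemize}
\item in a residue disk, they are the reductions $Red_p$ defined by triples of points of $S$ (or nodes) lying in that disk, as in example (c);
\item in an annulus, they are the circles $|x|=|x(s)|$ for $s\in S$.
\end{itemize}
Minimality forbids any other component. Hence the two coverings have a common refinement with no additional components, and the reductions coincide.

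The main obstacle is making the gluing rigorous. One must check that the local modifications in disks and annuli glue to a global pure covering, and that the minimal contraction is independent of the order in which components are contracted. I would handle this by using the intrinsic description of components as "three special points" data, as in example (c).
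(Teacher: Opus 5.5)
Your proposal is correct and follows essentially the same route as the paper. The paper's proof only says that one blows up points of the stable reduction of Theorem~\ref{1.1} until $S$ is separated (by refining the associated pure affinoid covering when the valuation is not discrete), and your contraction and uniqueness steps supply the details the paper leaves implicit. One small simplification is available: the value-group extensions you introduce are unnecessary, because $S\subset X(K)$ means the radii $|a-a'|$ and $|x(s)|$ are already absolute values of elements of $K_1$.
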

 As before, {\it separation} means that the images under $Red$, of the points in $S$, are distinct and 
 are distinct from the nodes of $\overline{(X',S)}$.  Minimality means that there is no irreducible component, isomorphic to the projective line over $k$ which has $\leq 2$ special points. Here, special point means a node or the image of a point
 of $S$.  The proof is obtained by ``blowing up'' points on the given stable reduction in order to ``separate'' $S$.
 In case the valuation of $K$ is discrete, the stable reduction can be seen as a model of the curve over the valuation ring
 $K^o$. This is  a surface and ``blowing up'' is indeed blowing up points on the surface. In the general case
 ``blowing up'' can be realized by refining the pure affinoid covering associated to the given reduction.  \\
 
 For a Mumford curve, the uniformization by a Schottky group provides its stable reduction. 
\begin{proposition}\label{1.3}
 Let $\Gamma \subset PGL_2(K)$ be a Schottky group 
with set of limit points $\mathcal{L}$ and space of ordinary points $\Omega =\mathbb{P}^1_K\setminus \mathcal{L}$.
Then $\Omega/\Gamma$ is the analytification of an algebraic curve $X$ over $K$ (i.e., the Mumford curve). 

Moreover, $\Gamma$ acts on the reduction $Red\colon \Omega \rightarrow \overline{\Omega}$ defined by $\mathcal{L}$.
 The induced map $Red\colon \Omega/\Gamma \rightarrow \overline{\Omega}/\Gamma$  is the stable reduction of $X/K$.
The normalization of every irreducible component of $\overline{\Omega}/\Gamma$ is a projective line over $k$.
\end{proposition}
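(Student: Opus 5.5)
We argue on the tree side: use the $\Gamma$-action on the Bruhat--Tits-type tree attached to the reduction $\overline{\Omega}$, build a fundamental domain from a Schottky basis, and read off the quotient.

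\textbf{Existence of the quotient curve.} We take from \cite{G-vdP,F-vdP} that a Schottky group $\Gamma$ of rank $g$ has a \emph{good fundamental domain}. Concretely, there are $2g$ disjoint open disks $B_1,B_1',\dots ,B_g,B_g'$ in $\mathbb{P}^1_K$ with closed complements. Each generator $\gamma_i$ maps $\mathbb{P}^1\setminus B_i$ onto the closure of $B_i'$, mapping the boundary annulus to the boundary annulus. The affinoid $F=\mathbb{P}^1\setminus\bigcup (B_i\cup B_i')$ then satisfies $\Omega=\bigcup_{\gamma\in\Gamma}\gamma(F)$, and this covering is locally finite because $\Gamma$ is discontinuous. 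Gluing $F$ along the boundary annuli via the $\gamma_i$ gives a proper one-dimensional rigid space $\Omega/\Gamma$. By rigid GAGA (Kiehl/K\"opf), a proper connected smooth rigid curve is the analytification of a projective curve $X$. Its genus can be computed from the reduction below, which gives genus $g$.

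\textbf{$\Gamma$ acts on the reduction.} The reduction $Red\colon\Omega\to\overline{\Omega}$ of example (c) is canonical. It is the limit over finite subsets of $\mathcal{L}$ of the reductions $Red_p$ attached to triples $p=(p_0,p_1,p_\infty)$ in $\mathcal{L}$. Since $\gamma(\mathcal{L})=\mathcal{L}$, and $Red_{\gamma p}\circ\gamma$ and $Red_p$ agree up to the identification of $k$-lines, $\gamma$ permutes the pure affinoid coverings involved. Hence $\Gamma$ acts on $\overline{\Omega}$ by $k$-automorphisms, compatibly with $Red$.

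\textbf{The quotient is the stable reduction.} Let $T$ be the dual graph of $\overline{\Omega}$: vertices are the components, which are projective lines, and edges are the nodes. It is a locally finite tree, and $\Gamma$ acts on it. A Schottky group is free and has no elements of finite order, and one checks that no nontrivial element fixes a vertex or inverts an edge. Indeed, a hyperbolic element fixing a component would fix three limit points determining that component, which is impossible. So the action is free and the quotient graph $T/\Gamma$ is finite with first Betti number $g$, and $\overline{\Omega}\to\overline{\Omega}/\Gamma$ is a local isomorphism.

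The pure covering of $\Omega$ descends to a pure covering of $\Omega/\Gamma$. This is the local-isomorphism property plus the finiteness of $F$ modulo $\Gamma$: only finitely many components meet the image of $F$. Therefore $\overline{\Omega}/\Gamma$ is a reduction of $X$, and it is semi-stable because its only singularities are the images of nodes, which are ordinary double points. The curve $\overline{\Omega}/\Gamma$ may have components meeting themselves, but the normalization of each component is the image of a single $\mathbb{P}^1_k$, which is the last assertion.

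\textbf{Stability and the main obstacle.} Each component of $\overline{\Omega}$ meets other components in at least three points, as noted in (c). Under the free quotient these remain at least three branch points on the normalization. Hence no component is a $\mathbb{P}^1$ with at most two special points, so the reduction is stable. Uniqueness of stable reduction (Theorem~\ref{1.1}) then identifies it as \emph{the} stable reduction. The arithmetic genus equals the first Betti number $g$ of $T/\Gamma$, because all components are rational, which confirms that $X$ has genus $g$.

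The step I expect to be delicate is the descent: showing that the infinite pure covering on $\Omega$ induces a finite pure covering of $\Omega/\Gamma$, that is, that only finitely many $\Gamma$-orbits of components exist. I would handle this via the fundamental domain $F$, showing that every component of $\overline{\Omega}$ is $\Gamma$-equivalent to one whose corresponding affinoid meets $F$.
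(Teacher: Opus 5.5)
Your outline follows the paper's proof: $\Gamma$ acts freely on components and nodes, you form the quotient, stability holds because each component of $\overline{\Omega}$ carries at least three nodes, and uniqueness finishes. Two steps fail as written.

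\textbf{Minor step: freeness.} An element $\gamma$ that stabilizes a component $L$ need not fix the three limit points determining $L$. It only permutes the residue classes of $L$, so your argument does not go through. The correct reason is the paper's. The stabilizer of a component or node lies in a conjugate of $PGL_2(K^o)$, which is a bounded group. A discontinuous torsion-free group meets a bounded group only in $\{1\}$; equivalently, a hyperbolic element has eigenvalues of distinct absolute value and lies in no bounded subgroup. If $\gamma$ inverted an edge, $\gamma^2$ would stabilize a vertex, which forces $\gamma^2=1$.

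\textbf{Genuine gap: the descent.} You claim that $\overline{\Omega}\to\overline{\Omega}/\Gamma$ is a Zariski-local isomorphism and that the pure covering of $\Omega$ descends to one of $\Omega/\Gamma$. This fails in exactly the case you mention: $L\cap\gamma L=\{p\}$ for some component $L$ and some $\gamma\neq 1$. Freeness on the dual tree does not help there, for two reasons.
\begin{itemize}
\item Every Zariski neighbourhood of $p$ meets its $\gamma$-translate, because nonempty opens of the irreducible curve $\gamma L$ intersect.
\item The affinoid $Red^{-1}(p^*)$ attached to the node $p$ meets $\gamma(Red^{-1}(p^*))$ in $Red^{-1}$ of $\gamma L$ minus its nodes. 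So it does not map injectively to $\Omega/\Gamma$.
\end{itemize}
Hence this route gives neither a Zariski quotient nor a pure affinoid covering of $X$ with reduction $\overline{\Omega}/\Gamma$.

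The paper handles this by passing to a normal subgroup $N\subset\Gamma$ of finite index such that $\gamma L\cap L=\emptyset$ for all components $L$ and all $\gamma\in N\setminus\{1\}$.
\begin{itemize}
\item Such an $N$ exists. The offending $\gamma$ form finitely many conjugacy classes: there are finitely many orbits of components, each component has finitely many neighbours, and stabilizers are trivial. Free groups are residually finite.
\item For $N$ your gluing argument does work and produces $Z'=\overline{\Omega}/N$.
\item Then $\overline{\Omega}/\Gamma=Z'/(\Gamma/N)$ is a quotient by a finite group acting freely, and $Z'\to\overline{\Omega}/\Gamma$ is \'etale Galois. The self-intersecting components arise only at this last step.
\end{itemize}
This is the step you need to add. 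By contrast, the finiteness of $\Gamma$-orbits of components, which you flagged as the delicate point, is routine: the affinoid $F$ meets only finitely many members of the admissible covering.
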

\begin{proof} The stabilizer of a component or a node of $\overline{\Omega}$ is a bounded subgroup of $PGL_2$. 
Since $\Gamma$ is discontinuous and has no elements of finite order ($\neq 1$), an element $\gamma \in \Gamma,\ \gamma \neq 1$
does not stabilize any node or component of $\overline{\Omega}$. 

 If $\Gamma$ has the property that $\gamma (L)\cap L=\emptyset $ for every irreducible component $L$ of $\overline{\Omega}$
 and  any $\gamma \in \Gamma, \gamma \neq 1$, then one can form the quotient $Z:= \overline{\Omega}/\Gamma$ with respect to the
 Zariski topology. This is done by choosing a fundamental domain $H$ for the action of $\Gamma$ on $\overline{\Omega}$ and gluing 
 this to a finite number of translates $\gamma H$. Then the components of $Z$ are projective lines over the residue field $k$
 and these lines intersect in ordinary nodes.  Moreover $\overline{\Omega}\rightarrow Z$ is the universal covering of $Z$ for the Zariski
 topology.\\
 
 Suppose that $\gamma_1 L_1\cap L_1=\{p\}$ occurs for some irreducible component $L_1$ and $\gamma_1 \in \Gamma$. Then, a quotient
 of $\overline{\Omega}$ by $\Gamma$ can be constructed as follows.  Replace $\Gamma$ by a normal subgroup 
 $N$ of finite index such that $\gamma L\cap L=\emptyset$ for any $L$ and $\gamma \in N, \gamma \neq 1$. Then 
 $Z':=\overline{\Omega}/N$ is well defined and $Z$ is obtained from $Z'$ by taking the quotient for the action of the finite 
 group $\Gamma /N$. A component $L$ of $Z$ can have ordinary double points and the normalization of $L$ is the 
 projective line over $k$.  For instance, the above point $p$ produces a node on the component with is the image of $L_1$ and $\gamma_1 L_1$. 
 
  The components intersect in ordinary double points.  Since $\Gamma$ acts freely, the
 map $Z'\rightarrow Z=Z'/N$ is an \'etale Galois extension. Further one can give $\overline{\Omega}\rightarrow Z$, the 
 interpretation of the universal covering for the \'etale topology. \\  
 
    Suppose that the irreducible component $L$ of $Z$ is nonsingular and meets at most two components.  Then one shows that 
the same holds for an irreducible component of $\overline{\Omega}$ which maps to $L$. The latter is in contradiction with
(c) of \S\ref{subsec2.2}, p.~\pageref{geval(c)}.  Hence the semi-stable reduction $Z$ is in fact stable. \end{proof}

\begin{theorem}\label{1.4}  Let $X/K$ be a smooth, connected, projective curve of genus $\geq 2$. Then $X$ is a Mumford
curve if and only if $X$ has a (semi-) stable reduction $\overline{X}$ over $K$ and the normalization of every
component of $\overline{X}$ is a projective line over the residue field $k$.  
\end{theorem}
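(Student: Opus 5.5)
The direction ``Mumford curve $\Rightarrow$ reduction'' is Proposition~\ref{1.3}. If $X\cong \Omega/\Gamma$ for a Schottky group $\Gamma$, then the reduction $\overline{\Omega}$ defined by $\mathcal{L}$ is defined over $K$, since $\mathcal{L}\subset\mathbb{P}^1(K)$ and the pure coverings in (c) of \S\ref{subsec2.2} use only $K$-rational data. Hence $\overline{\Omega}/\Gamma$ is a stable reduction of $X$ over $K$, and by Proposition~\ref{1.3} the normalization of each of its components is $\mathbb{P}^1_k$. By uniqueness in Theorem~\ref{1.1}, this is the stable reduction. Any semi-stable reduction is obtained from it by blowing up, which only adds components isomorphic to $\mathbb{P}^1_k$, so the condition holds for every (semi-)stable reduction.

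For the converse, let $Red\colon X\to \overline{X}$ be a semi-stable reduction over $K$ whose components all have normalization $\mathbb{P}^1_k$. I would first pass to the universal covering. Let $T\to\overline{X}$ be the universal (étale/Zariski) covering of the reduction, constructed as in the proof of Proposition~\ref{1.3}. It is a locally finite tree of projective lines over $k$, with group of deck transformations $\pi_1(\overline{X}^d)$, a free group of rank $g$: since every component is rational, $g=b_1$ of the dual graph by the genus formula. The formal structure on $X$ attached to the pure affinoid covering lifts along $T\to\overline{X}$ to a rigid space $\Omega$ with a reduction $\Omega\to T$. The deck group $G$ acts freely on $\Omega$ with $\Omega/G\cong X^{an}$; the nodes are étale locally $K^o[[x,y]]/(xy-a)$, so the covering lifts uniquely.

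The main step, and the one I expect to be the obstacle, is the statement quoted in the Introduction: a one-dimensional rigid space $\Omega$ whose reduction is a tree of $\mathbb{P}^1_k$'s (with $k$-rational nodes of sizes $|a|<1$) embeds into $\mathbb{P}^1_K$ as the complement of a compact subset of $\mathbb{P}^1(K)$, uniquely up to $PGL_2(K)$. I would prove this by exhaustion. Choose an increasing sequence of finite subtrees $T_1\subset T_2\subset\cdots$ with union $T$. The preimage $\Omega_n=Red^{-1}(T_n)$ is a connected affinoid with reduction a finite tree of projective lines, hence isomorphic to the complement in $\mathbb{P}^1_K$ of finitely many open disks. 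I would show this by induction on the number of components, gluing a closed disk minus open disks to an annulus $\{|\pi_a|\le|z|\le 1\}$ at each node of size $|a|$, and using that every affinoid with reduction $\mathbb{A}^1_k$ minus points is a disk minus disks. Each embedding $\Omega_n\hookrightarrow\mathbb{P}^1_K$ extends the previous one: any automorphism-free choice is fixed by normalizing three points on a chosen component, and the extension to $\Omega_{n+1}$ is unique because an analytic isomorphism between disks-minus-disks extending a given one on an annulus is unique. The union gives $\Omega\hookrightarrow\mathbb{P}^1_K$. Its complement is the intersection of decreasing finite unions of closed disks, each containing $K$-rational points, so it is a compact set $\mathcal{L}\subset\mathbb{P}^1(K)$, using completeness of $K$ and the fact that the radii of the removed disks tend to $0$.

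Finally, each deck transformation $\gamma\in G$ is an automorphism of $\Omega$. By the uniqueness of the embedding up to $PGL_2(K)$, applied to $\Omega\hookrightarrow\mathbb{P}^1_K$ and its composite with $\gamma$, we get that $\gamma$ is given by an element of $PGL_2(K)$. This yields $G\subset PGL_2(K)$, finitely generated and free, acting freely and discontinuously on $\Omega$ (it acts freely on $T$). Its limit set is contained in $\mathcal{L}$, and equals it since $\Omega/G$ is compact, i.e.\ $G$ has a fundamental domain which is a finite tree in $T$. So $G$ is a Schottky group with $\Omega/G\cong X^{an}$, and by GAGA $X$ is the Mumford curve of $G$.
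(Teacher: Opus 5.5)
Your ``only if'' direction is fine. Building $T$ and $\Omega$ as the universal covering of the dual graph (with loops for self-nodes) is also legitimate, and it spares you the paper's preliminary blow-up of self-nodes and the passage to $K(\sqrt{\pi})$.

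The gap is in the step you flag yourself. You claim that an embedding of $\Omega_n$ is pinned down by normalizing three points, and that each embedding of $\Omega_n$ extends to one of $\Omega_{n+1}$. Both claims are false.

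A disk-minus-disks has many non-M\"obius automorphisms. If $A=\{|z|\le 1,\ |z-c_i|\ge 1\}$, then every $z\mapsto z+\epsilon(z)$ with $\epsilon\in O(A)$, $\|\epsilon\|<1$, induces the identity on the reduction and is an automorphism of $A$. For instance $\epsilon=\pi(z-u_1)(z-u_2)(z-u_3)$ fixes three points $u_j\in A$.

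It gets worse. Suppose $\Omega_n$ is identified with such an $A$ via the restriction of an embedding of $\Omega_{n+1}$, so that $\Omega_{n+1}$ fills part of the hole $\{|z|>1\}$. Take $\epsilon=\sum_{k\ge 1}c_kz^k$ with $|c_k|<1$, $c_k\to 0$ and radius of convergence exactly $1$. Then the embedding $z\mapsto z+\epsilon(z)$ of $\Omega_n$ does not extend analytically into $\Omega_{n+1}$ at all. Uniqueness of an extension (identity theorem) is not the issue; existence is.

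Consequently:
\begin{itemize}
\item $\bigcup_n f_n$ is not defined by your procedure.
\item The uniqueness up to $PGL_2(K)$ that you invoke at the end to make the deck transformations M\"obius has no proof, since it rests on the same false rigidity.
\item Your inductive proof that $\Omega_n$ is a disk minus disks hides a finite version of the same problem. Gluing along an annulus requires splitting the transition unit on the overlap multiplicatively into units on the two sides. The paper avoids this by completing $\Omega_n$ to a genus-zero curve $\Omega_n^+$.
\end{itemize}

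What is missing is a convergent correction procedure, which is the core of the paper's proof. Embeddings of a connected affinoid $A\subset\mathbb{P}^1$ are encoded by units: $O(A)^*/(K^*\times(1+O(A)^{oo}))\cong M_0(h(A))$. Moreover, $f$ embeds $A$ into $\mathbb{P}^1\setminus\{0,\infty\}$ if and only if $h(f)=\delta_p-\delta_q$.

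The paper then proceeds as follows.
\begin{enumerate}
\item Choose two distinct points of $\underset{\leftarrow}{\lim}\, h(\Omega_n)$ and a base point $a\in\Omega_1$. This gives embeddings $f_n$ with $h(f_n)=\delta_{p_n}-\delta_{q_n}$ and $f_n(a)=1$. They are not compatible, but they satisfy $f_n/f_{n+1}\in 1+O^{oo}(\Omega_n)$.
\item The key estimate: restriction $O^{oo}(\Omega_{n+1})\to O^{oo}(\Omega_n)$ has norm $\le|\pi|$. Here $|\pi|<1$ bounds the sizes of all nodes, which are finitely many modulo the deck group.
\item This estimate makes the telescoping products converge. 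It produces $g_n\in 1+O^{oo}(\Omega_n)$ such that the $g_nf_n$ glue to an embedding $f$ of $\Omega$.
\item The same node bound shows that each hole of $\Omega_{n+1}$ has radius at most $|\pi|$ times that of the hole of $\Omega_n$ containing it. This is what your unjustified claim ``the radii tend to $0$'' needs.
\item Uniqueness is a separate argument. Another embedding $g$ has $h(g|_{\Omega_n})=\delta_{a_n}-\delta_{b_n}$, and these holes shrink to points $\alpha,\beta$. So $g$ is a rational function with divisor $[\alpha]-[\beta]$, hence a M\"obius transform of $f$.
\end{enumerate}
Only after step 5 can you conclude that the deck transformations lie in $PGL_2(K)$.
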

{ The ``only if '' part is Proposition~\ref{1.3}.  The proof of the other direction, as presented in \cite{G-vdP,F-vdP}, is incomplete. We indicate the steps in the long proof of the ``if'' part.}
\begin{proof}  Let a semi-stable reduction $\overline{X}$ be given such that the normalization of every  component is a projective line
over $k$. If a component happens to have a double point $p$, then we refine the pure covering of $X$ in order to replace $p$ by a projective line.
This is possible over $K$ except in the case that the formal local ring at $p$ has the form $K^o[[x,y]]/(xy-\pi)$, where the valuation of $K$
is discrete and $\pi$ is a generator of the maximal ideal of $K^o$. In this special case we replace $K$ by $K(\sqrt{\pi})$ and prove the theorem
for $K(\sqrt{\pi})\times _KX$. After that, one descends to $K$. Now we assume that the components of the semi-stable reduction
$\overline{X}$ are projective lines over $k$.\\

The dual graph $\overline{X}^d$ of the reduction $\overline{X}$ has a universal covering 
$T\rightarrow \overline{X}^d$, which is a tree,  and a covering group $\Gamma$.
This lifts uniquely to the universal covering of $Z\rightarrow \overline{X}$ for the Zariski topology and to the universal covering
$\Omega \rightarrow X$ for the rigid topology. By construction $\overline{\Omega}=Z$. Further $\Gamma$ is the covering group of
$\Omega \rightarrow X$ and $\Omega/\Gamma =X$.  The technical part of the proof is to show that:\\

{\it  The space $\Omega$ has an embedding into $\mathbb{P}^1$ such that its complement $\mathcal{L}$ is a compact perfect subset
 of $\mathbb{P}^1(K)$. Moreover, this embedding is unique up to an automorphism of $\mathbb{P}^1$.
 Further, the action of $\Gamma$ on $\Omega$ extends to an action on $\mathbb{P}^1$ and  $\Gamma$
 identifies as a subgroup of  $PGL_{2}(K)$ preserving $\Omega$. Moreover, $\mathcal{L}$ is the set of limit points
 of $\Gamma$. \\  }

\noindent{\it First step}. Fix a point $t_0\in T$ and consider, for any $n\geq 1$, the subgraph 
$\{t\in T\  |\mbox{ distance}(t,t_0)\leq n\}$.
The corresponding subset $\Omega _n$ of $\Omega$ is affinoid and its reduction $\overline{\Omega_n}$ is a semi-stable finite tree
of lines (projective or affine) over the residue field of $K$. The reduction can be completed to $\overline{\Omega_n}^+$ 
and $\Omega_n$ can be enlarged to a rigid  space $\Omega _n^+$ with reduction $\overline{\Omega_n}^+$.  
This implies that $\Omega _n^+$ is a projective curve and its genus equals the genus of $\overline{\Omega_n}^+$, which is zero.
{\it We conclude that $\Omega_n$ has an embedding $f_n$ into $\mathbb{P}^1$ such that the complement of
$f_n(\Omega_n)$ is a finite (disjunct) union of open disks}. \\

\noindent{\it Second step}. We have to change given embeddings $f_n\colon \Omega_n\rightarrow \mathbb{P}^1$ such that these will glue     
to an embedding of $\cup _n\Omega_n =\Omega \rightarrow \mathbb{P}^1$. \\

Consider a connected affinoid subset $A\subset \mathbb{P}^1$, given as the complement of a finite set $h(A)$ of   
disjoint open disk, called the {\it holes} of $A$. There is a short exact sequence, see \cite[2.7.11]{F-vdP},
\[ 1\rightarrow K^*\times (1+O(A)^{oo})\rightarrow O(A)^*\overset{h}{\rightarrow} M_0(h(A))\rightarrow 1,\]
where $O(A)^{oo}=\{a\in O(A) \ |\ ||a||<1\}$,  $O(A)^*$ are the invertible functions and $M_0(h(A))$ is the group of the maps $m\colon h(A)\rightarrow \mathbb{Z}$ with
$\sum _{p\in h(A)}m(p)=0$.  One can verify the following statement: \\

{\it An $f\in O(A)^*$ provides an embedding of $A$ into $\mathbb{P}^1\setminus \{0,\infty\}$ if and only if
$f$ has image $h(f)=\delta_p-\delta _q$ for certain $p,q\in h(A),\ p\neq q$.}\\
 (i.e., $h(f)$ takes the value $1$ and the value $-1$ for $p$ and $q$ and is otherwise $0$).\\
   
Now we return to the admissible covering $\cup_n \Omega _n$ of $\Omega$. The inclusion $\Omega_n\subset \Omega_{n+1}$ induces a natural map $h(\Omega_{n+1})\rightarrow h(\Omega_n)$ sending a hole of $\Omega_{n+1}$, say $|z-1|<|a|$, to the hole of
 $\Omega _n$ of the form $|z-1|<|b|$ with $|a|<|b|<1$. By construction, this map is surjective. 
 
  We observe that for every node  of $\overline{\Omega}$
the local equation has the form $xy-\alpha$ with $0<|\alpha|<1$ and that for some $\pi$ with $|\pi| <1$ one has
$|\alpha |\leq |\pi|$ for all $\alpha$ (because there are finitely many nodes modulo the action of $\Gamma$).  By using the reductions one concludes that
$|a|\leq |\pi|\cdot |b|$. 

Now consider distinct points $p,q$ in $\underset{\leftarrow}{\lim} \ h(\Omega_n)$, given by sequences $p_1,p_2,p_3,\ldots$
and $q_1,q_2,q_3,\ldots$ and a point $a\in \Omega_1$. Choose an embedding $f_n\in O(\Omega_n)^*$ with image
$\delta_{p_n}-\delta_{q_n}$ and $f_n(a)=1$. 
\[ \mbox{This leads to }\frac{f_n}{f_{n+1}}\in 1+O^{oo}(\Omega_n) \mbox{ for all } n.\]   

\noindent{\it Third step}.  A straightforward computation shows  the restriction map $r_n\colon O^{oo}(\Omega_{n+1})\rightarrow O^{oo}(\Omega_n)$
to have norm $\| r_n\|\leq |\pi |$.  This implies that the map 
\[ d\colon \prod _n O^{oo}(\Omega_n) \rightarrow \prod _n O^{oo}(\Omega_n), \quad (a_1,a_2,a_3,\dots )\mapsto (a_1-a_2, a_2-a_3,\dots )\]
is surjective. Indeed, for any sequence $(b_1,b_2,b_3,\dots )\in \prod _n O^{oo}(\Omega_n)$, the associated sequence
$(\sum_{n\geq1} b_n,\sum_{n\geq 2} b_n, \sum_{n\geq3} b_n, \ldots)$ is well defined and is mapped by $d$ to 
$(b_1,b_2,b_3,\dots)$. The same holds if the additive case $O^{oo}$ is replaced by the multiplicative case $1+O^{oo}$.
This produces elements $g_n\in 1+O^{oo}(\Omega_n)$ such that the $g_n\cdot f_n$ glue to an element 
$f\in O(\Omega)^*$ such that, for all $n$, the map $f$ restricted to $\Omega_n$ is an embedding.
{\it  Thus $f\colon \Omega \rightarrow \mathbb{P}^1$ is an embedding. }\\

The complement of $f(\Omega_n)$ consists of a number of holes, containing the holes of the complement of $f(\Omega _{n+1})$. The latter have radii multiplied  by something $\leq |\pi|$. It follows that
$\mathcal{L}:=\cap \ \{\mbox{complement of } f(\Omega_n)\}$ is a compact set and $\Omega$ is isomorphic to
$\mathbb{P}^1\setminus \mathcal{L}$.\\

Suppose that $g\colon \Omega =\mathbb{P}^1\setminus \mathcal{L} \rightarrow \mathbb{P}^1\setminus \{0,\infty\}$ is another embedding. The restriction of $g$ to $\Omega _n$ maps to $\delta_{a_n}-\delta_{b_n}$ in the set of holes $h(\Omega_n)$. The holes $a_n$ and $b_n$
intersect in the limit to points $a,b \in \mathbb{P}^1$. Then $g$ is a rational function on $\mathbb{P}^1$ with divisor $[a]-[b]$.
Thus the embedding $g$ equals $f$ up to an element in $PGL_2(K)$. 

In particular, for any $\gamma \in \Gamma$, the map $\gamma \colon \Omega \rightarrow \Omega \subset \mathbb{P}^1$ extends to a
rational function and in fact to an element in the automorphism group of $\mathbb{P}^1$.  This identifies $\Gamma$ with a Schottky
subgroup of $PGL_2(K)$. Indeed, $\Gamma$ is finitely generated, free and discontinuous since $\Omega$ consists of ordinary points. 
The set of  limit points of the group $\Gamma $ is contained in $\mathcal{L}$
and in fact equal to $\mathcal{L}$ since the quotient $\Omega/\Gamma$ is a complete curve. 
\end{proof}

\subsection{\rm Semi-stable reductions of Whittaker curves}

Let $S=\{ \{a_i,b_i\}_{i=0}^g\}\subset K \cup \{\infty \}$ denote a set of fixed points in good position.  As in \S\ref{subsec2.1}, this defines
the groups   $W\subset \Gamma =\langle s_0,\dots , s_g\rangle \subset PGL(2,K)$, the set of limit points $\mathcal{L}$ and the subspace $\Omega =\mathbb{P}^1_K\setminus \mathcal{L}$ of $\mathbb{P}^1_K$. We call this  special Mumford curve $X:=\Omega/W$ a {\it Whittaker curve}.

The quotient 
$\Omega/\Gamma$ is also a smooth, connected, complete curve over $K$.  The morphism
$X\rightarrow \Omega/\Gamma$ has degree two and Galois group $\Gamma /W=\{1,\sigma\}$.

\begin{theorem}\label{1.5}  With notations as above,
\begin{itemize}
\item[\textrm{\rm (1)}] The set $S$  belongs to $\Omega$.   
 \item[\textrm{\rm (2)}]  $\Omega/\Gamma\cong \mathbb{P}^1_K$. 
\item[\textrm{\rm (3)}] $X:=\Omega/W$ is a hyperelliptic Mumford curve of genus $g$.
\item[\textrm{\rm (4)}] The sets of  ramification points $Ram_X \subset X(K)$ and  of branch points  $Branch_X\subset \mathbb{P}^1(K)$
are the images of $S$ under $\Omega \rightarrow X$ and $\Omega \rightarrow \mathbb{P}^1_K$.\end{itemize}
\end{theorem}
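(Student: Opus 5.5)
We prove the four items in order; only (2) needs a real argument.

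For (1), we use that $\Gamma$ is discontinuous and isomorphic to the free product $\langle s_0\rangle *\cdots *\langle s_g\rangle$. A limit point of $\Gamma$ is an accumulation point of some orbit $\{\gamma(z)\}$. A point of $S$, say $a_i$, has stabilizer in $\Gamma$ equal to $\langle s_i\rangle$. This holds because in a free product of groups of order two every finite subgroup is conjugate to some $\langle s_j\rangle$. Also, an element fixing $a_i$ that commutes with $s_i$, or shares a fixed point with it, generates together with $s_i$ a bounded or finite subgroup; by discontinuity this subgroup is finite, so it lies in $\langle s_i\rangle$.

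Next we show $a_i\notin\mathcal{L}$. Suppose instead that $a_i$ were a limit point. Then $a_i$ would also be a limit point of $W$, since $W$ has finite index and the same limit set. So there would be infinitely many distinct $w\in W$ with $w(z)\to a_i$ for some $z\in\Omega$. We invoke the standard fact for Schottky groups that limit points are not fixed by elliptic elements. Concretely, use a good fundamental domain for $W$ in the sense of \cite{G-vdP}: $\Omega$ is covered by translates of an affinoid $F$ whose complement consists of $2g$ disks. The elliptic element $s_i$ normalizes $W$ and permutes these translates, and its fixed points are limits of nested translates only if $s_i$ stabilizes an infinite nested sequence of disks $B_1\supset B_2\supset\cdots$. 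In that case $s_i$ conjugates the corresponding words of $W$, which are infinitely many, and we would obtain a relation in the free product. I expect this to be the one delicate point. An alternative route is to argue via Proposition~\ref{1.3}: the nonidentity element $s_i$ acts on $\overline{\Omega}$ and on the tree, and a limit point corresponds to an end of the tree. If $s_i$ fixed an end, it would fix a half-line, so infinitely many vertices would have nontrivial stabilizer. But vertex stabilizers of $\Gamma$ are finite, and by freeness of the product they are conjugates of some $\langle s_j\rangle$; a single $s_i$ then fixes only a bounded subtree, because each conjugate $\gamma s_j\gamma^{-1}$ with $\gamma \ne 1$ is a different element. This gives (1).

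For (2), $\Omega/\Gamma = (\Omega/W)/\langle\sigma\rangle$ is a smooth complete curve. The map $X\to\Omega/\Gamma$ is branched exactly at the images of points with nontrivial stabilizer in $\Gamma$. By the description above these are exactly the $\Gamma$-orbits of $S$, and each $s_i$-orbit gives one point. Two distinct points of $S$ are not $\Gamma$-equivalent, since the abelianization $\Gamma\to(\mathbb{Z}/2)^{g+1}$ separates the conjugacy classes of the $s_i$, and $a_i,b_i$ are not swapped by anything normalizing $\langle s_i\rangle$ in $\Gamma$. So there are $2g+2$ branch points. Riemann--Hurwitz gives $2g-2 = 2(2h-2)+2g+2$, so $h=0$. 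A genus-$0$ curve with a $K$-rational point (the image of $a_0$) is $\mathbb{P}^1_K$.

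Items (3) and (4) now follow. $X=\Omega/W$ is a Mumford curve of genus $g$ because $W$ is Schottky of rank $g$, and it is a double cover of $\mathbb{P}^1_K$, hence hyperelliptic (recall $g\ge 2$). The ramification and branch points are the images of $S$ computed in (2).

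The main obstacle is the step in (1) showing that fixed points of the $s_i$ are not limit points; I would carry it out via the action on the tree $\overline{\Omega}^d$ as sketched.
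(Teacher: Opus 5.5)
Your overall plan (prove (1) directly, then get (2) from Riemann--Hurwitz with $2g+2$ branch points) is a legitimate alternative to the paper. The paper obtains (2) from the explicit $\Gamma$-invariant theta function $\prod_{\gamma\in\Gamma}\frac{z-\gamma a}{z-\gamma b}$, which does not use (1). It then gets (1) by lifting the hyperelliptic involution at each of the $2g+2$ ramification points. Your route avoids theta functions but puts all the weight on (1). That step, which you rightly flag as the crux, is not proved.

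\textbf{Your first route.} It fails as stated. $s_i$ normalizes $W$, but it need not permute the $W$-translates of a good fundamental domain $F$: $s_iF$ is a fundamental domain for $W$, but in general not of the form $wF$. The promised relation in the free product is also never produced.

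\textbf{Your tree route.} The decisive sentence ``a single $s_i$ then fixes only a bounded subtree, because each conjugate $\gamma s_j\gamma^{-1}$ with $\gamma\neq 1$ is a different element'' does not follow. Finite vertex stabilizers and freeness of $\Gamma$ alone do not stop an involution from fixing an infinite ray all of whose vertices have stabilizer exactly $\langle s_i\rangle$. (Also, $\gamma=s_j$ gives the same conjugate.) The missing input is cocompactness. By Proposition~\ref{1.3}, $\overline{\Omega}/W$ is the stable reduction of the projective curve $X$, so $\overline{\Omega}^d/W$, and hence $\overline{\Omega}^d/\Gamma$, is a finite graph. With that, the argument closes:
\begin{itemize}
\item If $a_i\in\mathcal{L}$, then $s_i$ fixes a ray $v_0,v_1,\dots$ towards the end $a_i$.
\item By pigeonhole, $v_n=\gamma v_m$ for some $m<n$ and some $\gamma\in\Gamma$.
\item Then $s_i$ and $\gamma s_i\gamma^{-1}$ are both nontrivial elements of the finite group $\mathrm{Stab}_\Gamma(v_n)$. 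By Kurosh this group has order at most $2$, so the two elements are equal.
\item Hence $\gamma$ centralizes $s_i$, so $\gamma\in\langle s_i\rangle$ (the centralizer in the free product). But $\langle s_i\rangle$ fixes $v_m$, contradicting $v_n\neq v_m$.
\end{itemize}

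\textbf{A smaller point.} You assert $\mathrm{Stab}_\Gamma(a_i)=\langle s_i\rangle$ before knowing $a_i\in\Omega$, and the reason you give is not right. A hyperbolic element with fixed points $a_i,b_i$ commutes with $s_i$, yet generates an unbounded group with it. Discontinuity gives finite stabilizers only at ordinary points. Once (1) is established the claim is immediate: the stabilizer of $a_i\in\Omega$ is finite, hence of order at most $2$, and it contains $s_i$. With these repairs the rest goes through: the inequivalence of the $2g+2$ points of $S$ via the abelianization and the centralizer, Riemann--Hurwitz (tame, since the residue characteristic is not $2$), and items (3) and (4).
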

\begin{proof} 
Proof of (1).  Let $x\in \Omega/W$ be a ramification point. Choose a point $y\in \Omega$ with image $x$. Let $\sigma$ denote the hyperelliptic involution of $\Omega/W$. Since $\Omega \rightarrow \Omega/W$ is the universal covering, the map $\sigma$ lifts uniquely to an automorphism
 $\tau$ of $\Omega$ such that $\tau (y)=y$. Now $\tau^2$ is the unique lift of $\sigma^2=id$ satisfying $\tau^2(y)=y$. Thus $\tau^2=id$. Since $\Gamma$ is the free product $<s_0>*\cdots *<s_g>$, 
 $\tau$ is a conjugate $\gamma s_i\gamma ^{-1}$ for some $i$ and some $\gamma \in \Gamma$.   Therefore $\gamma^{-1}y$ is equal to $a_i$ or $b_i$ and one of these points belongs to $\Omega$.  Applying this method to all $2g+2$ ramification points in $\Omega/W$ one obtains that $S\subset \Omega$.\\

\noindent  Proof of  (2). 
 Using cohomology of some rigid sheaves, one can show that $\Omega/\Gamma$ has genus zero.
The points $S\subset \Omega \cap K$ produce $K$-rational points in $\Omega/\Gamma$ and therefore
$\Omega/\Gamma \cong \mathbb{P}^1_K$.  We prefer a more explicit proof.\\

 We will produce a meromorphic function $F(z)$ on $\Omega$ which is $\Gamma$-invariant and has modulo the action of $\Gamma$ one simple zero and one simple pole on $\Omega$. Moreover, we require that $F(z)$
is defined over $K$ and, as a consequence, maps $\Omega \cap \mathbb{P}^1(K)$ to $\mathbb{P}^1(K)$.
Then $F(z)$ induces a meromorphic function $\tilde{F}\colon \Omega/\Gamma \rightarrow \mathbb{P}^1_K$ with one simple zero and one simple pole. Thus $\tilde{F}$ is an isomorphism (defined over $K$).\\

 For points $a,b\in \Omega \cap K$, such that $\Gamma$-orbits of $a,b,a_0,b_0,\dots a_g,b_g$ and $\infty$ are all disjoint, 
 one considers the {\it theta function} $F(z)=\prod _{\gamma \in \Gamma} \frac{z-\gamma a}{z-\gamma b}$ for the group $\Gamma$. This product converges since, for any $\epsilon >0$, the number of $\gamma \in \Gamma$ with $|\gamma (a)-\gamma(b)|\geq \epsilon $ is finite. (see \cite{G-vdP} or \cite{P}). Moreover  there is, for every $\delta \in \Gamma$, a constant $c(\delta)\in K^*$ with $F(\delta z)c(\delta)=F(z)$. Clearly  $\delta \mapsto c(\delta)$ is a homomorphism and therefore
 $c(s_0),\dots ,c(s_g)\in \{\pm 1\}$. 
 
 In general, the ``automorphy factor'' $c(\delta)$ depends analytically on $a,b$. For $a$ and $b$ close enough, one has
 $|c(\delta)-1|<1$. Indeed, for fixed $a\in \Omega$ and fixed $\delta \in \Gamma$, the factor $c(\delta)$ is an analytic function of $b$. This function has value 1 for $b=a$ and thus the inequality holds for $b$ close to $a$.\\

 For $\delta =s_i$, this implies that $c(s_i)=1$ holds  for $(a,b)$ in a connected component 
 $\Omega^* \times \Omega^*$, where $\Omega^*$ is the complement in $\Omega$ of the orbits of the points $a_0,b_0,\dots ,a_g,b_g, \infty$. Since this rigid space is connected, one has $c(s_i)=1$ for any choice
 $(a,b)\in \Omega^* \times \Omega^* \setminus \{\mbox{diagonal}\}$. This shows the $\Gamma$-invariance of $F(z)$. 
  The orbits $\Gamma a$ and $\Gamma b$ are the simple zero's and the simple poles of the theta function $F(z)$.
  Finally, $F(z)$ is defined over $K$ since $a,b \in K$ and $\Gamma \subset PGL(2,K)$.\\
 
 \noindent  Statements (3) and (4)  follow easily from (2).        \end{proof}

\begin{theorem} \label{1.6} Let $X/K$ be a hyperelliptic curve with set of branch points $Branch_X\subset \mathbb{P}^1(K)$.
For each irreducible component $L$ of the reduction  $\overline{(\mathbb{P}^1_K, Branch_X)}$ one considers the map
\[\phi_L\colon Branch_X\rightarrow  \overline{(\mathbb{P}^1_K, Branch_X)} \overset{projection}{\rightarrow} L\]
with image $\{p_{1},\dots p_{m}\}$ ($m\geq 3$ and the points $\{p_i\}$  are depending on $L$).

Then $X$ is a potential Mumford curve if and only if for every $L$, at most two of the numbers
$m_i:=\# \phi_L^{-1}(p_{i})$ are odd.  \end{theorem}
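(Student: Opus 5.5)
The plan is to build, after a finite extension of $K$, the semi-stable reduction of $X$ as the double cover of $\overline{(\mathbb{P}^1_K,Branch_X)}$ determined by the branch points, and then to apply Theorem~\ref{1.4}. Put $B=Branch_X$ and write $X$ as $y^2=f(x)$, where the roots of $f$ (together with $\infty$ if $\#B$ is odd) are the points of $B$. Fix a component $L$ of $\overline{(\mathbb{P}^1_K,B)}$. After a Möbius transformation over $K$ (or over a finite extension), $L$ is the reduction of the closed unit disc and $\phi_L$ becomes the ordinary reduction map $z\mapsto \bar z$. Adjoining to $K$ a suitable square root, we may normalise $f$ to have spectral norm $1$ on the affinoid $Red^{-1}(L\setminus\{\text{nodes}\})$. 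Its reduction is then
\[\bar f(\bar x)=\bar c\prod_i(\bar x-p_i)^{m_i},\]
where one factor is omitted if some $p_i=\infty$. Since the residue characteristic is not $2$, the reduction of $X$ over $L$ is the curve $\bar y^2=\bar f$. After normalisation this is the double cover of $L\cong\mathbb{P}^1_k$ branched exactly at those $p_i$ with $m_i$ odd; the point $\infty$ is handled by the usual parity bookkeeping, because $\sum_i m_i=\#B$ is even once $\infty$ is counted. Let $r_L$ be the number of odd $m_i$. By Riemann--Hurwitz the component above $L$ is either two copies of $\mathbb{P}^1_k$ (when $r_L=0$), one $\mathbb{P}^1_k$ (when $r_L=2$), or a hyperelliptic curve of genus $(r_L-2)/2\ge 1$ (when $r_L\ge 4$).

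Next, glue these pieces along the nodes. Over each node $d$ of $\overline{(\mathbb{P}^1,B)}$ the formal local ring is $K^o[[u,v]]/(uv-a)$. Whether $f$ is, locally, a unit times a square or a unit times $u$ is governed by the parity of the number of points of $B$ on one side of $d$. In the first case $d$ has two preimages and the local ring of each is of the same type. In the second case $d$ has one preimage with equation $uv=\sqrt a$ after adjoining $\sqrt a$. Either way the preimages are ordinary double points, so over a finite extension $K'$ the double cover of $\overline{(\mathbb{P}^1,B)}$ is a semi-stable reduction of $X\times_K K'$. This is the explicit semi-stable reduction promised in the remark after Theorem~\ref{1.1}, and it separates the ramification points.

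Both directions then follow. If every $L$ has $r_L\le 2$, every component of this semi-stable reduction has a normalisation isomorphic to $\mathbb{P}^1_k$, and Theorem~\ref{1.4} over $K'$ shows that $X\times_K K'$ is a Mumford curve, so $X$ is a potential Mumford curve. Conversely, suppose some $L$ has $r_L\ge 4$. Then the reduction over every finite extension contains a component of positive genus $(r_L-2)/2$. Semi-stable reductions are obtained from the stable one by blow-ups, which only add rational components, and they are compatible with field extension. Hence the stable reduction of $X\times_K K'$ has a component of positive genus for every $K'$, and by Proposition~\ref{1.3} (the ``only if'' direction of Theorem~\ref{1.4}) $X\times_K K'$ is never a Mumford curve.

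I expect the main obstacle to be the local analysis at the nodes. One has to check carefully that the double cover of an annulus $|a|<|u|<1$ is again an annulus, or a disjoint union of two annuli, so that the total space really is semi-stable. One also needs to track which square roots must be adjoined to $K$. Both points come down to the fact that $f/u^{e}$ is a unit with a square root, by Hensel's lemma and the assumption that the residue characteristic is not $2$, where $e\in\{0,1\}$ is the parity just described.
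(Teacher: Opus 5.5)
Your proposal is correct and follows the paper's own argument. The paper likewise takes the semi-stable reduction of $X$ obtained by pulling back $\overline{(\mathbb{P}^1_K,Branch_X)}$; your local analysis over components and nodes is exactly the computation of Section~\ref{Section3}. It then observes that each component over $L$ is a double cover of $L$ branched at the $p_i$ with $m_i$ odd, and concludes with Theorem~\ref{1.4}. Your stable-reduction argument for the converse only spells out what the paper leaves implicit in citing Theorem~\ref{1.4}. The one slip is ``together with $\infty$ if $\#B$ is odd'', which should read ``if $\deg f$ is odd'', since $\#B=2g+2$ is always even.
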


\begin{proof}  By Theorem \ref{1.4}, the curve $X$ is a potential Mumford curve if and only if the normalisation of every 
irreducible component of a semi-stable reduction has genus zero. Here, we consider the semi-stable reduction of $X$ induced by
the reduction $ \overline{(\mathbb{P}^1_K, Branch_X)}$. The irreducible components $M$ of this reduction are nonsingular curves which
meet at ordinary double points. See Section~\ref{Section3} for a detailed computation. Moreover each $M$ is a degree two covering of  
some $L$ (as above) ramified in $\sum _i (m_i\! \! \mod \! (2))[p_i]$. The genus of $M$ is zero if and only if there are at most two odd numbers $m_i$.\end{proof}
 
The rather complicated proof of the following theorem is a variation on a chapter of G.~Van Steen's thesis \cite{S}, see also \cite{S2}.  

\begin{theorem}\label{1.7}
Let $X/K$ be a hyperelliptic curve and $Ram_X$ its set of Weierstrass points. Suppose that 
$Ram_X\subset X(K)$, that the semi-stable reduction $\overline{(X,Ram_X)}$ exists over $K$ and that 
the normalization of every irreducible component of this reduction is a projective line over $k$. 

Then $X/K$ is a Whittaker curve.\end{theorem}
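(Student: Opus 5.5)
By Theorem~\ref{1.4} the hypotheses already make $X$ a Mumford curve. So there is a Schottky group $W\subset PGL_2(K)$ with $\Omega/W\cong X$, where $\Omega=\mathbb{P}^1_K\setminus\mathcal{L}$. The task is to enlarge $W$ to a group $\Gamma=\langle s_0\rangle*\cdots*\langle s_g\rangle$ generated by involutions with $K$-rational fixed points. I would do this by lifting the hyperelliptic involution $\sigma$ to $\Omega$, mirroring the argument in Theorem~\ref{1.5}(1) but in reverse.

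\emph{Step 1: lifting $\sigma$.} Since $\Omega\to X$ is the universal covering for the rigid topology, every lift of $\sigma$ is an automorphism of $\Omega$. The uniqueness part of the proof of Theorem~\ref{1.4} (any embedding $\Omega\to\mathbb{P}^1$ differs from the given one by $PGL_2(K)$) shows that each lift is given by an element of $PGL_2(K)$. Let $\Gamma\subset PGL_2(K)$ be the group of all lifts. Then $W$ is normal of index $2$ in $\Gamma$, $\Gamma$ is discontinuous with the same limit set $\mathcal{L}$, and $\Omega/\Gamma=X/\langle\sigma\rangle\cong\mathbb{P}^1_K$. For each Weierstrass point $x\in Ram_X\subset X(K)$, fix a preimage $y\in\Omega\cap\mathbb{P}^1(K)$. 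The lift $\tau$ fixing $y$ satisfies $\tau^2=1$, so $\tau$ is an involution in $PGL_2(K)$ with one fixed point $y\in K$. Hence its second fixed point is also $K$-rational.

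\emph{Step 2: the free product structure.} This is the main obstacle, and it is where the reduction $\overline{(X,Ram_X)}$ enters. I would let $\Gamma$ act on the tree $\overline{(\Omega,S')}^d$, the dual graph of the minimal reduction separating $S'$, the preimage of $Ram_X$ (case (d) of \S\ref{subsec2.2}). This tree is the universal cover of $\overline{(X,Ram_X)}^d$. Because all components are projective lines over $k$ and the residue characteristic is $\neq2$, the quotient of the degree-two cover $\overline{(X,Ram_X)}\to\overline{(\mathbb{P}^1,\mathbb{B})}$ is a tree of projective lines. Here one uses the explicit description of this map (Theorem~\ref{1.6} and Section~\ref{Section3}): each component $M$ maps to a component $L$ of $\overline{(\mathbb{P}^1,\mathbb{B})}$, either isomorphically onto a conjugate pair or as a double cover branched at two points.

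One then applies Bass--Serre theory to $\Gamma$ acting on the tree. The quotient graph $\overline{(\mathbb{P}^1,\mathbb{B})}^d$ is a tree. Every nontrivial vertex or edge stabilizer is finite, since stabilizers are bounded and $\Gamma$ is discontinuous. Such a stabilizer has order $2$ and is generated by an involution fixing some point of $S'$ or a node. After the separation of $S'$, the involutions fixing nodes must be excluded; I expect this to be the delicate point, to be settled through the local analysis of the double cover at nodes (the two ramification points on each $M$ are images of $S'$ or of nodes). Once this is done, $\Gamma$ is the fundamental group of a tree of groups whose vertex groups are $\{1\}$ or $\mathbb{Z}/2$ and whose edge groups are trivial. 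It follows that $\Gamma$ is a free product of $g+1$ copies of $\mathbb{Z}/2$, generated by involutions $s_i$ whose fixed pairs $\{a_i,b_i\}$ lie in $K$ and are in good position. This gives $2g+2$ Weierstrass points, hence $g+1$ factors.

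\emph{Step 3: conclusion.} Finally, $W$ is the kernel of $\Gamma\to\Gamma/W=\{\pm1\}$, where each $s_i\mapsto-1$ because no involution lies in the torsion-free group $W$. Therefore $W=\langle s_is_0\rangle$, and $X=\Omega/W$ is a Whittaker curve.
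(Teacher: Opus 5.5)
Your route differs from the paper's. The paper applies the Dyer--Scott normal form to the involution $w\mapsto sws^{-1}$ of the free group $W$, and then uses $\mathrm{rank}(\mathbb{Q}\otimes\Gamma_{ab})=\mathrm{genus}(\Omega/\Gamma)=0$ to conclude that $\Gamma$ is a free product of copies of $\mathbb{Z}/2$. Your Bass--Serre argument on $T=\overline{(\Omega,S')}^d$ is closer to Lemma~\ref{new4.2} and Theorem~\ref{2.2}. It can be made to work, but the step you flag as delicate is false as stated: involutions fixing nodes cannot be excluded, and the edge groups are not all trivial.

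If an edge of $\overline{(\mathbb{P}^1,\mathbb{B})}^d$ is odd, the node of $\overline{X}$ above it is ramified. It is therefore fixed by $\sigma$ without its two branches being exchanged, and the lift of $\sigma$ fixing a preimage of that node fixes the corresponding edge of $T$. For instance, in Case (c) of \S\ref{5.3.3}, the involution $s_0$ has its fixed points $x_0,y_0$ on lifts of $\tilde{L}_0$ and $\tilde{M}_0$ and fixes the node between them. That edge has stabilizer $\langle s_0\rangle$; compare $G_e=\{\pm1\}$ for odd edges in Section~\ref{Section7}. For the same reason your count ``one $\mathbb{Z}/2$ per vertex, $2g+2$ Weierstrass points, hence $g+1$ factors'' fails there: one factor stabilizes two vertices, each carrying only one Weierstrass point.

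What separation actually gives is that $\Gamma$ acts without inversions. An element exchanging the two components through a node exchanges the two discs complementary to the corresponding open annulus, so its fixed points lie in that annulus. They would then be points of $S'$ reducing to a node, which separation forbids.

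The repair is then easy. Since $W$ is torsion free of index $2$, every finite subgroup of $\Gamma$ has order $\le 2$, so a nontrivial edge group equals both adjacent vertex groups. Collapsing these edges in the tree of groups over $T/\Gamma$ leaves trivial edge groups and vertex groups of order $\le 2$. Thus $\Gamma\cong\langle s_0\rangle*\cdots*\langle s_n\rangle$. Moreover $n=g$, because the kernel of $\Gamma\rightarrow\Gamma/W$ is free on $s_0s_1,\dots,s_0s_n$ and $W$ is free of rank $g$.

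Finally, these $s_i$ are vertex-stabilizer generators, not the lifts from your Step~1, so their rationality needs one more line. A fixed point of $s_i$ lies either in $\Omega$, hence in $S'\subset\mathbb{P}^1(K)$, or in $\mathcal{L}\subset\mathbb{P}^1(K)$. An involution in $PGL_2(K)$ with one rational fixed point has both fixed points rational.
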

\begin{proof} $X/K$ is a Mumford curve according to Theorem \ref{1.4}.

 Write $\phi\colon \Omega \rightarrow X=\Omega /W$ for the uniformization, where $W\subset PGL_2(K)$ is a Schottky group of rank $g\geq 2$.
  Let $\sigma$ denote the hyperelliptic involution of $X$. Let 
 $q\in \Omega$ map to $p=\phi (q)$ which is a Weierstrass point (i.e., fixed point of $\sigma$). Then $\sigma$ lifts to an automorphism $s=s_0$ of $\Omega$ with $s(q)=q$. Then $s^2=1$ since $s^2$  is a lifting of $\sigma^2=id$ and $s^2(q)=q$.
The fixed points of $s$ are in $\Omega \cap \mathbb{P}^1(K)$ and $s\in PGL_2(K)$.
Write $\Gamma =<W,s>\subset PGL_2(K)$. 
 
  Now $W\subset \Gamma$ is a subgroup of index 2. We want to produce a free basis
 $\{ w_1,\dots ,w_g\}$ of $W$ such that $sw_is^{-1}=w_i^{-1}$ for all $i$. Indeed, this will imply that
 $\Gamma$ is freely generated by the elements $s_0=s, s_1=sw_1,\dots ,s_g=sw_g$ of order two.  \\
 
 Conjugation by $s$ induces an automorphism $\alpha$ of order 2 of $W$. According to \cite{D-S}, $W$ has a free basis
 $\{ a_i \}_{i\in I} \cup \{b_j, c_j\}_{j\in J} \cup \{x_\lambda, y_j\}_{\lambda \in \Lambda, j\in J_\lambda}$ such that
 \[\alpha (a_i)=a_i,\ \alpha (b_j)=c_j,\ \alpha (c_j)=b_j,\ \alpha(x_\lambda) =x_\lambda^{-1},\ 
 \alpha (y_j )=x_\lambda^{-1}y_{j}x_\lambda .\] 
 We have to show that for $W$ only the elements $x_\lambda$ are present. The rank $r$ of $\Gamma$ is defined as
 the dimension of the vector space $\mathbb{Q}\otimes \Gamma_{ab}$.  A general result for finitely generated discontinuous
 groups $\Delta$ is: the rank of $\Delta$ is the genus of $\Omega/\Delta$. In our case $r=0$. Finally, a computation shows
 that $r=0$ implies that only the elements $x_\lambda$ are present in the above presentation of $W$.\\
 
 Finally, we want to show that each of the $g+1$ generators $s_i, i=0,\dots , g$ (of order two) of $\Gamma$ has its fixed points
 in $\mathbb{P}^1(K)$. For every Weierstrass point $\tilde{q}\in X(K)$, one takes a preimage 
 $\tilde{p}\in \Omega  \cap \mathbb{P}^1(K)$ and considers the lift $\tilde{s}$ of $\sigma$ satisfying 
 $\tilde{s}(\tilde{p})=\tilde{p}$. Then $\tilde{s}$ has order two and its fixed points are in $\Omega \cap \mathbb{P}^1(K)$.
 Further,  $\tilde{s}\in \Gamma$ is conjugated to some $s_i$. Since $\Gamma \subset PGL_2(K)$, the fixed points of
 this $s_i$ are in $\mathbb{P}^1(K)$. By varying the Weierstrass point $\tilde{q}$, one obtains all $s_j$ have this property.
\end{proof}

\begin{theorem}\label{1.8}  With the above notations. \\
 {\rm (1)} The set $\Gamma (S)=\{ \gamma (s)|\  \gamma \in \Gamma,\ s\in S \}\subset \Omega \cap K$ is invariant under $\Gamma$ and its set of limit points is $\mathcal{L}$. The group $\Gamma$ acts on the reduction 
 $\overline{(\Omega, \Gamma (S))}$. \\
 {\rm (2)} The quotient  $\overline{X}:=\overline{(\Omega, \Gamma (S))}/W$ is the minimal 
 semi-stable reduction of $X=\Omega/W$ which separates the ramification points $Ram_X\subset X(K)$.\\
 {\rm (3)}.  The hyperelliptic involution $\sigma$ of $X$ acts on the reduction $\overline{X}$. 
 The quotient $\overline{X}/<\sigma > $ identifies with the reduction  $\overline{(\mathbb{P}^1_K,Branch_X)}$.  
 Therefore, the quotient  $\overline{(\Omega, \Gamma (S))}/\Gamma$ also coincides with
  $\overline{(\mathbb{P}^1_K,Branch_X)}$.\\
 \end{theorem}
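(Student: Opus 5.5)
For (1), I will first note that $S\subset\Omega\cap K$ by Theorem~\ref{1.5}(1), and that $\Omega$ is $\Gamma$-stable. Hence $\Gamma(S)\subset\Omega\cap K$, and it is obviously $\Gamma$-invariant.

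Next I will show that the limit points of $\Gamma(S)$ are exactly $\mathcal{L}$.
\begin{itemize}
\item Since $\Gamma$ is discontinuous, every orbit $\Gamma s$ with $s\in\Omega$ is discrete in $\Omega$ and accumulates only at $\mathcal{L}$. So the points of $\Gamma(S)$ are isolated in its closure.
\item Conversely, the standard description of limit points for a discontinuous group gives that every $\ell\in\mathcal{L}$ is a limit of $\gamma_n(s)$ for any fixed $s\in\Omega$. Thus $\overline{\Gamma(S)}\setminus\Gamma(S)=\mathcal{L}$.
\end{itemize}
Because $\mathcal{L}$ is compact and perfect, example (d) of \S\ref{subsec2.2} applies and yields the unique minimal reduction $\overline{(\Omega,\Gamma(S))}$. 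By uniqueness and $\Gamma$-invariance of the data, each $\gamma\in\Gamma$ transports this reduction to a minimal reduction separating $\Gamma(S)$, hence to itself. So $\Gamma$ acts on it.

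For (2), I will argue as in Proposition~\ref{1.3}. $W$ is a Schottky group, so a nontrivial $w\in W$ stabilizes no node or component, since the stabilizers are bounded subgroups and $W$ is torsion free. The quotient $\overline{(\Omega,\Gamma(S))}/W$ is then formed as before: pass to a finite-index normal subgroup acting with $\gamma L\cap L=\emptyset$, then take a finite quotient. The result is a semi-stable reduction of $X$ that separates the images of $\Gamma(S)$, and these images are $Ram_X$ by Theorem~\ref{1.5}(4).

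Minimality is the main point. A component of the quotient with at most two special points would lift to a component of $\overline{(\Omega,\Gamma(S))}$ with at most two special points. The map is étale near components, and preimages of special points and nodes are special. This contradicts minimality in (d). Uniqueness of the minimal separating reduction (Corollary~\ref{1.2}, which applies since $g\geq 2$) then identifies the quotient with $\overline{(X,Ram_X)}$.

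For (3), the involution $\sigma$ preserves $Ram_X$, so by uniqueness of the minimal reduction it acts on $\overline{X}$. Equivalently, $\Gamma$ acts on $\overline{(\Omega,\Gamma(S))}$ and $\Gamma/W=\langle\sigma\rangle$. The quotient $\overline{X}/\langle\sigma\rangle$ is a reduction of $\mathbb{P}^1_K=X/\langle\sigma\rangle$, and it is a tree of projective lines that separates $Branch_X$.

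The obstacle I expect is showing that this quotient is semi-stable and minimal at points fixed by $\sigma$. Here $\sigma$ may fix a node or stabilize a component, possibly swapping branches at a node. I plan to handle this with the local computation promised in Section~\ref{Section3}, which uses the residue characteristic $\neq 2$:
\begin{itemize}
\item A node with local ring $K^o[[x,y]]/(xy-a)$ and involution $x\mapsto -x,\ y\mapsto -y$ has quotient $uv=a^2$ with $u=x^2,v=y^2$, which is again a node.
\item If the involution swaps the branches, the component's preimage is a pair of lines interchanged by $\sigma$, so the node maps into the interior of a single line.
\item A component stable under $\sigma$ maps with degree $2$ onto $\mathbb{P}^1_k$.
\end{itemize}
Counting special points on each image line, I will check that every line downstairs carries at least three. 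Uniqueness in example (b) then gives $\overline{X}/\langle\sigma\rangle=\overline{(\mathbb{P}^1_K,Branch_X)}$. The last assertion follows from $\overline{(\Omega,\Gamma(S))}/\Gamma=(\overline{(\Omega,\Gamma(S))}/W)/(\Gamma/W)$.
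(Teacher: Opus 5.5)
Your proposal is correct and takes the paper's route. You form the $W$-quotient as a local isomorphism, so minimality lifts from $\overline{(\Omega,\Gamma(S))}$; you then compare the $\sigma$-quotient with the unique minimal $\overline{(\mathbb{P}^1_K,Branch_X)}$ by ruling out lines with at most two special points, and your local analysis at $\sigma$-fixed nodes fills in what the paper only asserts. One caveat: the branch-swapping case you list cannot occur, since $\sigma$ would then have fixed points (Weierstrass points) in the annulus $Red^{-1}(p)$, contradicting that $\overline{X}$ separates $Ram_X$. You need this, because otherwise the image of that node would be a non-special point in your count of special points.
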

 \begin{proof} (1) is obvious. $\overline{X}$ is semi-stable since $\overline{(\Omega,\Gamma(S))}$ and its quotient $\overline{X}$ are locally isomorphic.  The minimality of $\overline{X}$ with respect to separating $Ram_X$ follows from
 the properties:   $\Gamma (S)$ is the preimage of $Ram_X$ and $\overline{(\Omega , \Gamma(S)) }$
 is semi-stable and minimal with respect to separating $\Gamma(S)$.  This proves (2).\\
 
   The quotient  $\overline{X}/ <\sigma >$ is a semi-stable reduction of  $X / <\sigma >=\mathbb{P}^1_K$ and separates $Branch_X$, which is the image of $Ram_X$. Therefore the quotient  $\overline{X}/ <\sigma >$ maps to $\overline{(\mathbb{P}^1_K,Branch_X)}$.
This morphism is an isomorphism or a ``blow up''.  A ``blow up'' would produce an irreducible component $L\cong \mathbb{P}^1_k$
having at most two special points. Here `special point' means a node or the image of a point of $Ram_X$. Such a line $L$ does  
not exist since $\overline{X}$ does not have a similar line.\end{proof}

 \begin{comments} \label{newpairs}
 (1). Suppose that the hyperelliptic curve $X$ is a potential Whittaker curve. Theorems \ref{1.6} and \ref{1.7} imply
  that the set of ramification points has {\it  a natural division into pairs}.  These pairs are the images of the pairs of fixed points.\\ 
  
 \noindent (2).  Let the hyperelliptic $X$ over $K$ with $Branch_X\subset \mathbb{P}^1(K)$ be a potential Whittaker curve.  We note that
 $X$ is, in general,  not yet a Whittaker curve. In particular, the fixed points corresponding to the Whittaker group associated to
 $X$ are, in general, not defined over $K$. The Theorems \ref{1.7} and \ref{1.8} imply the more precise statement:\\
 
{\it  Let $K'\supset K$ be a finite separable extension. Then $X\times_KK'$ is a Whittaker curve if and only if
 the minimal semi-stable reduction for $X\times _KK'$ which separates $Branch_X$,  exists over $K'$.\\ }
\end{comments}

\section{\rm A semi-stable reduction of a hyperelliptic curve}\label{Section3}

\begin{proposition} \label{2.1}
 Let $\phi \colon X\rightarrow \mathbb{P}^1_K$ be a hyperelliptic curve with branch locus 
 $\mathbb{B}\subset \mathbb{P}^1(K)$. After replacing $K$ by a finite extension (if needed), the
 ``pull back''  of the reduction  $\overline{(\mathbb{P}^1_K,\mathbb{B})}$ under the map $\phi$ 
 is the unique minimal semi-stable reduction $\overline{(X,Ram)}$ of $X$ which separates the set of ramification points
 $Ram=\phi^{-1}\mathbb{B}$.
 Moreover, the irreducible components of $\overline{(X,Ram)}$ are nonsingular.
\end{proposition}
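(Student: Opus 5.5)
The plan is to pull back the pure affinoid covering of $\mathbb{P}^1_K$ defining $\overline{(\mathbb{P}^1_K,\mathbb{B})}$ along $\phi$ and compute the reduction of each preimage explicitly. Write $X$ as $y^2=c\prod_{b\in\mathbb{B}}(x-b)$, omitting the factor for $b=\infty$. After a M\"obius change of coordinate we may assume $\infty\notin\mathbb{B}$ and, enlarging $K$, that $c$ is a square.

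\textbf{Components.} An irreducible component $L$ of $\overline{(\mathbb{P}^1_K,\mathbb{B})}$ corresponds to a closed disk $D=\{|x-\alpha|\le \rho\}$ with $\rho\in|K^*|$, together with finitely many residue classes removed. Let $A_L$ be the affinoid on which $Red$ lands in $L$ minus the nodes. Put $t=(x-\alpha)/\beta$ with $|\beta|=\rho$, so that $\bar t$ is a coordinate on $L$. For each $b\in\mathbb{B}$ the factor $x-b$ equals, up to a unit of norm one, either a constant times $(t-t_b)$ with $|t_b|\le1$, or a constant times a function reducing to a nonzero constant. In the first case $\bar t_b=p_i$ is the image of $b$ on $L$, possibly a node. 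Hence
\[ y^2=C\cdot u\cdot \prod_i (t-t_{b,i})\ \text{over the relevant } b,\]
with $\|u-1\|<1$ on $A_L$. Since the residue characteristic is not $2$, $u$ has a square root in $1+O(A_L)^{oo}$. After adjoining $\sqrt C$ (a finite extension) and rescaling $y$, the reduction of $\phi^{-1}(A_L)$ is the affine curve $\bar y^2=\prod_i (\bar t-p_i)^{m_i}$. Its normalization is $\bar y'^2=\prod_{m_i \text{ odd}}(\bar t-p_i)$. To see that the reduction is already normal, one checks that $\phi^{-1}(A_L)$ has the affinoid algebra $O(A_L)[y']$, where $y'=y/\prod(\text{even part})$ reduces correctly; the factor $\prod_{b\mapsto p_i}(t-t_b)$ with $m_i$ even is a square up to a unit only away from $p_i$. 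This is where care is needed, and it is handled by noting that $A_L$ excludes the residue classes of the $p_i$ that are nodes. A point $p_i$ that is not a node is the image of a single $b$, so $m_i=1$. Consequently, on $A_L$ itself the reduction is a smooth double cover of $L$ minus the nodes, branched exactly at the images of $\mathbb{B}$. It is irreducible if some odd $m_i$ occurs, and it splits into two copies of $L$ otherwise.

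\textbf{Nodes.} A node $d$ of $\overline{(\mathbb{P}^1_K,\mathbb{B})}$ has preimage an open annulus $\{|\pi_d|<|s|<1\}$. On this annulus $\prod(x-b)$ equals $C'\,s^{e}\,u$ with $u$ a unit close to $1$, where $e$ is the number of points of $\mathbb{B}$ on one side. If $e$ is even, $\phi^{-1}$ is two disjoint annuli, giving two nodes $xy=\pi_d$. If $e$ is odd, $\phi^{-1}$ is one annulus $\{|\pi_d|^{1/2}<|r|<1\}$ with $r^2=s$, requiring $\sqrt{\pi_d}$ (finite extension), and gives one node $xy=\sqrt{\pi_d}$. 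In all cases we obtain ordinary double points, and these glue with the component pieces into a pure covering. So $\phi^*$ of the reduction is semi-stable, every component is the smooth double cover described above, hence nonsingular, and the points of $Ram$ are separated because their images on the base are separated and the preimage of each point of $\mathbb{B}$ is a single point.

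\textbf{Minimality and uniqueness.} A component $M$ over $L$ that is isomorphic to $\mathbb{P}^1_k$ with at most two special points would force $L$ to have at most two special points, or, in the split case, would force the special points of $L$ to be at most two. Both contradict the minimality of $\overline{(\mathbb{P}^1_K,\mathbb{B})}$ (each $L$ has $\ge3$ special points). Uniqueness then follows from Corollary~\ref{1.2}, since $g\ge2$. Alternatively one can argue as in Theorem~\ref{1.8}(3): the minimal reduction is $\sigma$-invariant, its quotient is $\overline{(\mathbb{P}^1_K,\mathbb{B})}$, and therefore it is the pull-back.

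The main obstacle is the square-root and unit bookkeeping on each $A_L$, that is, verifying that the pulled-back affinoid has the claimed reduction and is normal. Residue characteristic $\ne2$ is used exactly there.
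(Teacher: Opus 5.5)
Your plan is the paper's: pull back the pure covering $\{U(v)\}\cup\{U(e)\}$, strip even powers of units and square roots of $1+\epsilon$ on the vertex pieces (this is where residue characteristic $\neq 2$ enters), and enlarge $K$ so that the relevant absolute values become squares. Your vertex computation agrees with the paper's formulas for $X(v)$, and your minimality check is correct and more explicit than the paper's.

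\textbf{The gap is at the nodes}, which is where the paper does its real work. An open annulus is not affinoid and has no reduction of its own. So knowing that $\phi^{-1}$ of the formal fibre of a node is one or two open annuli neither shows that the pulled-back covering is pure nor identifies the point above the node as an ordinary double point. Your sentence ``these glue with the component pieces into a pure covering'' is exactly what has to be proved. One needs an affinoid neighbourhood $U(e)=Red^{-1}(p^*)$ of the node and the canonical reduction of $\phi^{-1}U(e)$. In the odd case this is not the obvious model. For $y^2=zw$ on $|a|\le|z|\le 1$, the algebra $O(U(e))^o[y]/(y^2-zw)$ reduces to $R[\bar y]/(\bar y^2-X_1\bar w)$ with $R=k[X_1,X_2,\ldots]/(X_1X_2)$. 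In that ring $X_2\bar y$ is a nonzero nilpotent: it is the image of $\frac{a}{z}y$, whose spectral norm is $|a|^{1/2}<1$. One must adjoin $Y_2=\alpha/y$ with $|\alpha|^2=|a|$. With the relations $Y_1^2=zw$, $Y_2^2=\frac{\alpha^2}{a}\cdot\frac{a}{z}\cdot w^{-1}$, $Y_1Y_2=\alpha$, the reduction is reduced, hence canonical, and it shows two smooth branches through a single ordinary double point. Alternatively you could invoke the general fact that a point of a formal model with reduced special fibre whose formal fibre is an open annulus is an ordinary double point, but you would still have to construct that model. So the step you call the main obstacle is the easy one, and the one you settle in a sentence is the substantive one.

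\textbf{Two smaller points.} First, for $e$ even the preimage of the annulus is $y'^2=C'(1+\epsilon)$, which splits into two annuli only if $C'$ is a square. Making $C_L$ a square does not achieve this. In Case (b) of Section 5, with $\tilde L_0$ given by $u^2=z$, the nodes over $L_0\cap L_i$ sit at $u=\pm\sqrt{t_i}$, in general not in $k$. So you need further unramified quadratic extensions (permitted by the statement, and recorded in the remark after it). Alternatively you must allow, as the paper does in the subcase $m=0$, a pair of nodes conjugate over a quadratic extension of $k$.

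Second, a single coordinate with $\infty\notin\mathbb{B}$ does not put every $A_L$ inside $|t|\le 1$. If $\infty$ reduces to a non-nodal point of some $L$, then $A_L$ contains $|t|>1$, and your description of each $x-b$ as a unit times $t-t_b$ fails there. As in the paper, choose for each $L$ a coordinate with a node at $\bar t=\infty$.
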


 \begin{remarks}\label{Rem3.2} A field extension $\tilde{K}\supset K$ is sufficient for the existence of  $\overline{(X,Ram)}$ if some 
(computable) elements of the value group $|K^*|$ are squares in the value group $|\tilde{K}^*|$ and moreover
the residue field of $\tilde{K}$ contains square roots of certain (computable) elements in $k^*$.
 
More precisely, for normalizing the affine equation of $X$ and for each {\it ramified } double point of $\overline{(X,Ram)}$, one needs, 
in general, a degree two extension of the value group $|K^*|$. 
Each {\it unramified} double point of $\overline{(X,Ram)}$ is in general defined over a
quadratic extension of the residue field $k$.  
Moreover, the residue field of $\tilde{K}$ should contain these quadratic extensions. \hfill $\square$\\
 
 \end{remarks}

\noindent {\bf A proof of \ref{2.1} by a computation refining }\cite[5.5.4]{F-vdP}.\\
Let  $\phi\colon X\rightarrow \mathbb{P}^1_K$ denote a hyperelliptic curve of genus $g\geq 2$ with its canonical morphism $\phi$ to $\mathbb{P}^1_K$. Let $\mathbb{B}=\{e_1,\dots , e_{2g+2}\}\subset K$ be its branch locus  and 
 $y^2=c(x-e_1)\cdots (x-e_{2g+2})$  with $c\in K^*$ its affine equation.

 {\it The dual graph} of the reduction 
 $Red\colon \mathbb{P}^1_K\rightarrow \overline{(\mathbb{P}^1_K,\mathbb{B})}$ consists of a set of vertices $V$ 
 and  a set $E$ of edges. For notational convenience we suppose that the reduction has more than one component.
  Each $v\in V$ denotes a component $L_v\cong \mathbb{P}^1_k$ of the reduction.
 Let $L_v^*$ denote $L_v$ minus its double points and write $U(v)$ for the affinoid space $Red^{-1}(L_v^*)$.
 
  An edge $e\in E$ between $v_1,v_2\in V$ corresponds to a point $\{p\}=L_{v_1}\cap L_{v_2}$ of the reduction.
  Let $p^*$ denote $L_{v_1}\cup L_{v_2}\setminus \{ \mbox{ all nodes and the images of } \mathbb{B}\}$. 
  
  Put $U(e):=Red^{-1}(p^*)$.  Then $\{ U(v) \}_{v\in V} \cup \{U(e)\}_{e\in E}$ is a pure affinoid
covering of $\mathbb{P}^1_K$ and the corresponding reduction is again $Red$.  
     
 Write $X(v):=\phi^{-1}U(v),\ X(e):=\phi^{-1}U(e)$, then  $\{ X(v) \}_{v\in V} \cup \{X(e)\}_{e\in E}$ is an affinoid
 covering of $X$. We will show that, after possibly a finite, separable extension of $K$, this is a pure covering and
 produces the unique minimal semi-stable reduction of $X$ that separates the ramification points $\phi^{-1}\mathbb{B}$.
 For this purpose we have to compute equations for the affinoid spaces $X(v)$ and $X(e)$.
 
  {\it  It suffices to show} that 
 the spectral norms of these affinoid spaces  take their values in $|K|$, that the irreducible components of their reductions
 have no singularities and that they meet in ordinary double points.\\

{\bf Formulas for $X(v)$}. For a suitable choice of the parameter $z$ for $\mathbb{P}^1_K$ the data are as follows.
There are elements $b_1,\dots , b_t, d_1,\dots ,d_u\in K^o$ with distinct images in $k=K^o/K^{oo}$. 
Then $L_v$ with parameter $\bar{z}$ has double points $\bar{b}_1,\dots ,\bar{b}_t,\infty$ and the images of $\mathbb{B}$
(under $Red$ lying on $L_v$) are $\bar{d}_1,\dots ,\bar{d}_u$.  Note that $t$ or $u$ can be ``zero''.
The affinoid space  $U(v)=Red^{-1}L_v^*$ has the equations $ |z|\leq 1, |z-b_1|=\cdots =|z-b_t|=1$. The relation with $\mathbb{B}$ is: $m_i:=\# (\mathbb{B}\cap \{ |z-b_i|<1\})$ and $m_\infty:=\# (\mathbb{B}\cap \{ |z|>1\})$ are $\geq 2$, 
$\# (\mathbb{B}\cap \{ |z-d_i|<1\})=1$ and $u+m_\infty +\sum m_i =2g+2$.\\

The original equation $y^2=c(x-e_1)\cdots (x-e_{2g+2})$ is transformed into one of the form
\[y^2=\tilde{c}(z-b_1)^{m_1}\cdots (z-b_t)^{m_t}(z-d_1)\cdots (z-d_u)(1+\epsilon)\] with
$\epsilon \in O(U(v)),\ \|\epsilon \| <1,\ \tilde{c}\in K^*$.  Note that $(1+\epsilon)$ is a square.
After removing square factors this reduces to 
\[y^2=f:=\tilde{c}(z-b_1)^*\cdots (z-b_t)^*(z-d_1)\cdots (z-d_u)\]
with $*\in \{0,1\}, *\equiv m_i \bmod (2)$. \\

A reduction of the affinoid algebra $O(X(v))$ produces a reduced space only if one uses the 
spectral norm $\| \ \|_{sp}$.The reduction with respect to the spectral norm has good properties 
only if $\|O(X(V))\|_{sp}\subset |K|$. Now $\|y^2\|_{sp}=|\tilde{c}|$. 
{\it In case $|\tilde{c}|\in |K^*|^2$}, we may suppose
that $|\tilde{c}|=1$. One makes the guess that $O(X(v))^o=O(U(v))^o[y]/ (y^2-f)$.
The guess is correct if the residue algebra $\overline{O(X(V))}$ is reduced. 
 The latter  is equal to $k[z,\frac{1}{(z-\bar{b}_1)\cdots (z-\bar{b}_t)}][y]/ (y^2-\bar{f})$ where $\bar{f}$ is the
 separable polynomial $\bar{\tilde{c}}(z-\bar{b}_1)^*\cdots (z-\bar{b}_t)^*(z-\bar{d}_1)\cdots (z-\bar{d}_u)$.
Thus $\overline{O(X(V))}$ is reduced and has no singularities.\\

{\it In the case $|\tilde{c}|\not \in |K^*|^2$}, an extension $K'=K(a)$ with $|a^2|=|\tilde{c}|$ is needed to 
obtain a semi-stable reduction. Then one continues as above.

 One observes that the genus of the component in the reduction of $X$, covering $L_v$, is zero if and only if
$u+\# \{*\mbox{ with }m_*\equiv 1 \mod 2\}$ is $\leq 2$. \\

{\bf Formulas for $X(e)$}. Let $z$ be a parameter for $\mathbb{P}^1_K$. Consider elements
$a, b_1,\dots, b_t, c_1,\dots , c_u \in K^*$ such that $0<|a|<1$, $b_1,\dots ,b_t, c_1,\dots ,c_u\in K^o$ and
the images of $b_1,\dots ,b_t$ in $k$ are distinct and the images of $c_1,\dots ,c_u$ in $k$ are distinct. 
 The affinoid  space $Red^{-1} p^*$ is given by the equations $|a|\leq |z| \leq 1$ and 
$|z-b_i|\geq 1$ for all $i$ and $|\frac{a}{z}-c_j|\geq 1$ for all $j$. The elements of $\mathbb{B}$ are mapped
to the ``holes'' of the affinoid space, i.e., $|z-b_i|<1$ for all $i$, $|z|>1$ and $|\frac{a}{z}-c_j|<1$ for all $j$. \\
 
 The original equation $y^2=c(x-e_1)\cdots (x-e_{2g+2})$ transforms into an equation
 $y^2=f$ with $f=\tilde{c}z^m(z-b_1)^*\dots (z-b_t)^*(\frac{a}{z}-c_1)^*\cdots (\frac{a}{z}-c_u)^*(1+\epsilon)$
 where $\| \epsilon\|<1$.  As before, we may suppose that $m$ and all $*$ belong to $\in \{0,1\}$ and we omit
 the term $(1+\epsilon)$.  One observes that $\|y^2\|_{sp}=|\tilde{c}|$. As in the former case one may need
 an extension of $K$ in order to normalize to $|\tilde{c}|=1$. After this we have to consider two subcases.\\
 
 {\it Subcase $m=0$}.  $O(X(e))^o=O(U(e))^o[y]$ with equation $y^2-f$. The reduction $R$ of $O(U(e))^o$ is
 \[k[X_1,X_2, \frac{1}{(X_1-\bar{b}_1)\cdots (X_1-\bar{b}_t)}, \frac{1}{(X_2-\bar{c}_1)\cdots (X_2-\bar{c}_u)}]/(X_1X_2).\]
The reduction of $O(X(e))^o=O(U(e))^o[y]$ is $R[y]/(y^2-\bar{f})$. This is an \'etale extension since 
$\bar{f}=\bar{\tilde{c}}(X_1-\bar{b}_1)^*\cdots (X_1-\bar{b}_t)^*(X_2-\bar{c}_1)^*\cdots (X_2-\bar{c}_u)^*$ is a unit. \\

{\it Observations}:\\
(1). Omitting a double point $p$ from the reduction $\overline{(\mathbb{P}^1_K,\mathbb{B})}$
produces two connected components and splits $\mathbb{B}$ into two subsets. Now $m\equiv 0 \mod (2)$
occurs if and only if these two subsets have even cardinality.\\
(2).  In the case $m\equiv 0 \mod (2)$, there are above the double point $p$ of $\overline{(\mathbb{P}^1_K,\mathbb{B})}$, 
two points of $\overline{(X, Ram_X)}\times_kk^{alg}$. Here $k^{alg}$ denotes the algebraic closure of the residue field $k$
of $K$. These two points may produce a quadratic extension of $k$.\\

{\it Subcase $m=1$}. We start with the equation $y^2=zw$ and where\\
 $w:=\tilde{c} (z-b_1)^*\dots (z-b_t)^*(\frac{a}{z}-c_1)^*\cdots (\frac{a}{z}-c_u)^*$ with $*\in \{0,1\}$.
 Then $\| y \|^2_{sp}=|\tilde{c}|$ and $\|y^{-1}\|_{sp}^2=|\frac{1}{\tilde{c}\cdot  a}|$. For a good reduction it is needed
 that both $|\tilde{c}|$ and $|a|$ belong to $|K^*|^2$. After enlarging $K$, if needed, 
 we suppose that this is the case. As before, we normalize to $|\tilde{c}|=1$ and choose an element $\alpha \in K^*$ with
 $|\alpha |^2=|a|$.\\
 
 A computation leads to the guess $O(X(e))^o=O(U(e))^o[Y_1,Y_2]/J$ where the ideal $J$ is generated by the elements
 $Y_1^2-zw,\ Y_2^2-\frac{\alpha ^2}{a}\cdot \frac{a}{z}\cdot w^{-1}, Y_1Y_2-\alpha$.  
 
 The reduction  $\overline{O(X(e))}$ is the algebra $k[X_1,X_2,Y_1,Y_2,\frac{1}{F}]/ I $ where
 \[F=(X_1-\bar{b}_1)\cdots (X_1-\bar{b}_t)\cdot (X_2-\bar{c}_1)\cdots (X_2-\bar{c}_u)\] 
 and $I$ is the ideal generated by
 $ X_1X_2,Y_1Y_2, Y_1^2-X_1\overline{w},Y_2^2-X_2 \overline{\frac{\alpha^2}{a}} \overline{w}^{-1}$.
 One verifies that this space is reduced. This implies that the chosen norm is the spectral norm. 
 The reduction consists of two  smooth, irreducible affine curves over $k$, meeting in a single double point $q$
 which lies ramified above the double point point $p$ in $\overline{(\mathbb{P}^1_K,\mathbb{B})}$.  The formal local rings of $p$ and $q$ are $K^o[[X_1,X_2]]/(X_1X_2-a)$ and $K^o[[ Y_1,Y_2]]/(Y_1Y_2-\alpha)$ with
 $|a|=|\alpha |^2$.   This ends the verification of  Proposition \ref{2.1} \hfill    $\square$\\

 \begin{corollary}\label{twist} 
 Let $\phi_i\colon X_i\rightarrow \mathbb{P}^1, i=1,2$ denote Whittaker curves over $K$ with the same branch locus 
 $\mathbb{B}\subset \mathbb{P}^1(K)$.  Then there is an isomorphism $\alpha \colon X_1\rightarrow X_2$ with $\phi_2\circ \alpha =\phi _1$.  
 \end{corollary}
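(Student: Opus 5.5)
Two hyperelliptic curves over $K$ with the same branch locus $\mathbb{B}\subset\mathbb{P}^1(K)$ are quadratic twists of each other. Write $X_1\colon y^2=c_1 f(x)$ and $X_2\colon y^2=c_2 f(x)$, where $f(x)=\prod_{e\in\mathbb{B}\cap K}(x-e)$ and $c_1,c_2\in K^*$. An isomorphism $\alpha$ with $\phi_2\circ\alpha=\phi_1$ exists if and only if $c_1/c_2\in (K^*)^2$; then $\alpha(x,y)=(x,\lambda y)$ with $\lambda^2=c_2/c_1$. So the plan is to show that $c:=c_1/c_2$ is a square in $K$. The assumption that both curves are Whittaker curves over $K$ will be used through Theorem~\ref{1.8}: for each $X_i$, the minimal semi-stable reduction $\overline{(X_i,Ram)}$ exists over $K$ and all its components are projective lines over $k$.

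The key step is a local computation on one component. Since $g\ge 2$, the reduction $\overline{(\mathbb{P}^1_K,\mathbb{B})}$ has some component $L_v$, and we use the formulas for $X(v)$ from the proof of Proposition~\ref{2.1}. On $U(v)$ the equation of $X_i$ becomes
\[ y^2=c_i\tilde{c}\,h(z)(1+\epsilon), \]
where $h$ is the square-free part $(z-b_1)^*\cdots(z-d_u)$ and $(1+\epsilon)$ is a square. Proposition~\ref{2.1} and Theorem~\ref{1.8} say the pull-back reduction of $X_i$ is the minimal one, and for a Whittaker curve it exists over $K$. This existence over $K$ forces two things.
\begin{itemize}
\item[(a)] $|c_i\tilde{c}|\in|K^*|^2$. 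Otherwise the spectral norm would not be in $|K|$, and a ramified extension would be needed, as in the ``In the case $|\tilde c|\notin|K^*|^2$'' discussion.
\item[(b)] After scaling to a unit, the reduction $y^2=\overline{c_i\tilde c}\,\bar h$ over $k$ gives components that are projective lines over $k$ and not conjugate pairs.
\end{itemize}
From (a) for $i=1,2$ we get $|c|\in|K^*|^2$. After multiplying $c$ by a square, we may assume $c$ is a unit.

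It remains to show $\bar c\in (k^*)^2$. Then, since $\mathrm{char}\,k\neq 2$, Hensel's lemma gives $c\in (K^*)^2$. Here we use that every component of $\overline{(X_i,Ram)}$ is a projective line over $k$ (Theorem~\ref{1.8}(2) together with Proposition~\ref{1.3}), hence has $k$-rational points. We will need to choose a suitable spot on the reduced curve to exploit this. By Theorem~\ref{1.6}, the square-free part $\bar h$ has degree at most $2$. There are two cases.
\begin{itemize}
\item If some $L_v$ has $\deg\bar h\in\{0,2\}$, then above a suitable point of $L_v$ the two curves $y^2=\bar u_1\bar h$ and $y^2=\bar c\,\bar u_1\bar h$ must both split appropriately over $k$. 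More concretely, above an unramified node (Subcase $m=0$) the two preimages are defined over $k$ exactly when the relevant unit $\bar f$ at the node is a square. Existence of the reduction over $K$, with its nodes $k$-rational, forces this, which gives $\bar c$ square.
\item If every node is ramified and every $\bar h$ has degree $1$, then we compare the equations $Y_1^2=X_1\bar w$ in Subcase $m=1$. Here one must check that a twist by a nonsquare unit still yields a reduction over $K$. If it does, the conclusion could fail, so the argument has to show that this configuration does not occur. The point is that the parity data of a Whittaker configuration always produces an unramified double point, or an even-degree $\bar h$ with the image of an edge having two rational preimages.
\end{itemize}

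I expect the main obstacle to be exactly this last point: showing that at least one location in the reduction detects the residue class of $c$ modulo squares. Equivalently, it is to prove that the quadratic twist by a nonsquare unit of a Whittaker curve is never Whittaker over $K$. I would first try to locate an unramified double point using Observation~(1): some edge splits $\mathbb{B}$ into two even subsets, which I expect follows from the pairing $\{a_i,b_i\}$ of Comments~\ref{newpairs}. At such a node, the rationality of its two preimages over $k$ is exactly what forces $\bar c$ to be a square.
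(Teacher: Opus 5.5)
Your proof is correct and follows the paper's route: you use the local equations from Proposition~\ref{2.1} with $c_1/c_2=\tilde c_1/\tilde c_2$, get $|c_1/c_2|\in|K^*|^2$ from the spectral-norm condition, show the residue of $c_1/c_2$ is a square because both reductions are split over $k$, and finish with Hensel. Detecting the residue class at an unramified node over an even edge (whose two preimages are $k$-rational, being images of $k$-rational nodes of $\overline{\Omega}$) is a useful sharpening of the paper's ``nontrivial twist'' remark, since a twisted odd component is still a line over $k$. The point you left open is immediate: a leaf of $P$ can only be of type (c), so its unique edge is even, and your all-ramified case never occurs.
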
  
 \begin{proof}  Let  $y^2=c_i(x-e_1)\cdots (x-e_{2g+2}), i=1,2$ denote affine equations for $X_i$. In the proof of Proposition \ref{2.1}
 we considered the corresponding equations  \[y^2=\tilde{c}_i(z-b_1)^{m_1}\cdots (z-b_t)^{m_t}(z-d_1)\cdots (z-d_u)(1+\epsilon)\]
 for  $X_i(v)$ for some vertex $v$ (or a similar equation for $X_i(e)$ and some edge $e$). By construction 
 $\frac{c_1}{c_2}=\frac{\tilde{c}_1}{\tilde{c}_2}$. Sinds the $X_i$ are Whittaker curves, one has $|\tilde{c_i}|\in |K^*|^2$. 
 Therefore $|\frac{c_1}{c_2}|\in |K^*|^2$.  Thus we may start by supposing $|c_1|=|c_2|$. 
 
    Since the $X_i$ are supposed to be Whittaker curves, the reductions $X_i(v)$ for suitable vertices $v$ are both affine lines over the residue field $k$, i.e., an open subset of $\mathbb{P}^1_k$. In this step the $\tilde{c}_i$ are normed to $|\tilde{c}_i|=1$ and
now the image of $\frac{\tilde{c}_1}{\tilde{c}_2}$ in $k^*$ is a square. Indeed, otherwise one reduction is a nontrivial twist of
the other.  Again  $\frac{c_1}{c_2}=\frac{\tilde{c}_1}{\tilde{c}_2}$ and the residue of the latter ratio in $k^*$ is a square.
Therefore $\frac{c_1}{c_2}$ is a square.   \end{proof}
  
  \noindent {\it Remark}. Corollary \ref{twist} extends a result in Teitelbaum's thesis \cite[Lemma 34]{T} 
  concerning  curves of genus 2, to arbitrary genus. \\

   \noindent
  {\it  Example}. Consider $X$ with equation
   \[y^2=cx(x-\epsilon_0)(x-1)(x-1-\epsilon_1)(x-2)(x-2-\epsilon_2)\] with all $0<|\epsilon_i|<1$ and 
  $|c|=1$. The reduction of $X$ obtained by pull back of $\overline{(\mathbb{P}^1, \mathbb{B})}$ depends on 
  $\overline{c}\in k^*$. In case $\overline{c}$ is a square, we may take $c=1$ and the reduction $\overline{X}$ consists of two
  projective lines over the residue field $k$ intersecting in three ordinary double points defined over $k$ and with tangents defined over $k$.
  Then $X$ is a Whittaker curve over $K$, defined by fixed points in $\mathbb{P}^1(K)$.
  
  In the opposite case, the reduction $\overline{X}$ is irreducible and $X/K$ is not a Whittaker curve. However 
  $X\times_KK(\sqrt{c})$ is a Whittaker curve.\\
  
  \noindent A natural question is: the explicit parametrization of a Whittaker curve (Prop \ref{3.2}) produces an equation 
  $y^2=c(x- \dots)\cdots (x-\dots)$. Is $c$ always a square? In Observation~\ref{g2geval} we indicate why indeed 
this holds for $g=2$ and for various configurations occurring in genus~$3$..

   \color{black}
\section{\rm  A fundamental domain for a Whittaker curve}\label{Section4}

  $\ \ \ $ {\it Notation}. Let $\phi \colon X\rightarrow \mathbb{P}^1_K$ denote a Whittaker curve over $K$ with branch locus $\mathbb{B}\subset \mathbb{P}^1(K)$,
    with ramification locus $Ram \subset X(K)$ and hyperelliptic involution $\sigma$. Then $\phi$ induces  a morphism
    $\bar{\phi}\colon \overline{X}:=\overline{(X,Ram)}\rightarrow \overline{(\mathbb{P}^1, \mathbb{B})}$ between the reductions and a morphism
    $\bar{\phi}^d\colon \overline{X}^d\rightarrow  \overline{(\mathbb{P}^1, \mathbb{B})}^d$ between the dual graphs of these spaces.  We note that $\sigma$ acts on
    $\overline{X}$ and $\overline{X}^d$.\\
    
     Let $un\colon \Omega \rightarrow X$ denote the universal rigid analytic covering with covering group $W$ and Whittaker groups $\Gamma \supset W$. 
     Let $\overline{\Omega}$ denote the reduction with respect to $un^{-1}(Ram)$. We note that the latter equals $\Gamma (S)$ where $S$ the set   
     of fixed  points of  generators of order two  for $\Gamma$. 
     
     Then $un$ induces the universal covering (with respect to the Zariski topology)
     $\bar{un}\colon \overline{\Omega}\rightarrow \overline{X}$ and the universal covering (for graphs with their usual topology)  of the dual graph 
     $\bar{un}^d\colon \overline{\Omega}^d\rightarrow \overline{X}^d$. \\
     
      The aim of this section is to {\it define} a `fundamental domain' $F\subset \Omega$ for the action of a given Whittaker group $\Gamma$ on $\Omega$
      by a construction. {\it The first step is the construction of a fundamental domain $F_0\subset \overline{\Omega}^d$ 
      for the action of $\Gamma$ on $\overline{\Omega}^d$}. 
     
     Recall that $\bar{un}^d\colon \overline{\Omega}^d\rightarrow \overline{X}^d$ has covering group $W$, the fundamental group 
     of $\overline{X}^d$.  Further $\Gamma=<W, s>$ where $s$ is any lifting of $\sigma$ satisfying $s^2={\bf 1}$.
     
     One chooses a connected subgraph $R$ of $\bar{X}^d$, which is mapped isomorphically by $\bar{\phi}^d$ to 
     $\overline{(\mathbb{P}^1,\mathbb{B})}^d$. This is done by   
     deleting from $\bar{X}^d$ for each pair of $\sigma$-conjugated edges one of them and similarly for pairs of $\sigma$-conjugated vertices.  
     Then $\bar{\phi}^d\colon R\rightarrow \overline{(\mathbb{P}^1,\mathbb{B})}^d$ is clearly an isomorphism. Since $R$ is simply connected, 
     the preimage $(\bar{un}^d)^{-1}(R)$ of $R$ is the distinct
     union of connected components isomorphic to $R$. The {\it fundamental domain $F_0\subset \overline{\Omega}^d$} is 
     by definition one of these components and $(\bar{un}^d)^{-1}R$ is the disjoint union $\cup _{w\in W} wF_0$.  
     The construction yields canonical isomorphisms $F_0\rightarrow R\rightarrow \overline{(\mathbb{P}^1,\mathbb{B})}^d$. \\

     The subtree $F_0$ of $\overline{\Omega}^d$ lifts to an algebraic subspace $F_1$ of $\overline{\Omega}$ which is the union
     of a finite number of lines over $k$ (projective or affine). \\
     The {\it fundamental domain} $F\subset \Omega$ is defined as the preimage $Red^{-1}(F_1)$ of $F_1$.
     
     \begin{remarks}\label{new4.1}{\it  Some properties of fundamental domains.}\\ 
      (1).$\ F$ is a connected affinoid space with a reduction $\overline{F}$, induced by the reduction of $\Omega$.
       By construction $\overline{F}=F_1$ and the dual graph $\overline{F}^d$ is $F_0$. The latter identifies with 
       $\overline{(\mathbb{P}^1,\mathbb{B})}^d$. There are finitely many fundamental domains $F$, corresponding to the
       finitely many choices for $R\subset \overline{X}^d$.  The opposite $F^{opp}$ of $F$ is defined by the choice 
       $\sigma R \subset \overline{X}^d$.  From $R\cup \sigma R=\overline{X}^d$ one concludes that 
       $\Omega=\cup _{\gamma \in \Gamma} \gamma (F)$.\\
       \noindent      (2). The ramification points of $X$ have a natural division into pairs (see \ref{newpairs}) and are mapped to 
       vertices of $\overline{X}^d$ which are ramified over $\overline{(\mathbb{P}^1, \mathbb{B})}^d$. This and the choice of 
       $R$ and $F_0$ imply that
      the intersection of $F$ with the set of all fixed points in $\Omega \subset \mathbb{P}^1$ consists of 
       the pairs of points $\{A_i,B_i\}_{i=0}^g$ and are mapped to the pairs of branch points $\mathbb{B}\subset \mathbb{P}^1$.
         Let $s_i$, for $i=0,\dots ,g$,  denote the order two element in $PGL_2(K)$ with fixed points
        $A_i,B_i$. By construction, $s_i\in \Gamma$. We will show that  $\Gamma =<s_0>* \dots * <s_g>$.
        This follows from Theorem \ref{2.2}, where the action of $\Gamma$ on $\Omega$ is replaced by the `equivariant' action on
        $\overline{\Omega}^d$.  \end{remarks} 
       
       \begin{lemma} \label{new4.2} Consider (with the above notation) the action of $\Gamma$ on $\overline{\Omega}^d$.\\
        If $\gamma\in\Gamma$ satisfies $\gamma F_0\cap F_0\neq \emptyset$, then  $\gamma \in \{1, s_0,\dots ,s_g\}$.
        \end{lemma}
     \begin{proof}
     Let $\gamma F_0\cap F_0\neq \emptyset$.  If $\gamma \in W$, then $\gamma =1$ since the sets $\{wF_0\}_{w\in W}$ are disjoint. 
Now consider $\gamma \not \in W$. The set $\gamma F_0\cap F_0$ is connected, since $\overline{\Omega}^d$ is a tree.
The image of $\gamma F_0\cap F_0$ (under $\overline{un}^d\colon \overline{\Omega}^d\rightarrow \overline{X}^d$) lies in a connected component of $R\cap \sigma R$. 

 Let
$\{\bar{a}_i,\bar{b}_i \} \subset \overline{X}^d$ denote the images of the ramification points of $X$. Let $C_i$, for $i=0,\dots ,g$, be the connected component of $R\cap \sigma R$ containing $\bar{a}_i,\bar{b}_i$.  
 One can  verify it follows from the construction of $R$ that $R\cap \sigma R$ is the (disjoint) union of these  
 connected $C_i,\ i=0,\dots ,g$. 
 
\noindent  Indeed, consider for convenience the case that every vertex of $\overline{X}^d$ is odd. Let $e_1,\tilde{e}_1,\dots ,e_g,\tilde{e}_g$ be the
pairs of conjugated edges. Then $R=\overline{X}^d\setminus \{e_1,\dots ,e_g\}$ and
 $\sigma R= \overline{X}^d \setminus \{\tilde{e}_1,\dots ,\tilde{e}_g\}$. Further $R\cap \sigma R$ is  obtained from the tree $R$ by deleting
 $g$ edges. It has $g+1$ connected components and it can be seen  that  each component contains some images of the  branch points.
 
  Let $C_i$ be the image of $\gamma F_0\cap F_0$. Then the images of $A_i,B_i$ are in  $\gamma F_0\cap F_0$. These points also belong to  $s_i\gamma F_0\cap s_iF_0$. Therefore $F_0 \cap s_i\gamma F_0 \neq \emptyset$.  Since $s_i\gamma \in W$ 
 one has  $s_i\gamma =1$ and $\gamma =s_i$.   \end{proof}
     
  \begin{theorem} \label{2.2} A Whittaker curve $X$ over $K$ is given and a fundamental domain $F_0\subset \overline{\Omega}^d$
     is chosen. Let $\Gamma, W, F_0, A_i,B_i$ etc. be defined as above.\\
      Let $s_i$, for $i=0,\dots, g$, be the automorphism of 
     $\overline{\Omega}^d$ which lifts the automorphism $\sigma$ of $\overline{X}^d$ and fixes the vertices (or vertex) of $F_0$
     which contains the images of $A_i,B_i$.
      Then  $s_i^2=1$ for all $i=0,\dots ,g$ and $\Gamma = <s_0>*\dots *<s_g>$.  \end{theorem}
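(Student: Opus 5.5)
The plan is to work entirely on the tree $\overline{\Omega}^d$, on which $\Gamma=\langle W,s\rangle$ acts and which has $F_0$ as a fundamental domain, and to prove the free product decomposition by a ping-pong / Bass--Serre type argument driven by Lemma~\ref{new4.2}.

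\emph{Step 1: $s_i^2=1$ and $s_i\in\Gamma$.} Since $\overline{\Omega}^d\to\overline{X}^d$ is the universal covering with group $W$, any lift of $\sigma$ lies in $\Gamma$ (as $\Gamma=\langle W,s\rangle$ and all lifts of $\sigma$ form the coset $sW$). Such a lift is determined by where it sends one vertex. The vertex $v_i\in F_0$ carrying the images of $A_i,B_i$ maps to a ramified vertex of $\overline{X}^d$, which is $\sigma$-fixed, so there is a unique lift $s_i$ fixing $v_i$. Then $s_i^2$ lifts $\sigma^2=\mathrm{id}$, so $s_i^2\in W$, and it fixes $v_i$. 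Because $W$ acts freely on vertices, we get $s_i^2=1$. When the images of $A_i,B_i$ lie on two vertices, the connected component $C_i$ of $R\cap\sigma R$ from the proof of Lemma~\ref{new4.2} is handled the same way.

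\emph{Step 2: generation.} Let $\Gamma'=\langle s_0,\dots,s_g\rangle\subset\Gamma$. We have $\bigcup_{\gamma\in\Gamma}\gamma F_0=\overline{\Omega}^d$, because $R\cup\sigma R=\overline{X}^d$. The translates $\gamma F_0$ with $\gamma\in\Gamma'$ form a union that is connected, since adjacent translates are glued. Indeed, by the argument in the proof of Lemma~\ref{new4.2}, any translate $\gamma F_0$ meeting $F_0$, or joined to it by an edge, has $\gamma\in\{1,s_0,\dots,s_g\}$. This needs care: we must also treat translates $\gamma F_0$ joined to $F_0$ only by an edge not lying in $\bigcup wF_0$. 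Since $R$ omits one edge from each conjugate pair, such an edge is the image of an edge of $\sigma R$ and hence lies in some $s_j F_0$. So every neighbouring translate is an $s_j$-translate. Therefore $\bigcup_{\gamma\in\Gamma'}\gamma F_0$ is open and closed in the connected tree, hence equal to it. Now take any $\gamma\in\Gamma$. Then $\gamma F_0$ meets $\delta F_0$ for some $\delta\in\Gamma'$, and Lemma~\ref{new4.2} gives $\delta^{-1}\gamma\in\{1,s_j\}$, so $\gamma\in\Gamma'$.

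\emph{Step 3: freeness.} Let $w=s_{i_1}s_{i_2}\cdots s_{i_n}$ be a reduced word, meaning $i_k\neq i_{k+1}$ and $n\ge1$. I will show $w\neq1$. The translates $F_0$, $s_{i_1}F_0$, $s_{i_1}s_{i_2}F_0,\dots$ form a path of fundamental domains: consecutive ones meet, at $s_{i_1}\cdots s_{i_{k}}(C_{i_{k+1}})$. The key claim is that, because $i_k\neq i_{k+1}$ and the components $C_i$ are disjoint in $F_0$, this chain never backtracks in the tree. More precisely, removing $C_i$ (or the corresponding edge) separates the tree, and $s_i$ swaps the side containing $F_0\setminus C_i$ with its mirror image. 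A ping-pong argument then applies. For each $i$, let $U_i$ be the union of the components of $\overline{\Omega}^d\setminus F_0$ attached at $C_i$. Then $s_i$ maps $\overline{\Omega}^d\setminus U_i$ into $U_i$, and the sets $U_i$ are pairwise disjoint. Consequently a reduced word moves a point of $F_0$ outside $F_0$, and in particular $w\neq1$.

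The main obstacle is Step 3, namely making precise that $s_i(F_0\setminus C_i)$ lies on the far side of $C_i$. This uses the tree structure together with the fact that $s_i$ exchanges the two $\sigma$-conjugate branches at the ramified vertex (or edge) $C_i$. I would verify this through the local picture of $\overline{X}^d\to\overline{(\mathbb{P}^1,\mathbb{B})}^d$ near ramified versus unramified vertices.
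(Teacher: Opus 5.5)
Your Steps 1 and 2 are sound and agree in substance with the paper. The paper forms $T=\bigcup_{g\in G}gF_0$ for $G=\langle s_0,\dots,s_g\rangle$. It uses Lemma~\ref{new4.2} to see that $T/(G\cap W)\to\overline{X}^d$ is injective, and then uses uniqueness of the universal covering to get $T=\overline{\Omega}^d$ and $G\cap W=W$. Your adjacency argument reaches $\Gamma=\langle s_0,\dots,s_g\rangle$ more directly. Your worry about translates ``joined only by an edge'' also disappears: translates are subgraphs containing the endpoints of their edges. So an edge at a vertex $x\in\delta F_0$ lies in some $\gamma F_0$ meeting $\delta F_0$ at $x$, and Lemma~\ref{new4.2} gives $\delta^{-1}\gamma\in\{1,s_0,\dots,s_g\}$.

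The real divergence is in freeness, and there your ping-pong is misstated.

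\emph{The error.} $C_i\subset R\cap\sigma R$ is fixed pointwise by $\sigma$, so $s_i$ fixes its lift $\tilde{C}_i\subset F_0$ pointwise. Since $\tilde{C}_i\not\subset U_i$, the claim that $s_i$ maps $\overline{\Omega}^d\setminus U_i$ into $U_i$ is false; already the one-letter word $s_i$ fixes points of $F_0$.

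\emph{What is true.}
First, $s_iF_0\cap F_0=\tilde{C}_i$ exactly: the intersection is connected, maps injectively into one component of $R\cap\sigma R$, and contains $\tilde{C}_i$.
Second, every edge leaving $F_0$ starts at some $\tilde{C}_j$, so $\overline{\Omega}^d=F_0\sqcup\bigsqcup_jU_j$.
Hence $s_i$ maps $\overline{\Omega}^d\setminus(U_i\cup\tilde{C}_i)$ into $U_i$.

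\emph{Where to start.} The ping-pong must start at a point $x\in F_0\setminus\bigcup_j\tilde{C}_j$. Such a point exists because $R$ is connected while $R\cap\sigma R$ has $g+1\ge 2$ components. It may have to be an interior point of an edge: in genus-two configuration (b), every vertex of $F_0$ lies in some $\tilde{C}_j$.

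With these corrections your argument works, but the paper shows it is unnecessary. Once $\Gamma$ is generated by the $s_i$, the index-two subgroup $W$ of even words is generated by the $g$ elements $s_0s_1,\dots,s_0s_g$. Since $W$ is free of rank $g$ and finitely generated free groups are Hopfian, these form a free basis. Together with $s_0(s_0s_i)s_0=(s_0s_i)^{-1}$, this gives $\Gamma=\langle s_0\rangle*\cdots*\langle s_g\rangle$.

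Your route gives a self-contained geometric picture of how reduced words push $F_0$ away, close to the distance estimate of Remarks~\ref{new4.4}(2). The paper's route replaces that bookkeeping with the known rank of $W$.
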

     \begin{proof}  The element $s_i^2$ lifts the identity and has a fixed point. Therefore $s_i^2=1$. 
     Write $G$ for the subgroup of $\Gamma$ generated by $s_0,\dots , s_g$. Consider the subgraph $T=\cup _{g\in G}gF_0$
     of $\overline{\Omega}^d$. From the position of the fixed points of  the $s_i$
      one deduces that $F_0\cup s_0F_0\cup \dots \cup s_gF_0$ is   
     connected. The same argument shows that $T$ is a connected subtree of $\overline{\Omega}^d$. As before,
     $\overline{un}^d\colon \overline{\Omega}^d\rightarrow \overline{X}^d$ denotes this universal covering. We note that 
     $\overline{\Omega}^d/W\rightarrow \overline{X}^d$ is an isomorphism.
     
     The restriction 
     $\tilde{un}\colon T\rightarrow \overline{X}^d$ of $\overline{un}^d$, is surjective since the image contains $R$ and $\sigma R$ and moreover
     $\overline{X}^d=R\cup \sigma R$.\\
       
 Consider the group $G_0:=G\cap W$, which has index 2 in $G$. It is  a free group and acts without fixed points on $T$.

 Lemma \ref{new4.2}  implies that the map $T/G_0\rightarrow \overline{X}^d$ is an isomorphism. Indeed, it suffices to prove that this map is injective. This
is equivalent to $wt_1=t_2$ with $t_1,t_2\in T,\ w\in W$ implies $w\in G_0$. Write $t_i=g_i\tilde{t}_i$ with $g_i\in G$ and $\tilde{t}_i\in F_0$ for $i=1,2$.  Then $g_2^{-1}wg_1\tilde{t}_1=\tilde{t}_2$.  By Lemma \ref{new4.2} one has $w\in G\cap W=G_0$.
 We found that  $\tilde{un}\colon T\rightarrow \overline{X}^d$ is also a universal covering.

The universal property of this  covering implies that $T=\overline{\Omega}^d$ and that $G_0=W$. The subgroup  $W$ of $\Gamma$ is generated by the elements
$\{s_0s_i\}_{i=1}^g$ and is known to be a free group of rank $g$. Thus the elements $\{s_0s_i\}_{i=1}^g$ have no relations and 
$\Gamma=<s_0>* \cdots *<s_g>$.
\end{proof}

\begin{remarks}\label{new4.4} \mbox{ }\\
(1). The proof of  Lemma \ref{new4.2}  is not given in all detail for all cases.  In three examples (\S\ref{2.3}), we make
  the universal coverings and \ref{new4.1}, \ref{new4.2} more explicit. The general case is a combination of these three examples.\\
  
  \noindent (2).    Lemma \ref{new4.2} can be formulated as follows: Translates $\gamma_1F_0\neq  \gamma_2F_0$ of the fundamental domain $F_0$  are   called {\it neighbours} if $\gamma_1F_0\cap \gamma_2F_0\neq \emptyset$. Thus the neighbours of $\gamma F_0$ are
 $\gamma s_iF_0$ for $i\in \{0,\dots , g\}$. One concludes:
   
{\it  For reduced words $t$ in $s_0,\dots ,s_g$, the distance of  the vertices in $tF_0$ and in $F_0$ in the tree  $\overline{\Omega}^d$ is greater than or equal to $-1+ {\rm length}(t)$.} \\

 \noindent (3).  {\it Notation}. Let $m\colon \mathbb{B}\rightarrow V$ denote the map from the branch points to the set of vertices of the tree 
  $\overline{(\mathbb{P}^1,\mathbb{B})}^d$.  An edge $e$ of this tree  is called {\it even} if the two connected components of
 $\overline{(\mathbb{P}^1,\mathbb{B})}^d \setminus \{e\}$ contain an even number of images of $\mathbb{B}$.  
Otherwise, the edge $e$ is called {\it odd}.  {\it The criterion \ref{1.6} for $\overline{(\mathbb{P}^1, \mathbb{B})}$ to produce a 
Whittaker curve and its minimality translate into:     }

        \indent 
         The (only)  possibilities for  the vertices $v\in V$ are \\
       (a).   $\# m^{-1}(\{v\})=0$, $v$ has $\geq 3$ edges and at most two of the edges are odd.\\
       (b).   $\# m^{-1}(\{v\})=1$, $v$ has $\geq  2$ edges and one edge is odd.\\  
       (c).  $\# m^{-1}(\{v\})=2$,  $v$ has  at least one edge and all its edges are even.\\
       
        A vertex $v$ is called {\it even } if $\# m^{-1}(\{v\})=0$ and every edge of $v$ is even. Otherwise $v$ is called odd.
        The degree two covering 
        $\overline{\phi}^d\colon \overline{X}^d\rightarrow \overline{(\mathbb{P}^1,\mathbb{B})}^d$ can be described by:   
        {\it    The even edges and the even vertices are doubled}. \\
       
   \noindent (4).    The fixed points of the Whittaker group $\Gamma=<S_0,\dots ,S_g>$ for its action on $\Omega$ map to the set of branch points
       $\mathbb{B}$. This produces a division of $\mathbb{B}$ into $g+1$ pairs  denoted by  $\{A_i,B_i\},\ i=0,\dots g$ (i.e., the images of the 
       fixed points of $S_i$). This does not depend on the choice of the 
       elements $S_0,\dots , S_g$, since any element $s\in \Gamma$ of order two is $\Gamma$-conjugated to a unique element $S_i$.

       These pairs can also be directly deduced from (a), (b) and (c) as follows:\\
       $b_1,b_2\in \mathbb{B}, b_1\neq b_2$ is a pair if (i) $m(b_1)=m(b_2)$ is a vertex of type (c), or \\
       (ii) $m(b_1), m(b_2)$ are vertices of type (b) and on the shortest path from $m(b_1)$ to $m(b_2)$ there are only vertices of type (a). \\

       \end{remarks}
       
\section{ \rm Configurations, branch points and fixed points}\label{Section5}

 \subsection{\rm Configurations and standard examples}
 
 \begin{definitions} \label{2.4} $\ $ {\it  Configurations, $Branch_{P,m}$ and $Fix_{P,m}$} \\
  \noindent  (1).  A {\it configuration} is  a pair $(P,m)$ where $P$ is a finite tree and $m$ is a map from
     the set of pairs $\{a_i,b_i\}_{i=0}^g$ to the vertices of $P$ such that for each vertex $v$, one of the properties (a), (b) and (c) 
     holds, where \\ 
      (a).   $\# m^{-1}(\{v\})=0$, $v$ has $\geq 3$ edges and at most two of the edges are odd.\\
       (b).   $\# m^{-1}(\{v\})=1$, $v$ has $\geq  2$ edges and one edge is odd.\\  
       (c).  $\# m^{-1}(\{v\})=2$,  $v$ has  at least one edge and all its edges are even.\\

\noindent (2).  An injective map $M\colon \mathbb{B}\rightarrow \mathbb{P}^1$ is called of type $(P,m)$ if the induced
map $\mathbb{B}\rightarrow \overline{(\mathbb{P}^1,M(\mathbb{B}))}^d$ coincides with $(P,m)$. We recall that such $M$, seen as 
branch locus, produces a Whittaker curve.  Let $Branch_{P,m}$ denote the set
of all maps of type $(P,m)$. 
Two elements $M_i\colon \mathbb{B}\rightarrow \mathbb{P}^1, \ i=1,2$ are identified if there is an element
$g\in {\rm PGL}_2$ with $M_2=g\circ M_1$.  We note that $Branch_{P,m}$ has a natural structure as rigid analytic space over $K$.
This will be made explicit in Examples \ref{2.3}.\\

\noindent (3). A injective map $M\colon \mathbb{B}\rightarrow \mathbb{P}^1$ of type $(P,m)$ induces a map, also called $M$, from 
$\mathbb{B}$ to the tree $\overline{(\mathbb{P}^1,M(\mathbb{B})}$ of projective lines over the residue field $k$. An irreducible 
component $L$ of this tree is called even, resp. odd, if the corresponding vertex of $\overline{(\mathbb{P}^1,M(\mathbb{B})}^d$
is even resp. odd.  

 Suppose that the component $L$ is odd. One considers the projection $pr_L\colon \overline{(\mathbb{P}^1,M(\mathbb{B})}  \rightarrow L$.
There are precisely two points $p$ on $L$ where the number of points  $pr_L^{-1}(\{p\})\cap M(\mathbb{B})$ is odd. Let   $p_1,p_2$
denote these points and define $s_L$ to be automorphism of order two of $L$ with fixed points $p_1.p_2$. Let 
$nodes(L)$ denote the nodes on $L$ different from $p_1,p_2$.  Now we can define:\\

\noindent
an (injective) map $M\colon \mathbb{B}\rightarrow \mathbb{P}^1$ of type $(P,m)$ is {\it restricted} if for very odd line $L$ the intersection
$nodes(L)\cap s_L(nodes(L))$ is empty.\\

Furthermore $Fix_{P,m}$ denotes the space of all maps $M\colon \mathbb{B}\rightarrow \mathbb{P}^1$ of type $(P,m)$ which are restricted.
Two elements $M_1,M_2$ are identified if there exists $g\in {\rm PGL}_2$ with $M_2=g\circ M_1$.  
It can be seen that $Fix_{P,m}$ has a natural structure as rigid analytic space.\\

  We note that the elements $M\in Fix_{P,m}$ are used as positions for the fixed points of Whittaker groups.
 \end{definitions}

  \begin{examples}  {\it Three  standard configurations for Theorem \ref{2.2}.}  \label{2.3}   
  
   A configuration can be seen as a combination of the following standard examples. In verifying (combinatorial) statements on
   configurations one may restrict to these examples.
  
  \label{(2.3.1)}\noindent  {\bf (5.2.1) First example}. \label{example5.2.1} {\it The tree $\overline{(\mathbb{P}^1,\mathbb{B})}^d$ is defined by}:\\
  The vertices are $p,v_0,v_1,v_2,\dots ,v_g$;  the edges  are $e_i=(p,v_i)$ for $i=0,\dots ,g$. 
  The branch points $a_i,b_i$ are mapped to $v_i$ for $i=0,\dots ,g$. 
  The graph $\overline{X}^d$ is obtained by doubling the vertex $p$, this produces vertices $p, \tilde{p}$. The edges  are doubled and this produces edges $e_i, \tilde{e}_i$.  A fundamental domain $F_0$, lying in the universal covering $UV$ of $\overline{X}^d$,  is associated to the tree $T\subset \overline{X}^d$ which is obtained by deleting $\tilde{p}$ and $\tilde{e}_{0}, \tilde{e}_{1},\dots ,\tilde{e}_{g}$. 
  
  We choose $v_0$ as base point for the homotopy. The points of the universal covering 
  \[pr\colon UV=\overline{\Omega}^d\rightarrow \overline{X}^d\] are
  the non-backtracking paths in $\overline{X}^d$, starting with $v_0$. The fibre $pr^{-1}(v_0)$ is the fundamental group $W$ which acts
  as automorphism group on $UV$. The action of the hyperelliptic involution $\sigma$ on $\overline{X}^d$ is denoted by
  $\ \tilde{}\ $ and on $UV$ is denoted by $s$.
  
 Let $a,b\in T$. Then $[a,b]$ denotes the path in $T$ from $a$ to $b$. Further $\tilde{[a,b]}$ denotes the image of this
  path under $s$. Put $\gamma_i=[v_0,v_i]\tilde{[v_i,v_0]}$ and  $s_i=s\gamma_i$ for $i=1,\dots , g$. Write $s_0=s$. 
  Define $\Gamma :=<W,s>$.  One can verify:\\
  (a).   $\{\gamma_i\}_{i=1}^g$ are free generators for $W$.\\
  (b).   $s\gamma_i=\gamma_i^{-1}s$ and $s_i^2=1$.\\
  (c).   $[\Gamma :W]=2$ and $\Gamma=<s_0>*\cdots *<s_g>$.\\
  (d).   Let $t\in <s_0>*\cdots *<s_g>$ be a reduced word. The distance between the vertices of 
  $F_0$ and $tF_0$ is $\geq -1+{\rm length} (t)$.\\
   There is only one other choice of fundamental domain, namely the one associated to the maximal subtree 
  $\sigma(T)\subset \overline{X}^d$. This corresponds to simultaneous conjugation by $s$ of the above formulas.\\
  
    Let $M\in Branch_{P,m}$, then $\overline{(\mathbb{P}^1,M(\mathbb{B}))}$ is a tree of lines $L_p,L_0,\dots ,L_g$ corresponding
    to the vertices $p,v_0,\dots ,v_g$ and with double points $L_p\cap L_0,\dots ,L_p\cap L_g$. The line $L_p$ is even.
    The lines $L_i, i=0,\dots , g$ are odd and $node(L_i)$ consists of one point $L_p\cap L_i$. We conclude that $M$ is
    restricted and that $Fix_{P,m}$ coincides with $Branch_{P,m}$. \\

      {\it The description of $Branch_{P,m}$  as rigid analytic space} is as follows. It is the subspace of $K^{2g-1}$ consisting of the tuples
      $(a_0,b_0,a_1,\dots , a_g,b_g)$ such that $a_0=0, a_1=1, a_g=\infty$ and 
      $(b_0,b_1,a_2,\dots , b_{g-1},b_g)\in K^{2g-1}$ satisfies the equalities and inequalities
      $ |a_i|\leq 1$ for $0\leq i\leq g-1$; $|a_i-a_j|=1$ for $i\neq j$ and $0\leq i,j \leq g-1$; $ 0< |b_i-a_i|<1$ for $0\leq i \leq g-1$ and
      $1<|b_g|<\infty$.      \hfill   $\square$\\

 \label{(2.3.2)}\noindent  {\bf (5.2.2) Second example}. \label{example5.2.2}{\it The tree $T:=\overline{(\mathbb{P}^1, \mathbb{B})}^d$ has only odd  vertices}. \\ 
 Let $e_1,\dots ,e_g$ denote the even edges of $T$, then $\overline{X}^d$ is obtained by adding for each $i$
  an edge $\tilde{e}_i$ with the same end points as $e_i$. The involution $\sigma$ on $\overline{X}^d$ is the
  identity on the vertices and permutes $e_i, \tilde{e}_i$ for every $i$.

  For notational convenience we subdivide each $e_i$ and $\tilde{e}_i$ by adding vertices $z_i$ and $\tilde{z}_i$.
 Moreover we choose a base point $p_0\in \overline{X}^d$ for the monodromy with $p_0\neq z_i,\tilde{z}_i$ for all $i$.\\
\indent  {\it The universal covering} $pr\colon UV\rightarrow \overline{X}^d$ can now be described as follows:\\
 (a). the vertices of the tree $UV$ are the paths $p:=p_0p_1\dots p_m$ in $\overline{X}^d$ starting with the vertex
 $p_0$ and non-backtracking (i.e., $p_j\neq p_{j+2}$).\\
 (We note that any path $q_1q_2\dots q_n$ can be changed into a non-backtracking path by omitting suitable $q_i$).\\
 (b). The neighbours of the vertex $p=p_0p_1\dots p_m$ are obtained by deleting $p_m$ or by adding a new vertex $p_{m+1}$.\\ 
 (c). $pr\colon UV\rightarrow \overline{X}^d$ is given by  $p=p_0p_1\dots p_m \mapsto p_m$.\\
 
{\it  Further notations and properties:\\}
 \noindent (d). $W=pr^{-1}(p_0)$ is a group under composition of closed paths. It is the fundamental
  group of $\overline{X}^d$ with base point  $p_0$. \\
 (e). $W$ acts on $UV$ by: $w\in W$ applied to $p_0p_1\dots p_m$ is  $wp_0p_1\dots p_m$.\\ 
 (f). The natural action of the involution $\sigma$ on $UV$ is called $s$. It acts on any path by interchanging all 
 $z_i, \tilde{z}_i$ occurring in that path. One has $s^2=1$.
  
   Further, $s$ is the lift of the automorphism $\sigma$ that fixes the path $(p_0)\in UV$. \\
 (g).   Any maximal tree in $\overline{X}^d$ is obtained by deleting, for each $i$, one of the vertices $z_i,\tilde{z}_i$.
 The tree obtained by deleting $\tilde{z_i}$ for every $i$ is also called $T\subset \overline{X}^d$.
 For two vertices $a,b$ we denote by $[a,b]$ the path in $T$ from $a$ to $b$. Furthermore $\tilde{[a,b]}$ will denote
 the result of the map $s$ applied to $[a,b]$.\\
 (h). For $i=1,\dots ,g$, the neighbours of both vertices $z_i,\tilde{z}_i$ are called $v_i,w_i$. The notation is choosen
 such that the path $[p_0,w_i]$ in $T$ contains $z_i$.

 For each $i$ we consider the closed path $\delta _i$, defined by $[p_0,w_i]\tilde{[w_i,v_i]}[v_i,p_0]$. The elements
 $\delta_1,\dots ,\delta_g$ are the standard free generators of the fundamental group $W$.
 Define $\gamma _i=[p_0,w_i]\tilde{[w_i,p_0]}$ for $i=1,\dots ,g$. One easily verifies that 
$\gamma_1,\dots ,\gamma_g$ are also free generators for $W$.\\
(i). We regard $W$ as a group of operators on $UV$. For this action one computes that for every $i$ one has
 $s\gamma_i=\gamma _i^{-1}s$. Put $s_i:=s\gamma_i$. Then $\Gamma:=<W,s>$ is an extension of degree 2 of $W$ and is freely generated by the elements $s_0=s,s_1,\dots ,s_g$ of order two.\\
 (j). From the explicit formulas of $s_0,\gamma_1,\dots ,\gamma_g$ one can deduce:
the distance of $tF_0$ and $F_0$ is $\geq -1+ \mbox{ length}(t)$  where $F_0\subset \overline{\Omega}^d=UV$ is the above 
fundamental domain $T$ and $t$ is a reduced word in the generators $s_0,\dots , s_g$.\\   
 (k). Let $T^*$ denote the maximal tree in $\overline{X}^d$ defined by deleting $a_i\in \{z_i,\tilde{z}_i\}$ for each $i$.
 This produces new free generators $\gamma_1^*,\dots ,\gamma_g^*$ for $W$ and new free generators $s_0^*,\dots , s_g^*$
 of order two for $\Gamma$. We study the relations between the new generators and the old ones.\\
 
  {\it Consider the special case} where $\overline{(\mathbb{P}^1,\mathbb{B})}^d$ has vertices $v_0,v_1,\dots, v_g$ 
and edges $e_i:=(v_0,v_i)$ for $i=1,\dots , g$ and the images of the branch points $a_i,b_i$ are in $v_i$ for 
$i=0,\dots ,g$. Take base point $p_0=v_0$ for $\overline{X}^d$. \\
Then: $\gamma_i^*=\gamma _i$ if $a_i=\tilde{z}_i$ and $\gamma_i^*=s_0\gamma _is_0=\gamma_i^{-1}$ if $a_i=z_i$. Further \\
$s^*_0=s_0$ and for $i\neq 0$ one has $s_i^*=s_i$ if $a_i=\tilde{z}_i$ and $s_i^*=s_0s_is_0$ if $a_i=z_i$.  \\

 {\it In the general case}, the formula for $\gamma_i^*$ is obtained from the above formula  by conjugating with terms  coming from
  the possibility that the path $[p_0,w_i]$ in $T$ contains some vertices $z_j$ with $j\neq i$.\\
 ($\ell$ ). Each choice of a maximal tree provides free order two generators for the group $\Gamma$. There are
$2^g$ possibilities for maximal trees. If one considers free order two generators modulo simultaneous conjugation
then there are $2^{g-1}$ possibilities. Indeed, the order two generators defined by any $T^*$ and those of  its ``opposite'' $\sigma T^*$ 
 differ by simultaneous conjugation. \\
 
 Let $(P,m)$ be the above special case and consider $M\in Branch_{P,m}$. Then $\overline{(\mathbb{P}^1,M(\mathbb{B}))}$ has 
 components $L_0,\dots ,L_g$ corresponding to the vertices $v_0,\dots ,v_g$ and nodes $p_i:=L_0\cap L_i, i=1,\dots ,g$. 
 All the $L_i$ are odd. For $i\neq 0$, the condition ``restricted'' is satisfied for $L_i$. For $L_0$, the condition ''restricted'' reads
 $\{p_1,\dots ,p_g\}\cap \{s_{L_0}(p_1),\dots ,s_{L_0}(p_g)\}=\emptyset$.  
 
   Thus $Fix_{P,m}$ is a proper subspace of $Branch_{P,m}$. \\

    {\it The analytic structure of $Branch_{P,m}$} for the special case can be given as follows. It consists of all tuples
   $(a_0,b_0,\dots ,a_g,b_g)$ with $a_0=0, b_0=\infty, a_1=1$ and 
   $(b_1,a_2,b_2,\dots ,a_g,b_g)\in K^{2g-1}$ satisfying the equalities and inequalities
   $|a_i|=1$ for $1\leq i \leq g$; $|a_i-a_j|=1$ for $i\neq j$ and $0\leq i,j\leq g$; $0<|a_i-b_i|<1$ for $1\leq i \leq g$.
   
  {\it The subspace $Fix_{P,m}$} of $Branch_{P,m}$ is given as rigid space by the additional equalities $|a_i+a_j|=1$
   for $i\neq j$ and $0\leq i,j\leq g$.          \hfill $\square$\\

\label{(2.3.3)} \noindent  {\bf (5.2.3) Third example}.\label{example5.2.3} {\it The   configuration $(P,\tilde{m})$  is defined as follows:}
  The set of vertices is $\{v_0,v_1,\dots ,v_n, w_0,w_1,\dots ,w_m\}$ and the set of edges is
 \[\{(v_0,v_i), i=1,\dots, n\}\cup \{(w_0,w_j), j=1,\dots ,m\} \cup \{(v_0,w_0)\}.\] 
The points $a_0,b_0$ are mapped by $m$ to $v_0, w_0$.
  The  points $a_j,b_j, 1\leq j \leq n+m$ are mapped pairwise to the vertices $v_1,\dots ,v_nw_1,\dots ,w_m$. 
 {\it The new feature} is that the pair $a_0,b_0$ is not mapped to the same vertex.
 
We will not study this case in detail because it resembles (5.2.2). In fact, if one identifies the vertices $v_0$ and $w_0$ and contracts the edge $(v_0,w_0)$, then we recover case (5.2.2).

Consider $M\in Branch_{P,\tilde{m}}$. Then $\overline{(\mathbb{P}^1,M(\mathbb{B}))}$ has irreducible  components \\$L_0,\dots ,L_n,M_0,\dots ,M_m$
corresponding to   $\{v_0,v_1,\dots ,v_n, w_0,w_1,\dots ,w_m\}$. All the components are odd. Restricted does not
impose restrictions for the components $L_i, i\neq 0$ and $M_i, i\neq 0$. Write $\{p_i\}=L_0\cap L_i$ for $i=1,\dots ,n$ and write
$s$ for the involution on $L_0$. Then the restriction for $L_0$ reads $\{p_1,\dots ,p_n\}\cap \{s(p_1),\dots, s(p_g)\}=\emptyset$.
The condition for $M_0$ is similar.
  
  One observes that $Fix_{P,\tilde{m}}$ is a proper subspace of $Branch_{P,\tilde{m}}$ except for the case $n=m=1$.

  The description of $Branch_{P,\tilde{m}}$ and $Fix_{P,\tilde{m}}$ as rigid space is similar
   to the two cases above.   \hfill $\square$ \\

 \end{examples}

 \subsection{\rm The Galois covering $FB\colon Fix_{P,m}\rightarrow Branch_{P,m}$}
  
  \begin{proposition}\label{2.6} Let the Whittaker curve  $X/K$ have branch locus
  $M\colon \mathbb{B}\rightarrow \mathbb{P}^1$ with configuration $(P,m)$ and Whittaker group $\Gamma$.
   Let $F\subset \Omega$ be any fundamental domain.  The intersection of $F$ with set of all fixed points of $\Gamma$ in $\Omega$ 
   is a set of pairs  $\{a_i,b_i\}_{i=0}^g$. Let $s_i$ be the involution with fixed points $\{a_i,b_i\}$.
   
  Then $\Gamma = <s_0>*\cdots *<s_g>$. Moreover, the map  $\tilde{M}\colon \mathbb{B}\rightarrow \mathbb{P}^1$ defined by these fixed points
    is restricted of type $(P,m)$.  
   \end{proposition}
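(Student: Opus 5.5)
The first claim, $\Gamma=\langle s_0\rangle*\cdots*\langle s_g\rangle$, I would deduce from Theorem~\ref{2.2} together with Remarks~\ref{new4.1}(2). The fundamental domain $F$ is by definition $Red^{-1}(F_1)$, where $F_1$ lifts a fundamental domain $F_0\subset\overline{\Omega}^d$. By Remarks~\ref{new4.1}(2), the fixed points of $\Gamma$ lying in $F$ are exactly $g+1$ pairs $\{a_i,b_i\}$, one pair for each $\Gamma$-conjugacy class of involutions. For the involution $s_i$ with fixed points $a_i,b_i$, I would check that its action on $\overline{\Omega}^d$ lifts $\sigma$ and fixes the vertex (or vertices) of $F_0$ containing $Red(a_i),Red(b_i)$. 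Indeed $s_i$ fixes $a_i$, so it fixes the component of $\overline{\Omega}$ through $Red(a_i)$; and since $s_i\notin W$, it induces $\sigma$ on $\overline{X}$. Hence $s_i$ coincides with the element called $s_i$ in Theorem~\ref{2.2}, and that theorem gives the free product decomposition.

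For the second claim, I would first show that $\tilde M$ has type $(P,m)$. The natural route is to compare $\overline{(\mathbb{P}^1,\tilde M(\mathbb{B}))}$ with $\overline{F}$. The reduction of $F$ is $F_1$, and its dual graph $F_0$ is canonically isomorphic to $P=\overline{(\mathbb{P}^1,\mathbb{B})}^d$ (Remarks~\ref{new4.1}(1)), with $Red(a_i),Red(b_i)$ placed on the vertices prescribed by $m$. The complement of $F$ in $\mathbb{P}^1$ is a finite union of open disks. Each such disk corresponds to an edge of $\overline{\Omega}^d$ leaving $F_0$, and contains none of the points $a_i,b_i$. Since open disks containing no marked points are contracted in the minimal reduction separating $\tilde M(\mathbb{B})$, the lines of $\overline F$ complete to a tree of projective lines. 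I expect this tree to be $\overline{(\mathbb{P}^1,\tilde M(\mathbb{B}))}$, with the points $a_i,b_i$ distributed as in $F_1$. What needs checking is minimality, namely that each component carries at least three special points. This should follow because components of $F_1$ are components of $\overline{\Omega}$, which already have at least three special points, and the holes on a component of $F_1$ are either nodes of $F_0$ or are replaced by nodes.

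It remains to verify the ``restricted'' condition, which I expect to be the main obstacle. Take an odd line $L$ of $\overline{(\mathbb{P}^1,\tilde M(\mathbb{B}))}$, that is, an odd vertex of $F_0$. The odd points $p_1,p_2$ on $L$ are the points under which an odd number of fixed points lie. I would argue that these correspond to the ramification of the double cover $\overline{X}\to\overline{(\mathbb{P}^1,\mathbb{B})}$ above $L$. Consequently the lift of $\sigma$ that stabilizes this vertex of $F_0$ (some $\gamma s_j\gamma^{-1}$, which one would check equals the relevant $s_j$ or a neighbour) restricts to $L$ as the involution $s_L$ fixing $p_1,p_2$.

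The remaining nodes of $L$ are then of two kinds. Those along edges lying inside $F_0$ are images of edges of $R$. Those along edges leaving $F_0$ are images of conjugate edges in $\sigma R$, i.e.\ they lie in $s_L F_0$ rather than $F_0$. I would then need to show that $s_L$ carries each node of $L$ that lies in $F_0$ to a point that is not a node of $F_0$. The argument goes via Lemma~\ref{new4.2}. If a node $q\in nodes(L)$ satisfied $s_L(q)\in nodes(L)$, then either the edge at $q$ would be fixed by the involution (impossible, since it would then be an odd edge through $p_1$ or $p_2$), or $F_0$ and $s_LF_0$ would share that edge. The latter contradicts the fact that $R$ contains only one edge from each $\sigma$-conjugate pair.

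The delicate points are the identification of $s_L$ with the reduction of a group element, and the handling of even components, where no condition is imposed. I would first verify all of this in the three standard configurations of Examples~\ref{2.3}, and then argue that the general case is a combination of these, as in Remarks~\ref{new4.4}(1).
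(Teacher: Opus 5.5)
Your proposal is correct and follows essentially the paper's route: the free product comes from Theorem~\ref{2.2} via Remarks~\ref{new4.1}, and restrictedness comes from the fact that $R$ keeps exactly one node from each $\sigma$-conjugate pair above an odd component, on which $\sigma$ acts as $s_L$. One small correction concerns minimality. The holes of $F$ on a component that are not edges of $F_0$ become ordinary, non-special points of $\overline{(\mathbb{P}^1,\tilde M(\mathbb{B}))}$; they are not replaced by nodes. Minimality should therefore be read off directly from $F_0\cong P$ with the marking $m$ and conditions (a)--(c), rather than from the special points of $\overline{\Omega}$.
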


 \begin{proof} All statements are already shown in \ref{new4.1} and \ref{2.2}, except for the item concerning  ``restriction''. 
 The degree two map $\overline{\phi}\colon \overline{X}\rightarrow \overline{(\mathbb{P}^1,\mathbb{B})}$ induces a degree two covering
 $\psi\colon \tilde{L}\rightarrow L$  for any odd irreducible component $L$ of  $\overline{(\mathbb{P}^1,\mathbb{B})}$. 
 This $\psi$ is branched at two points which are distinct from the nodes on $L$. Hence above each node of $L$,
  there are two nodes of $\tilde{L}$. 
  
   Using the notation of Section~\ref{Section4}, one has that $F$ is defined by the choice of a special maximal tree $R\subset \overline{X}^d$. 
 Then $\tilde{L}\subset F_1\subset \overline{X}$ and $\tilde{L}$, as subspace of $F_1$, contains for every pair of conjugated
 nodes one element.  Hence, the nodes of $\tilde{L}$ which
 belong to $F_1$ have an empty intersection with their conjugates under the involution $s_{\tilde{L}}$.
 The same property holds for the preimage $\overline{F}\subset \overline{\Omega}$ of $F_1$. 
 Thus $\tilde{M}$ is restricted. \end{proof}

  \begin{theorem}\label{new5.4}  The fixed points, defined by any restricted map $M\colon \mathbb{B}\rightarrow \mathbb{P}^1$ of type $(P,m)$,
   are in good position. The branch locus of the associated Whittaker curve $X\rightarrow \mathbb{P}^1$ is also of type $(P,m)$. \end{theorem}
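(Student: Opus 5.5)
The natural route is a ping-pong argument on the Bruhat--Tits tree, or equivalently on the reduction of $\mathbb{P}^1$. We build from the configuration $(P,m)$ a fundamental domain and check that the involutions $s_i$, with fixed points $M(a_i),M(b_i)$, move it to neighbouring, non-overlapping translates.

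\emph{Step 1 (a candidate fundamental domain).} Let $\mathbb{F}=M(\mathbb{B})$ and consider the reduction $\overline{(\mathbb{P}^1,\mathbb{F})}$, whose dual graph is $P$ with the map $m$. Let $F\subset\mathbb{P}^1$ be the connected affinoid obtained as the preimage of the union of the components, minus small open disks around the nodes. Concretely, $F$ is the complement of finitely many open disks (holes) $D_e^{\pm}$, two for each even edge $e$ sitting at its two end vertices; the precise radii are fixed by the sizes of the nodes. The intended picture is that of (5.2.1)--(5.2.3): each even edge of $P$ must be ``doubled'', and the holes play the role of the missing conjugate edges.

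\emph{Step 2 (action of $s_i$ on holes).} For each $i$, the involution $s_i$ acts on the component (or chain of components) $L$ containing $\bar a_i,\bar b_i$. When $L$ is odd, $s_i$ reduces to $s_L$, the involution fixing the two odd points $p_1,p_2$. The restriction hypothesis $nodes(L)\cap s_L(nodes(L))=\emptyset$ says exactly that $s_i$ maps each hole of $F$ on $L$ to an open disk disjoint from $F$, with image contained in a different hole. Even components carry no fixed pair; vertices of type (a) with two odd edges behave like $L_p$ in (5.2.1).

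We then verify the ping-pong condition. For every $i$ there is a region $U_i$, a union of holes or of their complements, with $s_i(\mathbb{P}^1\setminus F)\cap(\mathbb{P}^1\setminus U_i)\subset U_i$, the $U_i$ pairwise disjoint, and the fixed points of $s_i$ on the boundary. This is the step I expect to be the main obstacle. One must handle pairs split over two vertices, as in (5.2.3), where $s_0$ acts along the edge $(v_0,w_0)$ as a hyperbolic-like reflection of the annulus. One must also check that the images of holes under $s_i$ nest strictly, so that radii shrink by a factor $\le|\pi|$. I would first do this locally for the three standard examples and then glue, since every configuration is a combination of them.

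\emph{Step 3 (conclusion).} The ping-pong lemma then gives $\Gamma=\langle s_0\rangle*\cdots*\langle s_g\rangle$. The strict shrinking shows that the translates $tF$ of reduced words $t$ of length $n$ lie in disks of radius $\le|\pi|^{n-1}$. Hence $\Gamma$ is discontinuous, with limit set the compact perfect intersection, so $\mathbb{F}$ is in good position.

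Moreover, $\Omega=\bigcup_\gamma\gamma F$ and $F$ is a fundamental domain in the sense of Section~\ref{Section4}. By Theorem~\ref{1.8}(3) the reduction $\overline{(\mathbb{P}^1,Branch_X)}$ is $\overline{(\Omega,\Gamma(S))}/\Gamma$, and its dual graph is the quotient of $\overline{\Omega}^d$ by $\Gamma$. By construction this quotient is the tree $\overline{F}^d=P$, with the fixed points mapped according to $m$. Therefore the branch locus is of type $(P,m)$.
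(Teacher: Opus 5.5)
Your ping-pong route differs from the paper's. The paper proves the stronger statement (**) of Proposition~\ref{5.2} by induction on $g$: it removes an end vertex carrying a pair, applies the amalgamation criterion (*) of van der Put--Voskuil with condition (c), and uses restriction only to see that the node of the removed edge is not exceptional for the smaller group, while keeping track of where exceptional points can lie. A ping-pong proof can be made to work, but yours has a genuine gap. Step~2, which you flag as the main obstacle, is the whole content of the theorem, and the set-up you give for it is wrong.

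Write $D^L_q$ for the open residue disk of a point $q$ on a component $L$. For odd $L$ and $n\in nodes(L)$, the involution acting on $L$ swaps $D^L_n$ (the disk beyond $n$, containing the rest of the tree and other fixed points) with $D^L_{s_L(n)}$. So the holes must be the disks $D^L_{s_L(n)}$, not disks at the nodes. Removing the latter cuts off the rest of the tree, and your claim that $s_i$ sends a hole on $L$ to a disk disjoint from $F$ fails for either choice. Also, even components carry no holes, so an even edge contributes one hole at each odd endpoint, not two in general.

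A vertex of type (a) with two odd edges does not behave like $L_p$ in (5.2.1). $L_p$ is even, whereas such a vertex is odd and lies on the path of odd edges from $m(a_i)$ to $m(b_i)$ of a split pair. It is stabilized by $s_i$, which acts on it as $s_L$, and it is exactly a place where restriction must be used, as for $L_2$ in the example after (5.2.3). Your stated ping-pong condition collapses to $s_i(\mathbb{P}^1\setminus F)\subset U_i$, and the fixed points of $s_i$ lie inside $F$, not on a boundary. Finally, ``check the three standard examples and glue'' is not an argument: the gluing is precisely what the induction with (*) accomplishes in the paper.

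To close the gap, call the axis of $s_i$ either the vertex $m(a_i)=m(b_i)$ or the path of odd edges joining $m(a_i)$ and $m(b_i)$. Every odd vertex lies on exactly one axis, and on an axis component $s_i$ reduces to $s_L$. Put $X_i=\bigcup D^L_{s_L(n)}$, the union over components $L$ on the axis of $s_i$ and $n\in nodes(L)$. Restriction says precisely that each $s_L(n)$ is neither a node nor an image of $\mathbb{B}$, so the $X_i$ are pairwise disjoint unions of open disks.

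For $j\neq i$, the axis of $s_j$ lies beyond an even edge leaving the axis of $s_i$ at some component $L$. Hence $X_j\subset D^L_n$ and $s_i(X_j)\subset D^L_{s_L(n)}\subset X_i$. Since $g\geq 2$ there are at least three sets, so ping-pong gives the free product even though all factors have order two. Moreover $X_j$ lies in the closed disk cut out by the annulus of $n$, so iterates shrink by the node sizes. This gives discontinuity, with $F=\mathbb{P}^1\setminus\bigcup_i X_i$.

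With this in place, your derivation of the second assertion from Theorem~\ref{1.8}(3) goes through once you check that $\Gamma(S)\setminus S\subset\bigcup_i X_i$. Then the reduction of $\Omega$ over $F$ is $\overline{(\mathbb{P}^1,M(\mathbb{B}))}$ with the points $s_L(n)$ turned into nodes, and $\overline{F}^d=P$.
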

  {\noindent  The proof is postponed until Proposition~\ref{5.2}}.

  \begin{remarks}\label{new4.9} {\it The map $FB\colon Fix_{P,m}\rightarrow Branch_{P,m}$.}\\
 (1).  $FB$ denotes the map that associates to any set of restricted fixed points of type $(P,m)$,  the branch points of the
  corresponding Whittaker curve.
  
   That {\it $FB$ is a morphism of rigid space} will be apparent from the explicit form of $FB$
  in terms of theta functions, see Section~\ref{Section6}.

   By Proposition \ref{2.6}, the choice of $M\in Branch_{P,m}$ and fundamental domain $F$, determines   
  an element, say, $(M,F)\in Fix_{P,m}$ with $FB(M,F)=M$.  In particular, $FB$ is surjective.  We want to show that for any $N\in Fix_{P,m}$
  with image $M$, there is a fundamental domain $F$ with $N=(M,F)$. \\
  (2). For notational convenience we introduce the following notion.\\
  A {\it (pointed) $\mathbb{B}$-tree over $k$} is a tree $T$ of projective lines over the residue field $k$, intersecting in ordinary nodes.
   In addition, a map is given from $\mathbb{B}:=\{a_0,b_0,\dots ,a_g,b_g\}$ to $T(k)\setminus \{nodes \}$ such that the dual graph
    satisfies the properties of Definitions~\ref{2.4}.
   Every element $M\in Branch_{P,m}$ or $M\in Fix_{P,m}$ induces a pointed $\mathbb{B}$-tree, namely 
   $\overline{(\mathbb{P}^1,M(\mathbb{B}))}$.\\
   (3). Let $X$ be a Whittaker curve, having $(P,m)$ as configuration of the  branch locus. Let $\overline{X}$ denote, as before, the minimal 
   semi-stable reduction that separates the ramification points of $X\rightarrow \mathbb{P}^1$. A fundamental domain $F$ corresponds to a choice of a special
    maximal subtree $R$ of $\overline{X}^d$. By deleting the nodes and the components of $\overline{X}$ prescribed by $R$, one obtains
    a restricted pointed $\mathbb{B}$-tree of a type $(P,m)$.  One can verify: \\   
   \indent   {\it Every restricted (pointed) $\mathbb{B}$-trees of type $(P,m)$ is obtained in this way from a fundamental domain which
      is unique up to conjugation.       }\\  
    (4). The elements of  $Branch_{P,m}$ and $Fix_{M,p}$ can be represented by tuples $\{a_0,b_0,a_1,b_1\dots ,a_g,b_g\}$
  normalized, for instance, by $a_0=0, b_0=\infty, a_1=1$.
    \end{remarks}

 \begin{proposition} 
 {\rm  (1)}. Let $M_1,M_2\in Fix_{P,m}$ define the same Whittaker group $\Gamma$ and the same pointed $\mathbb{B}$-tree. Then 
$M_1=M_2$.\\
 {\rm (2)}.   Consider $M\in Fix_{P,m}$ with image $N\in Branch_{P,m}$.
 There exists a fundamental domain $F$, unique up to conjugation by $\sigma$, such that the fixed points  $(N,F)$ 
 coincides with $M$.   
  \end{proposition}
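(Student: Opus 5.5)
Both parts reduce to one observation. The pointed $\mathbb{B}$-tree determines which elements of $\Gamma$ are the generators of order two. It also determines the pairs of fixed points, and hence the fixed points themselves.

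For (1), let $M_1,M_2\in Fix_{P,m}$ give the same group $\Gamma$, with generators $s_i^{(1)}$ and $s_i^{(2)}$ of order two. By Remarks~\ref{new4.4}(4), each $s_i^{(2)}$ is $\Gamma$-conjugate to a unique $s_j^{(1)}$. The labelling of the pairs by $\mathbb{B}$ forces $j=i$, since conjugate involutions have fixed points mapping to the same pair of branch points. So $s_i^{(2)}=\gamma_i s_i^{(1)}\gamma_i^{-1}$ for certain $\gamma_i\in\Gamma$.

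By Theorem~\ref{new5.4} (Proposition~\ref{5.2}), the fixed points $M_1(\mathbb{B})$ lie in a fundamental domain $F^{(1)}$. More precisely, Remarks~\ref{new4.9}(3) gives a fundamental domain $F^{(1)}$ whose associated restricted pointed $\mathbb{B}$-tree is the tree of $M_1$. Similarly we get $F^{(2)}$ for $M_2$. Both domains induce the same pointed $\mathbb{B}$-tree.

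In the dual tree $\overline{\Omega}^d$, the subtrees $F^{(1)}_0$ and $F^{(2)}_0$ correspond to special maximal subtrees $R_1,R_2\subset\overline{X}^d$. By Remarks~\ref{new4.9}(3), the restricted pointed tree determines $R$ up to $\sigma$, so $R_1=R_2$ or $R_2=\sigma R_1$. In the first case $F^{(2)}_0=wF^{(1)}_0$ for some $w\in W$. In the second case $F^{(2)}_0=w s F^{(1)}_0$. In either case $F^{(2)}=\gamma F^{(1)}$ with $\gamma\in\Gamma$.

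Then $M_2=\gamma\circ M_1$, with $\gamma\in PGL_2(K)$. This holds because the fixed points in $\gamma F^{(1)}$ are the $\gamma$-translates of those in $F^{(1)}$ (Remarks~\ref{new4.1}(2)), and the labelling is compatible. Since elements of $Fix_{P,m}$ are taken modulo $PGL_2$, we get $M_1=M_2$. The point needing care is that the identification of the fixed points inside $F$ with $\mathbb{B}$ is determined by the pointed tree alone. I would check this on the three standard Examples~\ref{2.3}, where the vertex $m(a_i)$ or $m(b_i)$ pins down which translate of each $s_i$ has its fixed points in $F$.

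For (2), take $M\in Fix_{P,m}$. By Theorem~\ref{new5.4} its fixed points are in good position and define a Whittaker group $\Gamma$ and a curve $X$ with branch locus $N=FB(M)$ of type $(P,m)$. The restricted pointed $\mathbb{B}$-tree $\overline{(\mathbb{P}^1,M(\mathbb{B}))}$ is, by Remarks~\ref{new4.9}(3), obtained from a fundamental domain $F$ of $X$ that is unique up to conjugation.

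The main obstacle is to show that $M(\mathbb{B})\subset\Omega$ actually lies in this $F$ and realises that tree. To do this, I would show directly that the affinoid $F_M:=Red^{-1}$ (of the tree of $M$ with the appropriate nodes removed) satisfies the defining property of a fundamental domain. First, its $\Gamma$-translates cover $\Omega$. Second, $\gamma F_M\cap F_M\neq\emptyset$ only for $\gamma\in\{1,s_0,\dots,s_g\}$, by the ping-pong argument used to prove good position in Proposition~\ref{5.2}. Its dual graph then maps isomorphically to a special maximal subtree $R\subset\overline{X}^d$, so $F_M$ is a fundamental domain $F$. The fixed points in it are exactly $M(\mathbb{B})$, so $(N,F)=M$.

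Uniqueness up to $\sigma$ follows from part (1) together with the fact that $F$ and $F^{opp}$ give the same pointed tree, while any other choice of $R$ gives a different one.
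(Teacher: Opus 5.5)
Your argument for (1) is circular as written. The step ``the fixed points $M_1(\mathbb{B})$ lie in a fundamental domain $F^{(1)}$ realising the pointed tree of $M_1$'' is not supplied by either result you cite.
\begin{itemize}
\item Theorem~\ref{new5.4} only gives good position and the type of the branch locus.
\item Remarks~\ref{new4.9}(3) only produces \emph{some} fundamental domain $F$ whose restricted pointed $\mathbb{B}$-tree is that of $M_1$.
\end{itemize}
Whether the fixed points in that $F$ are $M_1(\mathbb{B})$, and not another system of fixed points of involutions of $\Gamma$ with the same pointed tree, is exactly what (1) must decide. It is also the existence half of (2), which you yourself call the main obstacle.

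Your proposed way around it, proving directly that $F_M$ is a fundamental domain, is not carried out. $F_M$ is only vaguely defined. The two properties you need are the entire difficulty: that the $\Gamma$-translates of $F_M$ cover $\Omega$, and that $\gamma F_M\cap F_M\neq\emptyset$ forces $\gamma\in\{1,s_0,\dots,s_g\}$. Proposition~\ref{5.2} contains no ping-pong argument to borrow. It is an induction through the realization criterion (*) that only tracks exceptional points. So both parts of your proposal rest on an unproved claim.

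The missing idea is that (1) can be proved locally, one generator at a time, with no fundamental domain at all. The paper does this first and then deduces (2) from (1), the reverse of your order.

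Let $s_i,S_i$ be the involutions attached to pair $i$ by $M_1,M_2$, with fixed points $\{a_i,b_i\}$ and $\{A_i,B_i\}$. They are conjugate in $\Gamma$. Because the pointed trees coincide, $a_i$ and $A_i$ have the same image in the common reduction, and so do $b_i$ and $B_i$.
\begin{itemize}
\item If the four points are distinct, the reduction of $\{a_i,b_i,A_i,B_i\}$ is two lines, one carrying $a_i,A_i$ and the other $b_i,B_i$. This is the interlaced position excluded in Lemma~\ref{4.1}. Concretely, $s_iS_i\in W\setminus\{1\}$ is then not loxodromic, contradicting that $W$ is a Schottky group.
\item If exactly one fixed point is shared, then $s_iS_i\in W\setminus\{1\}$ fixes a point of $\Omega$, which is again impossible.
\end{itemize}
Hence $s_i=S_i$ for all $i$, and $M_1=M_2$.

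Part (2) is then immediate and your obstacle disappears. Theorem~\ref{new5.4} gives the Whittaker curve with branch locus $N$ and group $\Gamma$. Remarks~\ref{new4.9}(3) gives a fundamental domain $F$, unique up to $\sigma$, whose pointed tree is that of $M$. Now $M$ and $(N,F)$ define the same $\Gamma$ and the same pointed tree, so they coincide by (1).
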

 \begin{proof} (1). Let $\{a_0,b_0,\dots, a_g,b_g\}$ be the fixed points for $M_1$, and  $\{A_0,B_0,\dots ,A_g,B_g\}$ those for $M_2$.
 By normalization $a_0=A_0=0$, $b_0=B_0=\infty$, $a_1=A_1=1$. Consider, for $i=0,\dots ,g$, the involutions $s_i$ and $S_i$ with fixed points 
 $a_i,b_i$ and $A_i,B_i$. We may suppose that $s_i$ and $S_i$ are conjugated. Assume that $\#\{a_i,b_i,A_i,B_i\}=4$. Then 
 $\overline{(\mathbb{P}^1, M_1(\mathbb{B}))}=\overline{(\mathbb{P}^1, M_2(\mathbb{B}))}$ implies that the reduction
 $\overline{(\mathbb{P}^1, \{a_i,b_i,A_i,B_i\})}$ has two components $L_1,L_2$ and the images of $a_i,A_i$ are on $L_1$ and 
 those of $b_i,B_i$ are on $L_2$.  This implies that the group $<s_i,S_i>$, generated by $s_i$ and $S_i$, is not discontinuous.
 This contradicts that $<s_i,S_i>$ is a subgroup of $\Gamma$.
 
 Also $A_i=a_i$ and $B_i\neq b_i$ is not possible since $s_iS_i$ has $a_i=A_i\in \Omega$ as fixed point and $s_iS_i$ lies
 in the subgroup $W\subset \Gamma$  of index two. Indeed, the  elements of $W\setminus \{1\}$ have their fixed points outside $\Omega$.
The same reasoning holds in the case $A_i\neq a_i$, $B_i=b_i$.
 We conclude that  $s_i=S_i$ for all $i$ and $M_1=M_2$.\\
 
 \noindent   (2). By Theorem~\ref{new5.4}, $M$ has good position and produces a branch locus $N$ of the same configuration $(P,m)$. 
 By Remarks~\ref{new4.9} part (3), there is a fundamental domain $F$, unique up to conjugation, such that the combination with $N$ produces 
 a $M_1\in Fix_{P,m}$ with the same pointed $\mathbb{B}$-tree as $M$. This includes the maps from 
 $\mathbb{B}$ to the trees. Indeed, the images of the points of $\mathbb{B}$ on the trees of $M$ and $M_1$ are prescribed by the images
 of the ramification points on $\overline{X}$. 
 
 Since $M$ and $M_1$ define the same Whittaker group $\Gamma$, we have $M=M_1$ according to (1).  \end{proof}
  
\begin{corollary} \label{5.7} For a given configuration $(P,m)$, the number of fundamental domains is $2^d$ for some $d\geq 1$.
Every fibre of $FB\colon Fix_{M,p}\rightarrow Branch _{P,m}$ has $2^{d-1}$ elements.
\end{corollary}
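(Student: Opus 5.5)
The plan is to count fundamental domains combinatorially, via the special maximal subtrees $R\subset\overline{X}^d$ of Section~\ref{Section4}, and then feed this count into the preceding Proposition. By Remarks~\ref{new4.4}(3), $\overline{X}^d$ is obtained from $P=\overline{(\mathbb{P}^1,\mathbb{B})}^d$ by doubling every even edge and every even vertex. A special subtree $R$ is determined by choosing, for each $\sigma$-conjugate pair of edges $\{e,\tilde e\}$ and each $\sigma$-conjugate pair of vertices $\{v,\tilde v\}$, which member to keep. Not every choice gives a connected subgraph, however. An even vertex together with its (necessarily even) edges forms a doubled star, and the kept edges must be incident to the kept copy of the vertex.

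So the independent binary choices are the following: one for each maximal connected block of even vertices and even edges (a connected ``even region'' is doubled as a whole, with the choice of sheet forced along it), and one for each even edge that is not adjacent to an even vertex. The first step is to verify that, once these choices are made, $R$ is connected and maps isomorphically onto $P$. Let $d$ be the number of these independent binary choices; then the number of special subtrees, hence of fundamental domains $F$, is $2^d$. I expect that in all three standard Examples~\ref{2.3} this gives $d$ equal to $1$, $g$, and $g$ respectively, matching the count $2^g$ of (5.2.2)($\ell$).

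To see $d\geq1$, note that $g\geq 1$ forces $\overline{X}^d$ to have positive first Betti number $g$, so something is doubled and at least one choice exists. Moreover, $\sigma$ acts freely on the set of special subtrees: $\sigma R\neq R$ because $R$ omits one element of each doubled pair while $\sigma$ swaps the two.

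For the fibres, take $N\in Branch_{P,m}$ with Whittaker curve $X$; the fibre $FB^{-1}(N)$ is the set of $M\in Fix_{P,m}$ with $FB(M)=N$. Part (2) of the preceding Proposition shows that every such $M$ equals $(N,F)$ for some $F$, unique up to $\sigma$. Conversely, by Proposition~\ref{2.6} and Remarks~\ref{new4.9}(1), every $F$ yields $(N,F)\in Fix_{P,m}$ mapping to $N$. Hence $F\mapsto (N,F)$ is a surjection from the $2^d$ fundamental domains onto the fibre.

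It remains to show that $(N,F)=(N,F')$ exactly when $F'\in\{F,F^{opp}\}$, in $Fix_{P,m}$, that is, modulo $PGL_2$. The generators from $F^{opp}$ are the $s$-conjugates of those from $F$ (Example (5.2.2)($\ell$)), so the fixed-point tuples differ by the element $s\in PGL_2$ and coincide in $Fix_{P,m}$. Conversely, suppose $(N,F)=(N,F')$ after applying some $g\in PGL_2$. Then $g$ normalises $\Gamma$ and induces an automorphism of $X$ over $\mathbb{P}^1$ fixing the branch points, hence $g\in\Gamma$ or $g$ acts as $\sigma$. In either case it carries $F$ to $F'$ up to $W$, giving $F'=F$ or $F'=F^{opp}$. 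Thus each fibre has $2^d/2=2^{d-1}$ elements.

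The main obstacle is this last injectivity step: showing that a $PGL_2$-identification of two fixed-point tuples forces the corresponding fundamental domains to agree up to $\sigma$. I would handle it with the uniqueness argument of part (1) of the preceding Proposition, namely that conjugate involutions with fixed points in the same pointed $\mathbb{B}$-tree coincide. Applied to the generators $s_i$ and $s_i'$, this gives agreement of the pointed $\mathbb{B}$-trees, and the pointed $\mathbb{B}$-tree determines $R$ up to $\sigma$ by Remarks~\ref{new4.9}(3).
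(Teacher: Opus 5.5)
Your proposal is correct. The fibre count is essentially the paper's argument: the paper also maps fundamental domains onto the fibre via Proposition~\ref{2.6} and the Proposition preceding the corollary. It identifies two fundamental domains exactly when $R'=R$ or $R'=\sigma R$ by appealing to Remarks~\ref{new4.9}(3), as your last paragraph does. You do not need part (1) of the Proposition there: equality in $Fix_{P,m}$ gives equal pointed $\mathbb{B}$-trees directly, since these are $PGL_2$-invariants of the tuple.

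The genuine difference is the count $2^d$. The paper treats each $\sigma$-conjugate pair of edges and of vertices as an independent binary choice, and in the proof of Theorem~\ref{new5.8} it sets $d$ equal to the number of such pairs. This ignores the requirement that $R$ be connected. It also contradicts the paper's own (5.2.1) and \S\ref{5.3.1}, where there are $g+2$ pairs but only the two fundamental domains $F_0,\sigma F_0$, and Proposition~\ref{9.1}, which needs $d=1$.

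Your $d$, the number of connected components of the doubled locus, is the right one, and it agrees with all the genus $2$ and $3$ computations. State it precisely: there is one component for each component of the subforest spanned by the even vertices (each taken with its incident even edges), plus one for each even edge whose endpoints are both odd. An odd vertex does not propagate the sheet choice, so two even regions meeting only at an odd vertex give two independent choices. ``Blocks of even vertices and even edges'' must not be read as glued through odd endpoints.

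Your normalizer route to injectivity can also be finished without relying on the unproved assertion in Remarks~\ref{new4.9}(3). This fills your unjustified step ``it carries $F$ to $F'$ up to $W$''. Once $g\in\Gamma$, the subtrees $gF_0$ and $F_0'$ of $\overline{\Omega}^d$ both project isomorphically onto $P$. They contain the same lift of every vertex carrying branch points. Every leaf of $P$ is of type (c), and a finite subtree of a tree is the convex hull of its leaves. Hence $gF_0=F_0'$, and so $R'\in\{R,\sigma R\}$.
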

\begin{proof} The fundamental domains correspond to suitable subtrees $R\subset \overline{X}^d$. This $R$ is obtained
by deleting for each pair of conjugated edges and each pair of conjugated vertices, one of the two elements. Thus there are
$2^d$ possibilities for fundamental domains.  Moreover, fundamental domains $F_1, F_2$ are conjugated w.r.t. 
${\rm PGL}_2$ if and only if  the corresponding 
$R_1,R_2\subset \overline{X}^d$ satisfy $R_1=R_2$ or $R_1=\sigma R_2$.  Now Remarks~\ref{new4.9} part (3)
finishes the proof.   \end{proof}

\begin{theorem}\label{new5.8} The map $FB\colon Fix_{P,m}\rightarrow Branch_{P,m}$ is a rigid analytic \'etale, Galois covering with
Galois group $\{\pm 1\}^{d-1}$ with $d\geq 1$. Here $d$ is defined by the number of fundamental domains for $(P,m)$ is
$2^d$.
\end{theorem}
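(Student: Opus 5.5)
The plan is to combine the fibre count of Corollary~\ref{5.7} with three further facts: that $FB$ is a local isomorphism, and that the group $\{\pm 1\}^{d-1}$ acts on $Fix_{P,m}$ analytically, freely and transitively on fibres, over $Branch_{P,m}$.

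\emph{Construction of the action.} For a given $M\in Branch_{P,m}$ the fundamental domains correspond to the special maximal subtrees $R\subset\overline{X}^d$. Such an $R$ is determined by choosing, for each of the $d$ pairs of $\sigma$-conjugated edges and vertices, one element of the pair. Hence $\{\pm1\}^d$ acts simply transitively on these choices, by flipping the choice in the $j$-th pair. The diagonal element $(-1,\dots,-1)$ sends $R$ to $\sigma R$, which yields the same point of $Fix_{P,m}$ (conjugation by $s$). By Remarks~\ref{new4.9}(3) and the Proposition preceding Corollary~\ref{5.7}, the quotient $G:=\{\pm1\}^d/\langle(-1,\dots,-1)\rangle\cong\{\pm1\}^{d-1}$ therefore acts simply transitively on each fibre $FB^{-1}(M)$.

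To make this action a global analytic automorphism of $Fix_{P,m}$, I would express it group-theoretically. Flipping the $j$-th choice replaces the generators $s_i$ by conjugates $\gamma s_i\gamma^{-1}$ for explicit words $\gamma$, as in example (5.2.2)(k), where $s_i^*=s_0s_is_0$. The new fixed points are then $\gamma(a_i),\gamma(b_i)$. The combinatorial recipe for these words depends only on $(P,m)$, not on the particular point of $Fix_{P,m}$. Since the entries of $\gamma\in PGL_2$ are polynomial in the normalized coordinates of the $a_i,b_i$, each element of $G$ becomes a rigid analytic automorphism of $Fix_{P,m}$ commuting with $FB$. Freeness on fibres then gives a free action. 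I would verify that the recipe is uniform over the three standard configurations (5.2.1)--(5.2.3) and then combine them, as Remarks~\ref{new4.4}(1) suggests.

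\emph{Étaleness.} The map $FB$ is analytic by the theta-function formula (Remarks~\ref{new4.9}(1), to be established in Section~\ref{Section6}). To show it is étale, I would construct a local analytic inverse. Near $M_0=FB(N_0)$, choose the fundamental domain $F$ corresponding to $N_0$. The reduction type $(P,m)$ is constant on $Branch_{P,m}$, so the combinatorics of $\overline{X}$, of $R$ and of $F$ is constant there. The Whittaker curve $X_M$ and its uniformization vary analytically with $M$: this uses Theorem~\ref{1.7} together with the analytic dependence of the Schottky group, for example via the gluing of the affinoid pieces $X(v)$ and $X(e)$ of Proposition~\ref{2.1}. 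Then $M\mapsto(M,F)$ is an analytic section through $N_0$. With the free transitive $G$-action on fibres, this exhibits $FB^{-1}(U)\cong U\times G$ locally, so $FB$ is a finite étale Galois covering with group $\{\pm1\}^{d-1}$.

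The main obstacle is this last step. Making the analytic dependence of the fixed points on the branch locus rigorous requires either a relative version of the embedding in Theorem~\ref{1.4} over the base $Branch_{P,m}$, or an inverse function argument. For the latter, I would first try to show that the Jacobian of the explicit theta formula for $FB$ is invertible, using its approximation by a product of cross-ratio factors whose leading term is the identity up to units.
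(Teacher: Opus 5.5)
The fibrewise part of your argument matches the paper's proof. The paper identifies $FB^{-1}(M)$ with the special maximal trees $R\subset\overline{X}^d$ modulo $R\sim\sigma R$ (Corollary~\ref{5.7}), reads off $\{\pm1\}^{d-1}$ and takes analyticity from the theta formulas. It says nothing further. Your globalisation of the action by conjugating words, as in (5.2.2)(k), is a reasonable addition.

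\textbf{Flips are per block, not per pair.} ``Flipping the choice in the $j$-th pair'' is wrong as soon as $P$ has an even vertex. The lift of an even edge determines the lifts of its even end points, so choices are linked through even vertices.
\begin{itemize}
\item In the closed-disk case (5.2.1) there are $g+2$ conjugate pairs, $\{p,\tilde p\}$ and $\{e_i,\tilde e_i\}$.
\item Flipping only $\{e_1,\tilde e_1\}$ keeps $\tilde e_1$ while $\tilde p$ is deleted, so the result is not a subgraph.
\item The only admissible trees are $T$ and $\sigma T$, so $d=1$ (compare \S\ref{5.3.1} and Proposition~\ref{9.1}).
\end{itemize}
The admissible $R$ form a torsor under $\{\pm1\}^{d}$, where $d$ is the number of connected components of the union of the even vertices and the open even edges of $P$. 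Your flips, and the conjugating words, must be indexed by these components. The paper's closing sentence ``$d$ is the number of the conjugation pairs'' makes the same slip, as does the proof of Corollary~\ref{5.7}. The theorem survives only because $d$ is defined by the count $2^d$.

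\textbf{The \'etale step is a genuine gap.} The paper does not close it either. It gives no relative version of Theorem~\ref{1.4} that would produce your section $M\mapsto(M,F)$. \'Etaleness is visible only in the case-by-case approximations of Sections~\ref{explicit1} and \ref{explicit2}.

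Those computations also show that your Jacobian heuristic cannot be right as stated. The $(1+\epsilon)$-approximations hold uniformly on $Fix_{P,m}$. A leading term $x_j\mapsto c_jx_j$ would give a single initial solution, hence a single preimage by the refinement argument in the proof of Proposition~\ref{Prop8.3}. That contradicts fibres of size $2^{d-1}$ when $d>1$.

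What actually occurs is a monomial leading term that squares exactly $d-1$ coordinates:
\begin{itemize}
\item $B_1\mapsto B_1^2$ in configuration (c);
\item $T_i\mapsto T_i^2$ in $(b,g=3)$;
\item $B_2\mapsto B_2^2$ and $A\mapsto A^2$ in configuration (4).
\end{itemize}
The Jacobian is then a unit times $\prod_j 2x_j$. It is nonvanishing on $Fix_{P,m}$ because $x_j\neq 0$ there and $|2|=1$. The deck transformations are, to leading order, sign changes of these coordinates. In $(b,g=3)$ this is exact: it is $(T_i,B_i)\mapsto(-T_i,-B_i)$, i.e.\ $s_i\mapsto s_0s_is_0$ with $s_0(z)=-z$.

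To finish along your lines you would have to prove this monomial shape for a general $(P,m)$. You would also need to control the derivatives of the error factors $1+\epsilon$. Alternatively, you would need the analytic dependence of the uniformization on $M$ that your local-section route presupposes.
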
 

\begin{proof} That $Branch_{P,m}$ and $Fix_{P,m}$ have a natural structure as connected rigid space is made explicit in
Examples \ref{2.3}.  The analyticity of $FB$ follows from the explicit formulas in terms of theta function, see Sections~\ref{Section6}, \ref{explicit1}, \ref{explicit2}.
According to Corollary~\ref{5.7}, any fibre of $FB$ consists of the possibilities for the special maximal trees $R\subset \overline{X}^d$, 
obtained by deleting for each pair of conjugated edges or conjugated vertices, one of the two elements. Two special maximal trees
$R_1,R_2$ are identified if $R_1=R_2$ or $R_1=\sigma R_2$. There is clearly a Galois group of the form $\{\pm 1\}^{d-1}$ acting 
on the set of special maximal trees. In fact, $d$ is the number of the conjugation pairs of edges and vertices.  \end{proof}

\subsection{\rm  Examples of configurations $(P,m)$, the moduli spaces 
 $Fix_{P,m}$,    $Branch_{P,m}$\\ and the morphism $FB$.}

\begin{figure}[ht]\label{picture2}
\centering

\tikzset{
    pt/.style={circle,fill,inner sep=1.5pt}
}

\begin{subfigure}{0.32\textwidth}
\centering
\begin{tikzpicture}

\draw (-2,2) -- (-2,-2);
\draw (0,2) -- (0,-2);
\draw (2,2) -- (2,-2);

\draw (-3,-1) -- (3,-1);

\node[pt] (a0) at (-2,1) {};
\node[pt] (b0) at (-2,0.2) {};

\node[pt] (a1) at (0,1) {};
\node[pt] (b1) at (0,0.2) {};

\node[pt] (a2) at (2,1) {};
\node[pt] (b2) at (2,0.2) {};

\node[left=2mm of a0] {$a_0$};
\node[left=2mm of b0] {$b_0$};

\node[left=2mm of a1] {$a_1$};
\node[left=2mm of b1] {$b_1$};

\node[left=2mm of a2] {$a_2$};
\node[left=2mm of b2] {$b_2$};

\node at (-2.5,-1.6) {(a)};

\end{tikzpicture}
\end{subfigure}
\hfill

\begin{subfigure}{0.32\textwidth}
\centering
\begin{tikzpicture}

\draw (-2,2) -- (-2,-2);
\draw (2,2) -- (2,-2);

\draw (-3,-1) -- (3,-1);

\node[pt] (a0) at (-2,1) {};
\node[pt] (b0) at (-2,0.2) {};

\node[pt] (a2) at (2,1) {};
\node[pt] (b2) at (2,0.2) {};

\node[pt] (a1) at (-0.5,-1) {};
\node[pt] (b1) at (0.5,-1) {};

\node[left=2mm of a0] {$a_0$};
\node[left=2mm of b0] {$b_0$};

\node[right=2mm of a2] {$a_2$};
\node[right=2mm of b2] {$b_2$};

\node[below=2mm of a1] {$a_1$};
\node[below=2mm of b1] {$b_1$};

\node at (-2.5,-1.6) {(b)};

\end{tikzpicture}
\end{subfigure}
\hfill
\begin{subfigure}{0.32\textwidth}
\centering
\begin{tikzpicture}

\draw (-2,2) -- (-2,-2);
\draw (2,2) -- (2,-2);

\node[pt] (a0) at (-2,1) {};
\node[pt] (b0) at (-2,0.2) {};

\node[pt] (a2) at (2,1) {};
\node[pt] (b2) at (2,0.3) {};

\node[pt] (b1) at (-1.15,-1.1) {};
\node[pt] (a1) at (1.4,-1.1) {};

\draw (-2.6,-0.2) -- (0.6,-2.2);
\draw (-0.3,-2.4) -- (2.6,-0.2);

\node at (-2.35,1.0) {$a_0$};
\node at (-2.35,0.2) {$b_0$};

\node at (2.35,1.0) {$a_2$};
\node at (2.35,0.3) {$b_2$};

\node at (-1,-0.75) {$b_1$};
\node at (1.3,-0.75) {$a_1$};

\node at (-2.5,-1.6) {(c)};

\end{tikzpicture}
\end{subfigure}

\caption{The three configurations for fixed points for Whittaker curves of genus 2, 
here represented as pointed $\mathbb{B}$-trees, and their generalizations of higher genus
 are studied in detail in the Subsections~\ref{5.3.1}--\ref{5.3.3}.}
\end{figure}

 \subsubsection{\rm Case (a)}\label{5.3.1}
    This case, already treated in Example~\ref{example5.2.1}, is  an example of the ``closed disk condition'' for $\Gamma$. The latter property  is defined by: \\
     $\Gamma$ is generated by involutions $s_0,\dots ,s_g$ with fixed points $\{a_0,b_0\},\dots , \{a_g,b_g\}$ and 
     there are disjoint  closed disks $D_0,\dots, D_g$ such that $a_i,b_i\in D_i$ for all $i$.  
     
     In terms of configuration $(P,m)$ this property translates into:\\
     the end vertices of the tree $P$ are $v_0,\dots ,v_g$ and $m(a_i)=m(b_i)=v_i$ for all $i$. \\
     
     In this  ``closed disk case'', $\overline{X}^d$ is obtained from $\overline{(\mathbb{P}^1,\mathbb{B})}^d$ by doubling every edge and every vertex
     which is not an end vertex.  There are only two possibilities for a fundamental domain, namely $F_0$ and $\sigma F_0$. 
     
     Here $F_0$ is obtained  by deleting the end vertices and end edges of $\overline{(\mathbb{P}^1,\mathbb{B})}^d$ and  taking
     the trivial degree two covering.  Next one omits one of the two connected components and glues  back the end vertices.  
     This produces a presentation of $\Gamma$  by independent elements $s_0,\dots ,s_g$ of order two. This presentation is unique up
     to simultaneous conjugation. 
     
     Finally,  the condition ``restricted'' pointed $\mathbb{B}$-tree is  no restriction at all.   \hfill $\square$

   \subsubsection{\rm Case (b)} A more general configuration is studied in detail in (5.2.2).
   We continue the special case of  (5.2.2), namely the  configuration $(P,m)$ with the set of vertices  $V=\{v_0,v_1,\dots ,v_g\}$, edges
  $E=\{ (v_0,v_i)\ |\ i=1,\dots ,g\}$ and $m (a_i), m (b_i)\in v_i$ for $i=0,\dots ,g$.
  The Whittaker curve $X$ has reduction $\overline{X}$ with  components
  $\tilde{L}_i, \ i=0,\dots ,g$. Here  $\tilde{L}_i\rightarrow L_i$ is the degree two covering with branch points 
  the images of $a_i,b_i$.
  
  Choose the parameter $z$ for $L_0\cong \mathbb{P}^1_k$ such that $a_0,b_0$ are mapped to $z=0,\infty$
  on $L_0$. The nodes $e_1=L_0\cap L_1,\dots , e_g=L_0\cap L_g$ on $L_0$ have coordinates $t_1,\dots ,t_g$.  For $\tilde{L}_0$ we take a parameter
  $u$ with $u^2=z$. The nodes on $\tilde{L}_0$ have coordinates $u_1, -u_1,\dots ,u_g,-u_g$ with $u_i^2=t_i$.
  Then $\tilde{L}_i\cap \tilde{L}_0=\{ u_i,-u_i\}$ for $i=1,\dots ,g$.  We  normalize further by taking $t_1=1$ and $u_1=1$.
  The $t_i$ are in $k$ but, in general, $u_i\not \in k$ for $i>1$. This illustrates Remarks~\ref{Rem3.2}.\\
     
     The  hyperelliptic involution $\sigma$ acts on $\overline{X}$ as follows. Each $\tilde{L}_i$ is invariant
     under $\sigma$. Further, the images of $a_i,b_i$ are the fixed points of the restriction of $\sigma$ to $\tilde{L}_i$.
     In particular, $\sigma$ permutes each pair $\{u_i,-u_i\}$. \\
     
     A {\it fundamental domain $F\subset \Omega$} is determined by its reduction $\overline{F}\subset  \overline{\Omega}$. The latter is derived from 
     $\overline{X}\setminus \{ n_1,\dots , n_g \}$, where, for each $i$, $n_i$ is  of the two nodes of $\overline{X}$ lying over the node $L_0\cap L_i$.
     The preimage under the universal covering $\overline{\Omega}\rightarrow \overline{X}$ of  $\overline{X}\setminus \{ n_1,\dots , n_g \}$ is a disjoint   union of copies of $\overline{F}$. \\
     
        Thus $\overline{F}\cong \tilde{L}_0\cup \tilde{L}_1\cup \dots \cup \tilde{L}_g \setminus \{n_1,\dots ,n_g\}$ and, due to the
     choice of the omitted $\{n_i\}$, it  is a restricted $\mathbb{B}$-tree with configuration $(P,m)$.\\

      Fix a choice of the $n_i$. This produces pairs of fixed points and corresponding order two elements 
      $s_0,s_1,\dots ,s_g$ with $\Gamma =<s_0>*<s_1>*\cdots *<s_g>$.  Consider another choice $\{\tilde{n}_1,\dots ,\tilde{n}_g\}$ of the
      omitted nodes. This yields a fundamental domain $\tilde{F}$, fixed points, order two elements $\tilde{s}_i$ such that 
       $\Gamma =<\tilde{s}_0>*<\tilde{s}_1>*\cdots *<\tilde{s}_g>$. A computation (using  pictures)  shows that  
     \[ \tilde{s}_0=s_0, \mbox{ and for i}\neq 0\ \ \tilde{s}_i=s_i \mbox{ if }n_i=\tilde{n}_i\mbox{ and }\tilde{s}_i=s_0s_is_0^{-1}
      \mbox{ if } n_i\neq \tilde{n}_i.\]

 \subsubsection{\rm Case (c)}\label{5.3.3} This continues the  third example (5.2.3) with a slightly different notation. 
 The description of the configuration $(P,m)$ uses the notations:\\
  $\mathbb{B}=\{x_0,y_0\}\cup \{a_i,b_i\}_{i=1}^n\cup \{c_i,d_i\}_{i=1}^m$. The components of 
  $\overline{(\mathbb{P}^1_K,\mathbb{B})}$ are $L_0,M_0$,  $A_1,\dots ,A_n,C_1,\dots ,C_m$.
  The nodes are $\{A_i\cap L_0\}_{i=1}^n, L_0\cap M_0, \{M_0\cap  C_i\}_{i=1}^m$ and the $m$-images
  of $\mathbb{B}$ are $a_i,b_i\mapsto A_i$, $x_0\mapsto L_0, y_0\mapsto M_0$, $c_i,d_i\mapsto C_i$.\\
  
  The components of $\overline{X}$ are $\tilde{L}_0,\tilde{M}_0$, $\tilde{A}_i$ and $\tilde{C}_i$. These
  are degree $2$ coverings of the components of $\overline{(\mathbb{P}^1_K,\mathbb{B})}$.  
  Above the nodes $A_i\cap L_0$ and $M_0\cap C_i$ there are two nodes of $\overline{X}$. In general, these nodes
  involve a quadratic extension of $k$. In particular, the position of these nodes on
  $\tilde{L}_0$ and $\tilde{M}_0$ are in at most quadratic extensions of $k$.
  The covering of the node $L_0\cap M_0$ is ramified. This illustrates Remarks~\ref{Rem3.2}.\\ 
  
   The hyperelliptic involution $\sigma$ induces an involution on  every component of $\overline{X}$. 
   The node $\tilde{L}_0\cap \tilde{M}_0$ is invariant under $\sigma$ and the other nodes of $\overline{X}$
   are permuted pairwise. After normalizing $x_0=0, y_0=\infty$ and using the obvious parameters on
   $L_0,M_0, \tilde{L}_0,\tilde{M}_0$ one has:
    the nodes on $L_0$ are $f_1,\dots , f_n $; the nodes on $M_0$ are $g_1,\dots ,g_m$;
    the nodes on $\tilde{L}_0$ are $f_1^{1/2},-f_1^{1/2},\dots, f_n^{1/2},-f_n^{1/2}$ and the nodes on 
    $\tilde{M}_0$ are $g_1^{1/2},-g_1^{1/2},\dots , g_m^{1/2}, -g_m^{1/2}$.
    In the sequel we normalize by $ f_1^{1/2}=1$. \\

     A {\it fundamental domain} $F\subset \Omega$ is obtained by omitting, for each pair of nodes on $\tilde{L}_0$ 
     and $\tilde{M}_0$, one of the two. Let $\overline{F}\subset \overline{\Omega}$ denote the reduction of $F$ induced by the reduction of
     $\Omega$. The dual graph $\overline{F}^d$ of $\overline{F}$ coincides with the dual graph of 
     $\overline{(\mathbb{P}^1_K,\mathbb{B})}$. The space $\overline{F}$ is similar (but not equal) to
      $\overline{(\mathbb{P}^1_K,\mathbb{B})}\setminus Y$, where $Y$ is the union of the nodes on $\tilde{L}_0$ and on $\tilde{M}_0$
      which are chosen to be omitted in the construction of $F$.  For the omitted nodes on $\tilde{L}_0$ one has for $i\neq j$
      the inequalities   $(\pm f_i^{1/2})^2\neq (\pm f_j^{1/2})^2$ and similar inequalities for the omitted nodes on $\tilde{M}_0$.
      The $\mathbb{B}$-tree $\overline{F}$ is restricted of type $(P,m)$.\\

      For a fixed choice of $Y$ one obtains a fundamental domain $F$ and order two elements 
      $s_1,\ldots ,s_n, s_0$, $t_1,\ldots , t_m$ for the pairs
      of fixed points corresponding to  $\{a_i,b_i\}_i$, $\{x_0,y_0\}$, $\{c_j,d_j\}_j$ such that 
      \[ \Gamma =<s_1>*\cdots  *<s_n>*<s_0>*<t_1>*\cdots *<t_m>.\] 
       Any other fundamental domain, defined by deleting suitable nodes $Y$, produces elements of order 
       two obtained by conjugating some of the elements $\{s_1,\dots, t_m\}$ by $s_0$. 
       This situation is almost identical to the case (b) of \ref{(2.3.1)}.\\
              The above example is a case where a pair of branch points $\{x_0,y_0\}$ has the property that the vertices $v_1=m(x_0)$ and 
         $v_2=m(y_0)$ are of type (b) (i.e. with $\#m^{-1}(\{v_1\})=\# m^{-1}(\{v_2\})=1$). In this case the distance of $v_1,v_2$ in
         $\overline{(\mathbb{P}^1,\mathbb{B})}^d$ is 1.\\

          The distance between $v_1=m(x_0)$ and $v_2=m(x_1)$ can be greater than 1.
          {\it Example}: The space  $\overline{(\mathbb{P}^1,\mathbb{B})}$ has components $L_0,\dots ,L_5$ and nodes\\
         $L_0\cap L_1, L_1\cap L_2,L_2\cap L_3,L_2\cap L_4, L_4\cap L_5$. Further
         $m$ sends the pairs $\{a_0,b_0\}, \{a_1,b_1\}, \{a_2,b_2\}, \{a_3,b_3\}$ to $L_0, \{L_1,L_4\}, L_3,L_5$.  \\

           If the distance between $v_1$ and $v_2$ is greater than 1, then
          the shortest path from $v_1$ to $v_2$ contains only vertices $v$ of type (a) (i.e., $\# m^{-1}(v)=0$). A similar description of the 
          fundamental domain and the presentation of $\Gamma$, generated by order two elements can be computed.    \\

    \begin{remarks} {\it Metric configurations and clusters}.\\
     The ``geometry'' of a finite subset $S$ of $\mathbb{P}^1(K)$ (with $\# S\geq 3$) is described by a reduction
     $\overline{(\mathbb{P}^1, S)}$ and by the tree $\overline{(\mathbb{P}^1, S)}^d$. 
     
     For special $S=\{\{a_i,b_i\} \}_{i=0}^g$, 
     the latter is a configuration $(P,m)$ and it captures an essential part of the above reduction. 
     
     We define a {\it metric configuration} $(P,m)^+$ as a configuration $(P,m)$ together with a map from the edges 
     to the positive integers. 
     
     For the case of a local field $K$, the reduction $\overline{(\mathbb{P}^1, S)}$ defines (as before)
     $(P,m)$ and this map.  The latter sends any edge $e$ to $n\geq 1$, such that the local equation at the node corresponding to
      $e$ has the form $xy=\pi ^n$.    \\

  We compare the above to the concept of {\it clusters}, see  \cite{B---N,D-D-M-M}. 
  Now $S$ is a finite subset of the affine line 
  $\mathbb{A}^1(K)$ with $\# S\geq 3$. The cluster of $S$ consists of the subsets $\underline{s}$ of $S$ of the form 
  $D\cap S$ where $D\subset \mathbb{A}^1$ is a closed disk. To every $\underline{s}$, which has at least two elements,
  one associates the integer $d$ such that the maximal distance between elements of $\underline{s}$ is $|\pi |^d$.    
  The cluster of $S$ is (up to a shift of the numbers $d$) invariant under the transformations $z\mapsto az+b$.\\
  
 From a metric configuration and a choice of the position of the point $\infty$ in the projective line over $K$, one
 deduces a cluster for $S$. This cluster depends on the choice of the point $\infty$, since the set of closed disks
 depends on the position of $\infty$.  It can be shown  that any cluster is induced by a metric configuration and a location for $\infty$.
   
   For a metric configuration $(P,m)^+$ one can also define rigid spaces $Fix_{(P,m)^+}$ and $Branch_{(P,m)^+}$
   and a morphism $FB$ between these for suitably chosen ``metrics''.   \end{remarks}
      
 \section{\rm  Parametrization of Whittaker curves}\label{Section6}

 Let a set of fixed points $S:=\{\{ a_i, b_i\}\}_{i=0,\dots ,g}$ in good position, be given. This defines the groups
 $W \subset \Gamma=\langle s_0,\dots ,s_g\rangle$ and the Whittaker curve $\Omega/W$.
 
{\it The aim of this section is to parametrize the equation of $\Omega/W$  by theta functions for the groups $W$ and $\Gamma$.
We start by assuming that $\infty \in \Omega$ and that the $\Gamma$-orbit of $\infty$ is disjunct with $S$.}
 %
%
%
 These data induce the following commutative diagram where the  arrows are the natural maps
 \[ \begin{array}{ccc} \Omega & & \\  \downarrow& \searrow&  \\  \Omega/W& \rightarrow & \Omega/\Gamma  
 \\ \downarrow&  &\downarrow \\  X &\rightarrow  & \mathbb{P}^1  \end{array} \]
 and where $X=\Omega/W$ is a  hyperelliptic curve with function field $K(x,y)$ and equation $y^2=$ a polynomial in $x$ of degree $2g+2$. 
 The two lower downarrows are isomorphisms and the lower rightarrow is the map $(x,y)\mapsto x$.\\

 As in the proof of Theorem \ref{1.5}, the
 points $a,b\in \Omega$ are such that $\Gamma$-orbits of $a,b,a_0,b_0,\dots a_g,b_g$ and $\infty$ are all disjoint. 
 We consider the theta function $F(z)=\prod _{\gamma \in \Gamma} \frac{z-\gamma a}{z-\gamma b}$ 
 for the group $\Gamma$. Theorem \ref{1.5} implies the following.
\begin{proposition} \label{3.1}There exists a constant $c\in K^*$ such that
 $X=\Omega/W$ has affine equation $y^2=c\cdot \prod_{i=0}^{2g} (x-F(a_i))(x-F(b_i))$.  
 \end{proposition}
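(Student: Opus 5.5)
The plan is to use the isomorphism $\tilde F\colon \Omega/\Gamma\rightarrow \mathbb{P}^1_K$ from the proof of Theorem~\ref{1.5}(2), induced by the $\Gamma$-invariant theta function $F$. We take $x$ to be $F$, viewed as a rational function on $X=\Omega/W$. Then $K(x)$ is the function field of $\Omega/\Gamma$, and $X\rightarrow\Omega/\Gamma$ is the degree two quotient by $\Gamma/W=\{1,\sigma\}$. Since the residue characteristic, and hence the characteristic of $K$, is not $2$, the function field of $X$ has the form $K(x,y)$ with $y^2=h(x)$ for some squarefree $h\in K[x]$. The task is to identify $h$ up to a constant.

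The first step is to locate the branch points in the $x$-coordinate. By Theorem~\ref{1.5}(4), the ramification points of $X\rightarrow\Omega/\Gamma$ are the images of $S=\{a_i,b_i\}$, and the branch points in $\Omega/\Gamma$ are their images. Under $\tilde F$ these branch points are the values $F(a_i),F(b_i)$, for $i=0,\dots,g$. These $2g+2$ values are distinct, because $\tilde F$ is an isomorphism and the points of $S$ lie in pairwise distinct $\Gamma$-orbits. No element of order two in $\Gamma$ fixes two points of $S$ lying in one orbit, since the $s_i$ are pairwise non-conjugate and each is determined by its fixed pair. The values are also finite: the poles of $F$ are exactly the orbit $\Gamma b$, which is disjoint from $\Gamma S$ by the choice of $b$.

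The second step determines $h$. Since $\infty=\tilde F(\Gamma b)$ is not a branch point, the squarefree $h$ must have even degree, with zeros exactly at the finite branch points. Hence
\[ h=c\prod_{i=0}^{g}(x-F(a_i))(x-F(b_i))\]
for some $c\in K^*$. This yields the affine equation $y^2=c\cdot\prod (x-F(a_i))(x-F(b_i))$; note that the product in the statement should run over $i=0,\dots,g$, giving degree $2g+2$.

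Two points require care. The first is that $F$ is genuinely $\Gamma$-invariant rather than merely invariant up to signs; this was settled in the proof of Theorem~\ref{1.5} by the connectedness argument. The second is that $c$ lies in $K$ and not merely in $\bar K$. This holds because $X$, $F$ and $\Gamma$ are all defined over $K$, so the quadratic extension $K(X)/K(x)$ is generated by some $y$ with $y^2\in K(x)$, and squarefree normalization keeps the constant in $K^*$. I expect the main subtlety to be the bookkeeping that the point at infinity of $\Omega/\Gamma$ is unramified, which is exactly what the disjointness hypotheses on the orbits of $a$, $b$ and $\infty$ ensure.
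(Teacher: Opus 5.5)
Your proposal is correct and is essentially the paper's argument, which deduces the statement directly from Theorem~\ref{1.5}: $x=F$ induces $\Omega/\Gamma\cong\mathbb{P}^1_K$, the branch points are the values $F(a_i),F(b_i)$, and $x=\infty$ (the image of $\Gamma b$) is unbranched because $\Gamma b$ is disjoint from $\Gamma(S)$, which forces a squarefree polynomial of degree $2g+2$. You are right that the product should run over $i=0,\dots,g$, and the distinctness of the $2g+2$ values is most cleanly justified by Riemann--Hurwitz (a genus $g$ hyperelliptic curve has exactly $2g+2$ ramification points, all of them images of $S$ by Theorem~\ref{1.5}(4)) rather than by your compressed orbit argument.
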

  
Just for a moment we write $F_{a,b}$ for $F$ in order to indicate that $F$ depends on $a,b$. The function field of the hyperelliptic curve $\Omega/W$ is a degree two extension of $K(F_{a,b})$. Consider another choice $a',b'$. The genus zero subfield of a hyperelliptic function field defining a degree $2$ extension is unique and thus $K(F_{a,b})=K(F_{a',b'})$ holds.  Thus $F_{a',b'}$ is a fractional linear expression in $F_{a,b}$. The dependence on $a,b$ of the constant $c$ and of the $W$-invariant function $y=H(z)$ that Proposition~\ref{3.1} promises,
can be made explicit in a similar way. \\    
 
The above construction and Proposition \ref{3.1} are well known (\cite{G-vdP,S}).  We proceed by developing  {\it an explicit formula}  $H(z)$ for the element $y$ in the function field. An alternative, related approach to this is presented in \cite[\S~4.2]{KMM}, where theta functions
are viewed in terms of a ``cross ratio pairing'' on the group of zero cycles of degree $0$ on $\mathbb{P}^1(K)$, following \cite{MX}.\\

\noindent
{We start the search for a formula of $y$} by constructing a product of theta functions on $\Omega$ having the required zeros and poles.
The divisor of $y^2$ as function on $\mathbb{P}^1$ is 
 $\sum [F(a_i)]+[F(b_i)]-(2g+2)[\infty]$. 
 This divisor, pulled back to $\Omega$ and divided by 2 is the $W$-orbit of 
$\sum [a_i]+\sum [b_i] -(g+1)[b]-(g+1)[sb]$ with, say, $s=s_0$  (note that a divisor like $\sum _{\gamma \in \Gamma} [\gamma a_i]$ equals
$\sum_{w\in W}2 [wa_i]$ because $\Gamma = W \sqcup Ws_i$).

A product of theta-functions for $W$ with this divisor is
\[G(z):=\prod_{i=0}^g\prod_{w\in W}(\frac{z-w(a_i)}{z-w(b)} \cdot \frac{z-w(b_i)}{z-w(sb)}).\]    
 
 \noindent
 Indeed, write $\Theta_W(a,b;z)$ for  $\prod_{w\in W}\frac{z-w(a)}{z-w(b)}$, the standard theta function for $W$, then
 \[G(z)=\prod_{i=0}^g \Theta_W(a_i,b;z)\cdot \Theta_W(b_i,sb;z).\]
In general $G(z)$ will not be $W$-invariant. We have to multiply $G(z)$ by an invertible function on $\Omega$ in order to obtain a 
$W$-invariant function on $\Omega$ and thus a function on $X=\Omega/W$.
 For any $\delta \in \Gamma $, there is a constant $C(\delta)\in K^*$ such that 
 \[C(\delta) G(\delta z)= \prod_{i=0}^g \Theta_W(\delta^{-1}a_i,\delta^{-1}b;z)\cdot 
 \Theta_W(\delta^{-1}b_i,\delta^{-1}sb;z).\]
 This follows from the observation that $\frac{\delta z-c}{\delta z-d}=\eta \cdot \frac{z-\delta^{-1}c}{z-\delta^{-1}d}$ for some $\eta \in K^*$.\\

 For $\delta \in W$ one has $G(\delta z)=C_1(\delta)\cdot G(z)$ with $C_1(\delta)\in K^*$, since $G(z)$ is a product of theta functions for 
 $W$. Now we try to compute $G(s_0z)$, up to constants.
 Up to constants, we may replace $G(s_0z)$ by the product 
 \[\prod _{i=0}^g \prod_{w\in W}\frac{z-s_0w(a_i)}{z-s_0w(b)}\cdot \frac{z-s_0w(b_i)}{z-s_0ws_0(b)}.\]
 {\it For the terms in this product with $i=0$}, one writes $s_0w=vs_0$ and one obtains
 $\prod _{v\in W} \frac{z-va_0}{z-vs_0b}\cdot \frac{z-vb_0}{z-vb}$. This is equal to the original product.\\
 
 \noindent {For the terms with $i=1$}, one writes $s_0w=vs_1$ and this gives
 \[ (\prod _{v\in W}\frac{z-va_1}{z-vs_0b}\cdot \frac{z-vb_1}{z-vb})\cdot (\prod_{v\in W}  \frac{z-vs_0b}{z-v\gamma_1s_0b})\cdot
 (\prod _{v\in W}\frac{z-vb}{z-v\gamma_1b})\mbox{ with } \gamma_1=s_1s_0.\] 
 For $\alpha \in W$ and any $\omega \in \Omega$, write $u_\alpha(z):=\Theta_W(\omega ,\alpha(\omega);z)$.
 Recall from \cite[Chapter 2]{G-vdP} that $u_\alpha$ is invertible on $\Omega$ and does not depend on the 
 choice of $\omega$. Moreover, $u_\alpha \cdot u_\beta=u_{\alpha \beta}$.\\  
 
 Thus we find for the terms with $i=1$  the extra factor $u_{\gamma_1}^2$ where $\gamma_1=s_1s_0$.  More generally, for any 
 $j\in \{1,\dots , g\}$, the terms in the product with $i=j$ produce the extra factor $u_{\gamma_j}^2$ (where $\gamma_j=s_js_0$). 
 Thus we obtain the formula 
 \[G(s_0z)=cst\cdot G(z)\cdot u_{\gamma}(z)^2, \mbox{ where } \gamma =\gamma_1\cdots \gamma _g\mbox{  and } cst \mbox{ is some constant}.\] 
 We want to multiply $G(z)$ by an invertible $u_\alpha(z)$ with $\alpha \in W$ such that $G(z) u_\alpha(z)$ is, up to a constant factor, invariant under $s_0$. One computes the formula  $u_\alpha(s_0z)=cst\cdot u_{s_0\alpha s_0^{-1}}(z)$ for some $cst\in K^*$.    Now
 \[G(s_0z)u_\alpha(s_0z)=cst\cdot G(z)u_\alpha(z)\cdot u_\gamma(z)^2\cdot u_{s_0\alpha s_0^{-1}\alpha^{-1}}(z) \mbox{ and thus we want }\]
 $\gamma^2\cdot s_0\alpha s_0^{-1}\alpha ^{-1}$ to be equal to 1 in $W_{ab}$. Observing  that
 $s_0\gamma_is_0^{-1}=\gamma_i^{-1}$, one finds that $\alpha =\gamma_1\cdots \gamma_g$ has the required property.\\
 
 \noindent {\it Conclusion}: The function $H(z)=G(z)u_\gamma(z)$ with $\gamma=\gamma_1\cdots \gamma_g$ is up to 
 constants invariant under $\Gamma$.   Hence, there  is a homomorphism 
 $c\colon \delta \in \Gamma \mapsto \{\pm 1\}$ such that $H(\delta z)c(\delta)=H(z)$ holds for every $\delta \in \Gamma$. \\

\begin{proposition} \label{3.2} \begin{itemize}
\item[{\rm (1).}] $H=H(z):=u_{\gamma_1\dots \gamma_g}(z)\cdot \prod _{i=0}^g\Theta_W(a_i,b;z)\cdot \Theta_W(b_i,s_0b;z)$
 is $W$-invariant. 
\item[{\rm (2).}]  Consider $H$ as element in the function field $K(x,y)$ of 
 $X=\Omega /W$. Then \[H^2=c\prod (x-F(a_i))(x-F(b_i))\] with 
 $c^{-1}=\prod (1-F(a_i))(1-F(b_i))\in K^*$. 
\item[{\rm (3).}] The affine equation 
\[y^2=\prod_{i=0}^g \frac{(x-F(a_i))}{(1-F(a_i))}\cdot \frac{(x-F(b_i))}{(1-F(b_i))} \]
for the curve $X=\Omega/W$ is parametrized by $z\in \Omega \mapsto (F(z),H(z))$.
\end{itemize}
\end{proposition}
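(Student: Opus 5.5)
We start from the Conclusion just before the statement: $H(z)=G(z)u_\gamma(z)$ with $\gamma=\gamma_1\cdots\gamma_g$ satisfies $H(\delta z)c(\delta)=H(z)$ for a homomorphism $c\colon\Gamma\to K^*$ with $c(s_i)\in\{\pm1\}$. For (1) we only need $c$ to be trivial on $W$. Since $W$ is generated by $\gamma_j=s_js_0$, it suffices to know $c(s_j)=c(s_0)$ for all $j$. We evaluate $H(s_jz)c(s_j)=H(z)$ at the fixed point $a_j$ of $s_j$. Then $H(a_j)=0$, so this gives nothing, and we evaluate at a point of the other kind instead.

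The plan is to use the local behaviour at the fixed point $a_j$. The function $H$ has a simple zero at $a_j$, since the divisor of $G$ is the $W$-orbit of $\sum[a_i]+[b_i]-(g+1)[b]-(g+1)[s_0b]$ and $u_\gamma$ is invertible. Take a local parameter $t=(z-a_j)/(z-b_j)$, so that $s_j$ acts by $t\mapsto -t$. Writing $H=t\,h(t)$ with $h(0)\neq0$, the relation $H(s_jz)=c(s_j)^{-1}H(z)$ gives $-t\,h(-t)=c(s_j)^{-1}t\,h(t)$, hence $c(s_j)=-1$ for every $j$ (and $\infty$ as a fixed point is handled after a Möbius change). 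Therefore $c(s_j)c(s_0)^{-1}=1$, so $c|_W=1$ and $H$ is $W$-invariant. Simultaneously $c(s_0)=-1$ shows that $H$ is anti-invariant under $\sigma$, as a $y$-coordinate should be.

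For (2), $H$ defines a function on $X=\Omega/W$ with $\sigma^*H=-H$, so $H^2$ is $\Gamma$-invariant and hence a rational function of $x=F$ by Theorem~\ref{1.5}(2). We compare divisors on $\Omega/\Gamma\cong\mathbb{P}^1$. The zeros of $H^2$ are the images $F(a_i),F(b_i)$, each simple in $x$ because $a_i$ is a ramification point. The poles lie only at the image of the orbit of $b$, and $F(b)=\infty$ since $F=\Theta_\Gamma(a,b;\cdot)$ has its poles at $\Gamma b$. The order there is $2g+2$, by the divisor computation preceding the statement. Hence $H^2=c\prod(x-F(a_i))(x-F(b_i))$ for some $c\in K^*$. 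To identify $c$, evaluate at a point $z_1$ with $F(z_1)=1$; the natural choice is $z_1=\infty$ if $F(\infty)=1$. We therefore check that $F(\infty)=\prod_\gamma 1=1$ (convergence via the normalisation of the theta product) and that $H(\infty)=1$, since every factor $\frac{z-w(\cdot)}{z-w(\cdot)}$ tends to $1$ and $u_\gamma(\infty)=1$ with the same normalisation. Then $1=c\prod(1-F(a_i))(1-F(b_i))$, which is the stated formula.

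Part (3) then follows immediately: $(F,H)$ is $W$-invariant and satisfies the normalized equation, and $K(F,H)$ is the full function field because $H\notin K(F)$ (it is anti-invariant). The main obstacle is the careful justification that $c(s_j)=-1$, namely that the constant in the transformation law is exactly the sign read off from the local expansion at $a_j$, together with checking that $H(\infty)=F(\infty)=1$ under the chosen normalisations when $\infty\in\Omega$ is not in the orbit of $S$.
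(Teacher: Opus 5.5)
Your proof is correct, but it reaches $W$-invariance by a different route from the paper's.

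\emph{The paper's route.} It takes the $W$-invariant function $\tilde y$ on $\Omega$ that induces $y$ in Proposition~\ref{3.1}; this function satisfies $\tilde y(s_iz)=-\tilde y(z)$. The quotient $C=H/\tilde y$ has no zeros or poles, since $H$ and $\tilde y$ have the same divisor. $C$ is automorphic with a character into $\{\pm1\}$, so $C^2$ is an invertible $\Gamma$-invariant function, hence constant on $\Omega/\Gamma\cong\mathbb{P}^1$. Then $C$ is constant because $\Omega$ is connected. Part (1) and the shape of $H^2$ in (2) both follow from $H=C\tilde y$. The constant $c$ is then found at $z=\infty$, exactly as you do.

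\emph{Your route.} You determine the automorphy character directly. The simple zero of $H$ at the fixed point $a_j$, together with $s_j\colon t\mapsto -t$ in the coordinate $t=(z-a_j)/(z-b_j)$, forces $c(s_j)=-1$ for every $j$. Hence $c(\gamma_j)=c(s_j)c(s_0)=1$ and $c|_W=1$. You then get the equation in (2) by a divisor count on $\Omega/\Gamma$:
\begin{itemize}
\item simple zeros in $x$ at the branch values, because the stabilizer of $a_i$ is $\langle s_i\rangle$;
\item a pole of order $2g+2$ at $x=\infty$, coming from $Wb\cup Ws_0b$, where the stabilizer is trivial.
\end{itemize}

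\emph{What each buys.} Your argument is more self-contained: it needs neither Proposition~\ref{3.1} nor the Liouville-type step, and it gives $H\circ s_j=-H$ explicitly for every $j$. The paper's argument avoids local analysis at the fixed points and obtains (1) and (2) in one stroke. Both depend equally on the preceding Conclusion that $H$ is automorphic for $\Gamma$ up to constants.

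Two small remarks. The opening attempt to evaluate at $a_j$ itself is superfluous. The Möbius change for $\infty$ is unnecessary, since the standing assumption here is $\infty\in\Omega$ with $\Gamma\infty$ disjoint from $S$.
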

\begin{proof} (1) and (2). The element $y$ in the function field of $X$ is induced by a unique $W$-invariant meromorphic function $\tilde{y}$ on 
$\Omega$. Since the involution of $X$ maps $y$ to $-y$, one has $\tilde{y}(s_iz)=-\tilde{y}(z)$ for $i=0,\dots ,g$. Consider now
$C(z):=\frac{H(z)}{\tilde{y}(z)}$.

 By construction, $C(z)$ has no poles and no zero's. Furthermore,  for every
$\gamma \in \Gamma$ there is a constant $cst(\gamma)$ such that $C(\gamma z) =C(z)\cdot cst(\gamma)$ holds. Since $\Gamma$ is generated
by elements of order~2, one has $cst(\gamma)\in \{\pm 1\}$. 

 Then $C(z)^2$ is $\Gamma$-invariant and has no zeros and no poles. Thus $C(z)^2$ is constant. The function $C(z)$ is holomorphic on $\Omega$
 and $\Omega$ is connected. Therefore $C(z)$ itself is constant. Because $\tilde{y}$ is $W$-invariant, also $H$ is $W$-invariant.  
 By construction the formula  for $H^2$ holds.
  Since $\infty \in \Omega$ we may put $z=\infty$ in (2) and obtain the value of $c$. Part (3) is obvious.
  \end{proof}
  
  \begin{observation}\label{g2geval} The terms $F$ and $H$ in the affine equation for $\Omega/W$ depend on the choice of points
  $a,b\in \Omega$. However,   the image $\bar{c}$ of $c$ in $K^*/(K^*)^2$ is independent of this choice.
For genus $2$ and for various genus $3$ configurations we verified using Proposition~\ref{3.2}(2), that
$\bar{c}=1$. We expect that this holds in general for Whittaker groups defined over $K$.
\end{observation}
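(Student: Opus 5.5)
The independence statement should follow from the fact that the $K$-curve $X=\Omega/W$ together with its hyperelliptic map to $\mathbb{P}^1$ does not depend on $a,b$. Only the coordinate $x=F_{a,b}$ on $\mathbb{P}^1$ and the chosen $(-1)$-eigenfunction $y=H$ change.

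\emph{Step 1 (comparing two choices).} Let $a',b'$ be a second admissible choice. As noted after Proposition~\ref{3.1}, $K(F_{a,b})=K(F_{a',b'})$, so $F_{a',b'}=\phi\circ F_{a,b}$ for some $\phi\in PGL_2(K)$. Both $H_{a,b}$ and $H_{a',b'}$ lie in the $(-1)$-eigenspace of the hyperelliptic involution on $K(x,y)$. Hence $H_{a',b'}=r(x)H_{a,b}$ with $r\in K(x)^*$. Comparing divisors via the construction of $G(z)$ in Proposition~\ref{3.2} (poles over the $W$-orbits of $b,s_0b$, resp.\ $b',s_0b'$), I expect
\[
r(x)=\lambda\,(\gamma x+\delta)^{-(g+1)},
\]
up to the normalisation of $\phi=\begin{pmatrix}\alpha&\beta\\ \gamma&\delta\end{pmatrix}$, with $\lambda\in K^*$.

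\emph{Step 2 (comparing the constants).} Put $e_i=F_{a,b}(\cdot)$ and $e'_i=\phi(e_i)$ for the $2g+2$ fixed points. Substitute $x'=\phi(x)$ into $y'^2=c'\prod(x'-e'_i)$. Using
\[
x'-e'_i=\det(\phi)\,(x-e_i)\,(\gamma x+\delta)^{-1}(\gamma e_i+\delta)^{-1},
\]
one gets
\[
c=c'\,\lambda^{-2}\det(\phi)^{2g+2}\prod_i(\gamma e_i+\delta)^{-1}.
\]

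\emph{Step 3 (the main obstacle).} The remaining task is to show that $\prod_i(\gamma e_i+\delta)$ is a square in $K^*$. I would use the explicit normalisation in Proposition~\ref{3.2}(2): $H(\infty)=1$ and $F(\infty)=1$, since every factor of the theta products and $u_\gamma$ tends to $1$ at $z=\infty$. This says exactly that $c\prod(1-e_i)=1$, i.e.\ $x=1$ carries the $K$-rational point $(1,1)$. Similarly $x'=1$ carries $(1,1)$ for the primed equation. Transporting the point $\infty\in\Omega$ along both parametrisations gives two $K$-rational points of $X$ over $x=1$ and over $x'=1$. Evaluating the Step~2 identity at the $x$-value $\phi^{-1}(1)$ then expresses $\prod_i(\gamma e_i+\delta)$ as $(\gamma\,\phi^{-1}(1)+\delta)^{2g+2}$ times a square, hence as a square, using that $2g+2$ is even. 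This bookkeeping of the $(\gamma x+\delta)^{g+1}$ factors is where I expect the main difficulty. If it resists, I would instead argue by continuity: $\bar c$ is a locally constant function of $(a,b)$ on the connected space $\Omega^*\times\Omega^*\setminus\{\mathrm{diagonal}\}$, exactly as for the automorphy factors $c(s_i)$ in the proof of Theorem~\ref{1.5}. This needs $|c-c_0|<|c_0|$ for nearby $(a,b)$ together with Hensel's lemma, and residue characteristic $\neq2$ makes such $1+\epsilon$ squares.

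\emph{Step 4 (the claim $\bar c=1$ in genus $2$).} For $g=2$ I would take the explicit approximations of $F$ on the three configurations of Section~\ref{explicit1} and compute $c^{-1}=\prod(1-F(a_i))(1-F(b_i))$ to leading order. The goal is to show this leading term is a square times $(1+\epsilon)$ with $|\epsilon|<1$; Hensel's lemma then gives $\bar c=1$. The genus-$3$ configurations would be treated identically, using the approximations of Section~\ref{explicit2}.
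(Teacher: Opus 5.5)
Your Steps 1--2 are correct. They give $\bar c'=\bar c\cdot\overline{\prod_i(\gamma e_i+\delta)}$, so independence reduces to showing $\prod_i(\gamma e_i+\delta)\in (K^*)^2$. (The paper itself states the independence without argument and supports $\bar c=1$ only by computation.) Step 3 does not establish this.

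The $K$-point coming from $z=\infty$ is one and the same point of $X$. It lies over $x=1$ and $x'=1$ because $\phi(1)=1$, and it encodes nothing beyond the two normalisations $c\prod_i(1-e_i)=1=c'\prod_i(1-e'_i)$. Concretely, $r(1)=1$ gives $\lambda=(\gamma+\delta)^{g+1}$, and inserting this into your Step 2 identity yields
\[
\prod_i(\gamma e_i+\delta)=\frac{c'}{c}\Bigl(\frac{\alpha\delta-\beta\gamma}{\gamma+\delta}\Bigr)^{2g+2}.
\]
This is a square times $c'/c$, which is exactly the unknown, so the argument is circular.

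The fallback fails too. $\bar c$ is only defined on $K$-rational pairs $(a,b)$, which form a totally disconnected set. Local constancy there, combined with connectedness of the rigid space, proves nothing: on the connected affinoid $\{|t|=1\}$ the square class of $t$ is locally constant but not constant when $k^*\neq (k^*)^2$. In Theorem~\ref{1.5} connectedness works because $c(s_i)$ is an analytic function with $c(s_i)^2=1$, so $(1+c(s_i))/2$ is an idempotent. A square class has no such analytic incarnation unless you exhibit an analytic square root of $c$.

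The missing idea is to use a $K$-point of $\Omega$ over a different value of $x$, namely the pole $b$. Since $b\in\Omega\cap K$ and $\Gamma b$ avoids the fixed points, near $z=b$ you have $F=\alpha_b(z-b)^{-1}+O(1)$ and $H=\beta_b(z-b)^{-(g+1)}\bigl(1+O(z-b)\bigr)$. Here $\alpha_b,\beta_b\in K^*$ because all the data are defined over $K$. Multiplying $H^2=c\prod_i(F-e_i)$ by $(z-b)^{2g+2}$ and letting $z\to b$ gives $c=(\beta_b/\alpha_b^{g+1})^2$.

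Geometrically, the image of $b$ is a $K$-rational, non-Weierstrass point of $X$ over $x=\infty$. The points at infinity of $y^2=c\prod_i(x-e_i)$ are $K$-rational only if $c$ is a square. Hence $\bar c=1$ for every admissible $(a,b)$, and independence follows at once.

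In your notation this amounts to evaluating at $z=b'$ instead of $z=\infty$. When $\gamma\neq 0$ one has $F_{a,b}(b')=-\delta/\gamma$, so $\prod_i(\gamma e_i+\delta)=\gamma^{2g+2}H_{a,b}(b')^2/c$ and $\bar c'=\bar c\cdot\bar c^{-1}=1$. If $\gamma=0$ the product is simply $\delta^{2g+2}$.

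This argument also settles, in every genus, the expectation that the paper only checks computationally, so Step 4 (your version of the paper's route) becomes superfluous. Step 4 is also misaimed as written. The approximations of Section~\ref{explicit1} use $a=0$, $b=1$ with $0,1,\infty$ fixed points, which is outside the setting of Proposition~\ref{3.2}(2). So the formula $c^{-1}=\prod(1-F(a_i))(1-F(b_i))$ cannot be applied to them as they stand.
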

   \color{black}
   
   \bigskip
   
 {\it For completeness we develop now formulas for the equation of $\Omega/W$  
 for the case that $\infty$ is a branch point}. \\

In general, one adopts the notational conventions $\frac{z-a}{z-b}=1$ if $a=b=\infty$, $\frac{z-a}{z-\infty}=z-a$ if $a\neq \infty$,
and $\frac{z-\infty}{z-b}=\frac{1}{z-b}$ if $b\neq \infty$.\\

As before the points $a_0,b_0,\ldots ,a_g,b_g $ are supposed to be in good position and the $\Gamma$-orbits of $a_0,b_0,a_1,b_1,\ldots ,a_g,b_g, \infty$ are distinct.  
 For the  $a,b$  we make the choice $a=a_0, b=b_0$. Then 
 $F(z)=\prod _{\gamma \in \Gamma} \frac{z-\gamma a_0}{z-\gamma b_0}$ is well defined, convergent and the same reasoning
 as before shows that it is a $\Gamma$-invariant function (in the proof one allows $a_0$ and $b_0$ to move and one uses connectedness).
 The branch points are again the $F(a_0), \dots , F(b_g)$ and now  $F(a_0)=0$ and $F(b_0)=\infty$. The equation for the hyperelliptic 
 curve can be written as $y^2=c\cdot x\cdot \prod_{j=1}^g (x-F(a_j))(x-F(b_j))$.   \\

 The next item is to produce a product formula for $y$ as function on $\Omega$.   The divisor of $y$ as function on $\Omega$
 is the $W$-orbit of
  \[ [a_0]+\sum_{j=1}^g [a_j]+\sum _{j=1}^g[b_j]-(2g+1)[b_0].\]
  A product of theta functions for $W$ with this divisor is
 \[ G(z):=\prod _{w\in W} \frac{z-w(a_0)}{z-w(b_0)}\cdot  \prod _{j=1}^g \prod_{w\in W} \frac{z-w(a_j)}{z-w(b_0)} 
  \cdot \frac{z-w(b_j)}{z-w(b_0)} .\]
 Thus for any $\tilde{w} \in W$ there is a constant $c(\tilde{w})$ with  $c(\tilde{w})G(\tilde{w} z)=G(z)$. For $\delta \in \Gamma$ there is a 
 constant $C(\delta)$ such that     
 \[C(\delta) G(\delta z):=\prod _{w\in W} \frac{z-\delta^{-1} w(a_0)}{z-\delta^{-1}w(b_0)}\cdot 
   \prod _{j=1}^g \prod_{w\in W} \frac{z-\delta^{-1}w(a_j)}{z-\delta^{-1}w(b_0)} 
  \cdot \frac{z-\delta^{-1}w(b_j)}{z-\delta^{-1}w(b_0)} .\]         
 For the evaluation of $G(\delta z)$ up to constants for any $\delta \in \Gamma$ it suffices to consider $\delta=s_0$ and the right hand side of the above
 formula.  
 
 For the first part $\prod \frac{z-s_0w(a_0)}{z-s_0w(b_0)}$, write $s_0w=\tilde{w}s_0$. Since $a_0,b_0$ are fixed points for $s_0$,
 this part of the formula remains unchanged.
 
 For the part of the formula involving $a_1$ one writes $s_0w=\tilde{w}s_1$ and observes that
 $\prod _{\tilde{w}}\frac{z-\tilde{w}s_1a_1}{z-\tilde{w}s_1b_0}$ is equal to  
  $\prod _{\tilde{w}}\frac{z-\tilde{w}a_1}{z-\tilde{w}b_0}$  multiplied by  $\prod _{\tilde{w}}\frac{z-\tilde{w}b_0}{z-\tilde{w}\gamma_1b_0}$, where
  $\gamma_1=s_1s_0$. The last term is the invertible function $u_{\gamma_1}$ on $\Omega$. 
  
  The $b_1$ terms of the formula also produce the extra term $u_{\gamma_1}$. In fact, for each $j\in \{1,\dots ,g\}$ the terms involving $a_j$ and $b_j$
  produce an extra term $u_{\gamma_j}$. Write $\gamma =\gamma_1\cdots \gamma_g$. Then we have shown that 
  $G(s_0z)$ is up to a constant equal to $G(z)\cdot u_{\gamma}^2$. As in the previous part of the manuscript one obtains:
  $H(z):=G(z)\cdot   u_{\gamma}$ is up to constants invariant under $\Gamma$ and finally   
 
 \begin{proposition} \label{3.3} \begin{itemize}
 \item[{\rm (1)}.] The meromorphic function $H=H(z)$ on $\Omega$ defined by the formula 
 \[u_{\gamma}(z)\cdot\Theta_W(a_0,b_0;z)\cdot\prod _{j=1}^g \Theta_W(a_j,b_0;z) \Theta_W(b_j,b_0;z)\]  is $W$-invariant.
\item[{\rm (2)}.] The relation $H^2=c \cdot F(z)\prod _{j=1}^g(F(z)-F(a_j))(F(z)-F(b_j))$ holds for certain $c\in K^*$.
 \item[{\rm (3)}.] The constant $c$ can be computed by considering a suitable value for $z$. The affine equation of $X=\Omega/W$ is uniformized by $z\in \Omega \mapsto (F(z),H(z))\in X$. 
\end{itemize}
 \end{proposition}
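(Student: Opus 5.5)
The proof will mirror that of Proposition~\ref{3.2}, using the computation just made in the case $a=a_0$, $b=b_0$. The plan has three steps.

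\emph{Step 1: the divisor of $H$ and its behaviour under $\Gamma$.} The computation preceding the statement shows that $G(s_0z)$ equals, up to a constant, $G(z)\,u_\gamma(z)^2$, where $\gamma=\gamma_1\cdots\gamma_g$. I will then use $u_\alpha(s_0z)=cst\cdot u_{s_0\alpha s_0^{-1}}(z)$. Since $s_0\gamma_j s_0^{-1}=\gamma_j^{-1}$ and $u$ is multiplicative, this gives $u_\gamma(s_0z)=cst\cdot u_\gamma(z)^{-1}$, because $u$ factors through $W_{ab}$. Hence $H(s_0z)=cst\cdot H(z)$. For $w\in W$ one has $H(wz)=cst\cdot H(z)$, because $H$ is a product of theta functions for $W$ and invertible functions $u_\alpha$. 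As $\Gamma=\langle W,s_0\rangle$, there is a character $\delta\mapsto C(\delta)\in K^*$ with $H(\delta z)=C(\delta)H(z)$ for all $\delta\in\Gamma$. Because $\Gamma$ is generated by involutions, every $C(s_i)$ lies in $\{\pm1\}$.

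\emph{Step 2: comparison with $y$.} By Theorem~\ref{1.5}, $X=\Omega/W$ is hyperelliptic with $x=F$ and branch points $F(a_0)=0$, $F(b_0)=\infty$, and $F(a_j),F(b_j)$ for $j\ge1$. Let $\tilde y$ be the $W$-invariant meromorphic function on $\Omega$ induced by $y$. Then $\tilde y(s_iz)=-\tilde y(z)$ for every $i$. The divisor of $\tilde y$ on $\Omega$ is the $W$-orbit of $[a_0]+\sum_j([a_j]+[b_j])-(2g+1)[b_0]$. This is exactly the divisor of $G$, and $u_\gamma$ has neither zeros nor poles. Put $C(z)=H(z)/\tilde y(z)$. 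It is holomorphic and invertible on $\Omega$, and it satisfies $C(\delta z)=\pm C(z)$ for $\delta\in\Gamma$. So $C^2$ is a $\Gamma$-invariant invertible function, i.e.\ a function on $\Omega/\Gamma\cong\mathbb{P}^1$ without zeros or poles, hence constant. Since $\Omega$ is connected, $C$ itself is constant. It follows that $H$ is $W$-invariant, which is (1), and that $H^2$ is a constant multiple of $y^2=c\,x\prod_j(x-F(a_j))(x-F(b_j))$, which is (2).

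\emph{Step 3: the constant and the uniformization.} The constant $c$ is obtained by evaluating both sides of (2) at a convenient point of $\Omega$ outside the orbits of the fixed points, for instance $z=\infty$, which lies in $\Omega$ by assumption. The map $z\mapsto(F(z),H(z))$ is $W$-invariant and induces the isomorphism $\Omega/W\to X$, which gives (3).

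The main obstacle is Step 1. There I must check carefully that the reindexing $s_0w=\tilde w s_j$ produces exactly one factor $u_{\gamma_j}$ for each of $a_j$ and $b_j$, including when the orbit contains $\infty$. I must also check that the $a_0/b_0$ factor is left unchanged. That holds because $s_0$ fixes $a_0$ and $b_0$, so $s_0Ws_0=W$ permutes the orbit. The remaining steps are formal repetitions of the proof of Proposition~\ref{3.2}.
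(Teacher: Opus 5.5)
Your proposal is correct and follows the paper's own route. The paper likewise computes $G(s_0z)=cst\cdot G(z)\,u_\gamma(z)^2$, corrects by $u_\gamma$ using $s_0\gamma_js_0^{-1}=\gamma_j^{-1}$ so that $H$ is invariant up to constants under $\Gamma$, and then concludes exactly as in the proof of Proposition~\ref{3.2} by showing that $H/\tilde y$ is constant. Evaluating at $z=\infty$ is a legitimate instance of the ``suitable value'' in part (3), since $\infty\in\Omega$ lies outside the orbits of the fixed points.
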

 
 \noindent
 {\it Comments}.\\
 Above we have not specified the good position of the $a_0,b_0,\ldots ,a_g,b_g$. But we have specified that
 $0,\infty$ are branch points. Formulas, similar to the ones for $F(z)$ and $H(z)$,  can  be obtained from the previous      
 formulas by  a fractional linear  transformation sending the previous $(F(a_0),F(b_0))$ to $(0,\infty)$.

   \begin{observations} \label{convergence} {\it Convergence and approximation of theta functions.}\\
   Fix a point $\infty$ in $\mathbb{P}^1$. This induces an absolute value $z\mapsto |z |$ on $\mathbb{P}^1\setminus \{\infty \}$.
   This absolute value is unique up to a transformation $z\mapsto |a\cdot z+b|$. For any three distinct points  
    $x_1,x_2,x_3, \in \mathbb{P}^1\setminus \{\infty \}$, the expression $\frac{|x_2-x_3|}{|x_1-x_2|}$ is independent of the choice
    of the absolute value. One easily verifies the following:\\
    
 \noindent   {\it  $\frac{|x_2-x_3|}{|x_1-x_2|}<1$ if and only if the reduction $\overline{(\mathbb{P}^1,\{x_1,x_2,x_3,\infty\})}$
has components $L_1,L_2$,  the images of $x_1,\infty$ are on $L_1$ and the images of $x_2,x_3$ are on $L_2$.\\

\noindent Suppose $\frac{|x_2-x_3|}{|x_1-x_2|}<1$.  Then the  formal local ring at the double point $L_1\cap L_2$ has the form
 $K^o[[x,y]]/(xy -\alpha)$ with $0<|\alpha |<1$ and  $\frac{|x_2-x_3|}{|x_1-x_2|}=|\alpha |$. }\\

  In the general case, {\it we will call $|\alpha |$ the  size of the double point in the reduction with formal local ring
  $K^o[[x,y]]/(xy-\alpha)$.}  In the case that the valuation of $K$ is discrete, we fix a uniformizing element  $\pi$ and
  write $|\alpha|=|\pi |^n$ with $n\geq 1$. Then the dual graph of a reduction can be made into a ``metric graph'' by attaching to each
  double point the above integer $n$.\\    

  The theta function for the group $\Gamma$ has the form $\prod_{\gamma \in \Gamma} \frac{z-\gamma (a)}{z-\gamma (b)}$.
  We assume, as in Theorem \ref{2.2}, that a fundamental domain $F$ is chosen. This yields $\Gamma =<s_0>*\cdots  *<s_{g}>$.
  Further we consider $a,b,\infty \in F$.  Let $\pi \in K, 0<|\pi |<1$ satisfy $|\pi |$ is greater than or equal to all the sizes of the double
  points of $\overline{\Omega}$.\\
  
  Consider $\frac{z-\gamma (a)}{z-\gamma (b)}-1=\frac{\gamma(b) -\gamma (a)}{z-\gamma (b)}$ with $z$ in the fundamental domain
  $F$. 
  \[ \mbox{ For $\gamma \in \Gamma$ with length $\geq 2$ one has } |\frac{z-\gamma (a)}{z-\gamma (b)}-1|\leq |\pi |^{-1+length(\gamma)}.\]
     
   \begin{proof} Write $x_1=z, x_2=\gamma(a), x_3=\gamma (b)$. Since $x_1,\infty \in F$, $x_2,x_3\in \gamma F$ and
   $F\cap \gamma F=\emptyset$ we conclude that the position of $x_1,\infty, x_2,x_3$ is such that 
    $\frac{|x_2-x_3|}{|x_1-x_2|}<1$. The reduction  \[\overline{(\mathbb{P}^1,\{x_1,x_2,x_3,\infty\})}=L_1\cup L_2\] is obtained from the
    reduction of $\overline{\Omega}$ by contracting all components  not involving $\{x_1,x_2,x_3,\infty\}$.  The size of the
    double point $L_1\cap L_2$ is equal to the product of the sizes of the double points in $\overline{\Omega}$ in a (minimal)
    path from to image of $x_1$ to the image of $x_2$ or $x_3$. According to  the comments on the proof of Theorem \ref{2.2},
    the length of this path is greater than or equal to $-1+length(t)$. This proves the above inequality. \end{proof} 
    
    A similar estimate for the terms in a theta function for the group $W$ holds. From these estimates the convergence of
    the theta functions follows and the approximations are made explicit.      \end{observations}

 \section{\rm Good position for a set $S$ of fixed points}\label{Section7} 
   
 First we discuss a necessary condition for ``good position''.  

\begin{lemma}\label{4.1}  Let $S=\{\{a_i,b_i\}\}_{i=0,\dots , g}$ denote a set of fixed points.\\ Write $R_S\colon \mathbb{P}^1_K\rightarrow \overline{(\mathbb{P}^1_K,S)}$ for the reduction. For any irreducible component $L$ of $\overline{(\mathbb{P}^1,S)}$ we put 
 $R_L\colon \mathbb{P}^1_K\stackrel{R_S}{\rightarrow} \overline{(\mathbb{P}^1_K,S)}\stackrel{pr}{\rightarrow}L$ . \\
 
Suppose that $S$ is in good position. Then any irreducible component $L$ in the reduction {\rm separates} at most 
one pair $\{a_i,b_i\}$. \\ 
\indent In other words, the divisor $\sum _im_i[p_i]:= R_L(S)$ on $L$, satisfies:\\
(i) all $m_i$ are even, {\rm or}\\
(ii)  $[R_L(a_i)]+[R_L(b_i)]+\sum n_j[p_j]$ for some $i\in \{0,\cdots ,g\}$ and all $n_j$ even.
\end{lemma}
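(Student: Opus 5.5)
The plan is a direct group-theoretic argument. We show that if a component $L$ separated two different pairs, the group $\Gamma$ would contain a bounded subgroup of infinite order, which is impossible for a discontinuous group. The reformulation in terms of the divisor $R_L(S)$ then follows by counting.

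\emph{Step 1 (normalization).} Fix an irreducible component $L$ of $\overline{(\mathbb{P}^1_K,S)}$. After applying an element of ${\rm PGL}_2(K)$, which changes neither good position nor the reduction up to isomorphism, we may choose a coordinate $z$ with the following properties. The affinoid $Red^{-1}(L^*)$ is $\{|z|\le 1\}$ minus finitely many residue disks. The projection $R_L$ is then the map $z\mapsto\bar z\in\mathbb{P}^1(k)$, extended by sending points with $|z|>1$ to $\bar\infty$. This is the standard description from \S\ref{subsec2.2}, example (b).

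\emph{Step 2 (key claim).} Suppose $R_L(a_i)\neq R_L(b_i)$. We claim that $s_i$ lies in ${\rm PGL}_2(K^o)$, i.e.\ it is represented by a matrix with entries in $K^o$ and unit determinant. Indeed, up to a further element of ${\rm PGL}_2(K^o)$ we may take $a_i=0$ and $b_i=\infty$. This is possible because $a_i,b_i$ have distinct reductions, and a Möbius map in ${\rm PGL}_2(K^o)$ sends two points with distinct residues to $0,\infty$. Then $s_i(z)=-z$, which visibly lies in ${\rm PGL}_2(K^o)$, and the conjugating map also lies in ${\rm PGL}_2(K^o)$. (The residue characteristic is $\neq 2$, but here this is only used to know that $s_i$ is the Möbius map $z\mapsto -z$, with distinct fixed points.)

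\emph{Step 3 (contradiction).} Suppose now that $L$ separates two distinct pairs $\{a_i,b_i\}$ and $\{a_j,b_j\}$ with $i\neq j$. By Step~2 both $s_i$ and $s_j$ lie in ${\rm PGL}_2(K^o)$, so the subgroup $\langle s_i,s_j\rangle\subset\Gamma$ lies in a compact subgroup of ${\rm PGL}_2(K)$. A discontinuous group has only finite bounded subgroups: an infinite subset of a compact group has an accumulation point, which yields infinitely many elements moving some ordinary point arbitrarily little, contradicting discontinuity. On the other hand, $\Gamma=\langle s_0\rangle*\cdots*\langle s_g\rangle$, so $s_is_j$ has infinite order. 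This contradiction shows that $L$ separates at most one pair.

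\emph{Step 4 (divisor statement).} Every pair that is not separated by $L$ contributes $2[p]$ to $R_L(S)$ for a single point $p$. If no pair is separated, all multiplicities $m_i$ are even, which is (i). If exactly one pair $\{a_i,b_i\}$ is separated, we get $R_L(S)=[R_L(a_i)]+[R_L(b_i)]+\sum n_j[p_j]$ with all $n_j$ even, which is (ii).

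The main point needing care is Step~2. One must check that "separated on $L$" really means distinct residues in the chosen coordinate, including the case where one of the points maps to $\bar\infty$ (i.e.\ lies outside $\{|z|\le 1\}$). One must also check that the normalizing Möbius map can be taken in ${\rm PGL}_2(K^o)$. Both follow from the transitivity of ${\rm PGL}_2(K^o)$ on pairs of points of $\mathbb{P}^1(K)$ with distinct reductions.
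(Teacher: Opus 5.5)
Your proof is correct in outline and takes a different route from the paper, but Step~3 needs one repair for general $K$.

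\textbf{Comparison with the paper.} The paper reduces to $g=1$. It looks only at the four points $\{a_i,b_i,a_j,b_j\}$ and lists the three possible shapes of their reduction: the pairs on different lines, the pairs crossed over two lines, or all four points on one line. A matrix computation then shows that only the first shape gives good position.

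You instead stay on the component $L$ of the full reduction. You observe that a pair separated on $L$ gives an involution in the stabilizer of $L$, a conjugate of ${\rm PGL}_2(K^o)$. This avoids the case enumeration. It also avoids the small check, implicit in ``it suffices to consider $g=1$'', that a component separating two pairs of $S$ forces the four-point reduction into one of the two bad shapes. It explains directly why separation obstructs good position. The paper's computation gives in addition the converse for $g=1$ (two pairs on different lines are in good position), which the lemma does not need.

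\textbf{The gap in Step~3.} The paper allows an arbitrary complete non-archimedean $K$, for example $\mathbb{C}_p$ or $\mathbb{C}((t))$. When the residue field is infinite, ${\rm PGL}_2(K^o)$ is not compact. So the ``infinite subset of a compact group has an accumulation point'' argument only works for local fields.

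The conclusion still holds, and an element-wise argument repairs it. By Step~2, $t=s_is_j$ is represented by a matrix in ${\rm GL}_2(K^o)$. Its eigenvalues are therefore integral over $K^o$ with unit product, so both have absolute value $1$ and $|{\rm tr}(t)^2/\det t|\le 1$. Hence $t$ is not hyperbolic. In a discontinuous group every element of infinite order is hyperbolic, since elliptic or parabolic elements of infinite order are incompatible with discontinuity (standard, see \cite{G-vdP}, Ch.~I). Because $s_is_j$ has infinite order in $\langle s_i\rangle*\langle s_j\rangle$, you get the contradiction.

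Alternatively, you can cite directly that bounded subgroups of a discontinuous group are finite, as the paper does in the proof of Proposition~\ref{1.3}, rather than arguing via compactness. With either change your proof is complete.
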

\begin{proof} It suffices to consider the case $g=1$. For the tree of lines  $\overline{(\mathbb{P}^1_K,S)}$, there are the following possibilities:\\
(a) $L_1\cup L_2$ with $a_0,b_0$ mapped to $L_1$ and $a_1,b_1$ mapped to $L_2$.\\
(b) $L_1\cup L_2$ with $a_0,b_1$ mapped to $L_1$ and $a_1,b_0$ mapped to $L_2$.\\
(c) $L$ with the images of all four points.\\
A matrix computation shows that (a) is the only case of  good position.
\end{proof}

\noindent {\it Comments}.  Lemma \ref{4.1} states that a necessary condition for the position of the set of fixed point $S$ coincides with
the necessary and sufficient condition for the position of the branch locus of a potential Whittaker  curve. This follows from
Theorem~\ref{1.6} and Comments~\ref{newpairs}, stating that the branch locus has a canonical  division into pairs.\\

\noindent  {\it An example of bad position}. \label{bad position} The condition in Lemma \ref{4.1} is not sufficient for ``good position''. We consider the case  $g=2$, where 
there are three possibilities (a), (b) and (c) for $S$ satisfying 
the necessary condition of Lemma \ref{4.1} (see page~\pageref{picture2}).  Possibility (b) is given by the following (in)equalities.
\[ S=\{ \{0,b_0\}, \{ 1,b_1\}, \{b_2,\infty \}\}$ with $0<|b_0|<1,\ |b_1|=|b_1-1|=1,\ 1< |b_2|.\]
The reduction of $S$ consists of three lines $L_0,L_1,L_2$ and two nodes $L_0\cap L_1$ and $L_1\cap L_2$. Further $\{0, b_0\}$, $\{1,b_1\}$ and $\{b_2,\infty\}$ are mapped respectively on $L_0, L_1$ and $L_2$ (see page~\pageref{picture2}, Figure~1(b)).  For the values $b_1=-1, b_2=b_0^{-1}$ one finds $s_1s_2s_1=s_0$ and
for these values $S$ is not in good position.\\

{\it More generally}, also for values of $b_1$ close to $-1$, the pairs are not in good position. We note that
 ``restricted'' is equivalent to  $|b_1+1|=1$, see p.~\pageref{example5.2.2} (5.2.2) Second example. The condition $|b_1+1|=1$  is {\it sufficient }
  for good position, see Proposition~\ref{5.2} and Proposition~\ref{Prop7.3}.

 It seems impossible in this special case to give a ``closed formula'' for the $b_1$ such that $S$ has good position. However,  an algorithm for identifying good position in case (b)  can be constructed.\hfill $\square$\\

\subsection{\rm Examples of good position of fixed points}
 \subsubsection{\it The closed disk position}
  Suppose we fix a point $\infty\in \mathbb{P}^1_K$.  Let $S=\{ \{a_i,b_i\}\}_{i=0,\dots , g}$ be contained in $K$ and suppose that
 there are disjunct closed disks $B_0^+,\dots ,B_g^+$ such that $\{a_i,b_i\}\subset B_i^+$ for all $i$. This defines the closed disk condition. We omit the proof
 that it is a good position.
 
      We note that this condition (for a suitable choice of $\infty$) is equivalent to the statement: The endlines of the tree $\overline{(\mathbb{P}^1_K,S)}$ are 
      $L_0,\dots , L_g$ and for $i=0,\dots ,g$, the images of $\{a_i,b_i\}$ are on $L_i$. 
      
  \noindent   {\it Example}. For $g=2$, the case (a) (see page~\pageref{picture2}) is the closed disk position. It is defined by the following. The tree $\overline{(\mathbb{P}^1_K,S)}$ has lines
      $L_0,L_1,L_2,L$ and nodes $L_i\cap L$ for $i=0,1,2$. For $i=0,1,2$ the images of $a_i,b_i$ are on $L_i$. \\
      
\subsubsection{\it The thesis of G.~Van Steen}
In \cite[p.~76]{S} G.~Van~Steen presents
 complicated conditions $G_1-G_4$ that are sufficient and ``almost'' necessary. The thesis has two statements
 with detailed, complete proofs:\\
 (i). For any Whittaker group, there is a choice of generators $s_0,\dots ,s_g$ such that $G_1-G_4$ holds.\\
 (ii). Fixed points $\{\{a_i,b_i\}\}$ satisfying  $G_1-G_4$ are in good position.\\

 
 In $G_1-G_4$ the following method and notations are used. One makes a choice for $\infty \in \mathbb{P}_K^1$. 
 Let $s$ be the element of order 2 with fixed points $a,b\neq \infty$. Consider the projective line
 $red_{\{a,b,\infty\}}$ over the residue field. On it are the points $\bar{a},\bar{b},\bar{\infty} ,\bar{s(\infty)}$. The preimage
 in $\mathbb{P}^1_K$ of $\bar{s(\infty)}$ is an open disk and is denoted by $B_s$.  

 We note that $\mathbb{P}_K^1\setminus B_s$ can be interpreted as a fundamental domain for the group $<s>$. The position
 of this fundamental domain depends on the choice of $\infty$. Write $B_i=B_{s_i}$ for $i=0,\dots ,g$. \\
 
 \noindent 
 $G_1$ reads: for $i\neq j$ one has $B_i\cap B_j=\emptyset$.\\
 Let $B_i^+$ denote the closed disk obtained from $B_i$ and put $\partial B_i=B_i^+\setminus B_i$.
 If the closed disks are mutually disjoint, then $G_2-G_4$ are satisfied. This is again the {\it ``closed disk position''}. \\
 
 
 Now we copy from Van Steen's thesis:\\
 $G_2$: $\forall i,j,k \in \{0,\dots ,g\}$ such that $B_i^+\cup B_j^+\subset \partial B_k$
  one has  $s_k(B_i^+)\cap B_j^+=\emptyset$.\\
 $G_3$: $\forall i,j,k \in \{0,\dots ,g\}$ such that $B_i^+\cup B_j^+\subset \partial B_k$
one has $|s_i(\infty )-s_ks_j(\infty)|\geq |s_i(\infty )-s_ks_i(\infty)|$.\\  
 $G_4$: $\forall i,k\in \{0,\dots ,g\}$ such that $B_i^+ \subset \partial B_k$
one has $s_k(B_i^+)\cap   B_i^+=\emptyset$.\\
The last condition is the special case of $G_2$ with $i=j$ and expresses that $s_is_k$ is hyperbolic.\\

\noindent{\it Remarks}.\\
(i). Suppose that $G_1-G_4$ hold. Then $\mathbb{P}^1\setminus B_0\cup \dots \cup B_g$ can be interpreted   as  fundamental domain 
for $<s_0,\dots ,s_g>$. In general, this differs from the fundamental domain constructed in Section~\ref{Section4}.   \\
(ii). The conditions in Theorem~5.7 of  the thesis of Kadziela \cite{Kad1},  define in fact the closed disk situation.\\
(iii). The set of fixed points, the set of ramification points, their positions, $W$ and $\Gamma$ are considered up to 
the action of $PGL(2,K)$.  For, say, the set of fixed points $S$ , one takes representatives for this action by
identifying three  elements of $S$ with $0,1,\infty$. Different choices of identification lead to slightly different formulas.

In \cite{Kad1}, choosing representatives is done as follows: Let $S$ satisfy the necessary condition of Lemma \ref{4.1}. The tree
$\overline{(\mathbb{P}^1_K,S)}$ has at least two end lines. Each end line $L$ has three special points, namely the images of $a_i,b_i$ for a unique $i$ and one node. Now one choices two end lines $L_1,L_2$ with corresponding points $a_i,b_i$ and $a_j,b_j$. The normalization, used by Kadziela, is  defined by $a_i=0, a_j=1,b_j=\infty$.  This normalization of $S$ will be called the {\it Rosenhain position}.\\

\subsubsection{\it Realizing finite trees of finite groups}
 A finite tree of finite groups $(T,G)$ consists of a finite tree $T$, for every vertex $v$ a finite group $G_v$ and for every edge a finite
 group $G_e$. Further, a morphism $G_e\rightarrow G_v$ is given if $v$ is a vertex of the edge $e$. 
 
 A realization of $(T,G)$ in
 $\mathbb{P}^1$ is a morphism $\tau$ of $T$ into the (generalized) Bruhat-Tits tree of $\mathbb{P}_K^1$ and morphisms of
 $G_v$ and $G_e$ to the stabilizers of $\tau(v)$ and $\tau(e)$ in $PGL_{2,K}$. One requires that the image of $\tau$ generates a
 discontinuous group and that the only relations between the generators are those defined by the object $(T,G)$.\\
 
 From a configuration $(P,m)$ one obtains a rather special finite tree of finite groups $(T,G)$. Here $T$ is the tree of $(P,m)$. 
 For a vertex $v$ one defines  $G_v=\{\pm 1\}$ if $v$ is odd and $G_v=\{1\}$ if $v$ is even. 
 For an edge $e$ one defines $G_e=\{\pm 1\}$ if $e$ is odd  and $G_e=\{1\}$ is $e$ is even. Further, each morphism 
 $G_e\rightarrow G_v$ is injective.  A $\mathbb{B}$-tree with configuration $(P,m)$  can be seen as a realization $\tau$  of $(T,G)$.\\

  {\it  An explicit proof that  ``restricted'' implies ``good position'' } is possible by applying criteria for realization
  of finite trees of finite groups. These criteria have been developed by G.~Van Steen, F.~Herrlich, H.H.~Voskuil et al., see
  \cite{S,He,vdP-V}. Most of these results are rather technical and difficult to use for proving ``good position''.  However, we will apply 
  the following result of \cite[Theorem 3.3. part (2)]{vdP-V}:\\
 
 \noindent  (*) {\it
     Let a finite tree of groups $(T ,G)$ be given and let $e = \{v_1, v_2\}$ be an edge. Let 
     $(T_ 1,G_1), (T_ 2,G_2)$ denote the trees of groups obtained by deleting the
edge $e$ and suppose $v_1 \in T_1, v_2 \in T_ 2$. An embedding $\tau$ of $(T ,G)$ is a realization if :\\
(a). The restriction of $\tau$ to   $(T_1,G_1)$ and  $(T_2,G_2)$ is a realization.\\
Let $\Gamma_1, \Gamma_ 2$ denote the resulting discontinuous subgroups of ${\rm PGL}_2(K)$.\\
(b). A lattice class $V \in [\tau(v_1), \tau (v_2)]$ exists with $V\neq  \tau(v_1), \tau (v_2)$, such that for each
$g_i \in \Gamma_i \setminus  \tau(G_e)$, $i = 1, 2$, one has that $V\neq g_1V,g_2V$ and $V \in [g_1V,g_2V ]$.\\ }

Condition \textit{(b)} can be difficult to verify. We note that this condition is satisfied in the case that the double point $p$ of $\tau (T)$, corresponding to the edge $e$,  is not a fixed point of any element of $\Gamma _1\cup \Gamma _2 \setminus \{1\}$
(here we assume $G_e=\{1\}$).
Indeed, a lattice class $V$, as in \textit{(b)}, corresponds to a blow up of $\tau (T)$ at $p$, which replaces $p$ by a projective line.
Now $V$ is not  invariant under some element of $\Gamma _1\cup \Gamma _2 \setminus \{1\}$ since this holds for $p$.

The realization $\tau$ of a finite tree of groups $(T,G)$ produces a reduction 
$red\colon \mathbb{P}^1\rightarrow \overline{(\mathbb{P}^1, \tau)}$. A point
$q\in  \overline{(\mathbb{P}^1, \tau)}(k)$ of this reduction is called {\it exceptional} if it is the image of a fixed point
of an element $g\neq 1$ in the group generated by $(T,G)$. \\

\noindent 
We apply  the above theorem (*) with \textit{(b)}  replaced by the stronger condition \\
 \textit{(c)}. The double point $p$ of $\tau(T)$, corresponding to $e$, is not exceptional for the realizations of
 $(T_1,G_1)$ and $(T_2,G_2)$.\\

\noindent
{\it Example}. Define $(T,G)$ by $T$ has two vertices $v_1,v_2$ and one edge $e=\{v_1,v_2\}$. Further, assume that
$G_{v_1}\cong G_{v_2}\cong \{\pm 1\}$ and $G_{e}=\{1\}$. Consider a realization $\tau$ where $\tau (G_{v_i})=<s_i>$
and $s_i\in {\rm PGL}_2(K)$ is the involution with fixed points $a_i,b_i$ for $i=1,2$. The reduction 
$red\colon \mathbb{P}^1\rightarrow \overline{(\mathbb{P}^1,S)}$ with $S=\{a_1,b_1,a_2,b_2\}$ is supposed to consist of two lines $L_1,L_2$, 
intersecting normally at $p$. Further $\bar{a}_i=red(a_i), \bar{b}_i=red(b_i)\in L_i\setminus \{p\}$ for $i=1,2$.  
The action of $s_i$ on $L_i$ is defined by
its fixed points $\bar{a}_i,\bar{b}_i$ for $i=1,2$.  An easy computation shows that the set
of exceptional points is $\{\bar{a}_1,\bar{b}_1, \bar{a}_2,\bar{b}_2, s_1(p),s_2(p)\}$.  \\

\begin{proposition}\label{5.2} A restricted map $M\colon \mathbb{B}\rightarrow \mathbb{P}^1$ has good position. \end{proposition}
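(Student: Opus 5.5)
We would prove Proposition~\ref{5.2} by induction on the number of edges of the tree $P$, using the realization criterion (*) of \cite[Theorem 3.3]{vdP-V} with condition \textit{(b)} replaced by the stronger condition \textit{(c)}.

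\emph{Setup.} A restricted $M$ of type $(P,m)$ gives the tree of groups $(T,G)$ of the previous subsection, and the $\mathbb{B}$-tree $\overline{(\mathbb{P}^1,M(\mathbb{B}))}$ is an embedding $\tau$ of it. Good position of $M$ is the same as $\tau$ being a realization, since the group generated is then $\langle s_0\rangle *\cdots *\langle s_g\rangle$ and discontinuous.

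\emph{Base case.} A single vertex of type (c) carrying one pair, or the two-line example computed just before the proposition, is trivially a realization.

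\emph{Inductive step.} Choose an edge $e=\{v_1,v_2\}$ of $P$ and delete it. This gives subtrees $(T_1,G_1)$ and $(T_2,G_2)$ together with the restrictions $\tau_1,\tau_2$.
\begin{itemize}
\item If $e$ is even, each side carries a union of whole pairs. After contracting components that become unstable, each side is again a configuration in the sense of Definitions~\ref{2.4}.
\item If $e$ is odd, $G_e=\{\pm1\}$ and the pair split by $e$ must be kept on both sides through the edge group. Here we would rather cut at an even edge whenever one exists. If all edges are odd, we would reduce to Example~(5.2.3) by contracting the odd edge joining the two type (b) vertices, as noted there.
\end{itemize}
Restrictedness passes to the pieces: deleting $e$ only removes a node from $nodes(L)$ for the lines $L$ at $v_1,v_2$. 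The condition $nodes(L)\cap s_L(nodes(L))=\emptyset$ is inherited, and the lines $L$ keep their parity as long as $e$ is even. So condition \textit{(a)} of (*) holds by induction, giving discontinuous groups $\Gamma_1,\Gamma_2$.

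\emph{Key lemma.} It remains to verify \textit{(c)}: the node $p=L_{v_1}\cap L_{v_2}$ is not exceptional for $\tau_1$ (on $L_{v_1}$) nor for $\tau_2$ (on $L_{v_2}$). For this we would prove, simultaneously with the induction, the following description of exceptional points.
\begin{itemize}
\item For an odd line $L$ of a realized restricted $\mathbb{B}$-tree, the exceptional points on $L$ are exactly the two fixed points $p_1,p_2$ of $s_L$ together with the points $s_L(q)$, $q\in nodes(L)$.
\item For an even line, the exceptional points on $L$ are none other than the nodes themselves.
\end{itemize}
This generalizes the example computed before the proposition, where the exceptional set is $\{\bar a_1,\bar b_1,\bar a_2,\bar b_2,s_1(p),s_2(p)\}$. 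We would prove it by following a fixed point of an element $g\neq 1$ of $\Gamma_1$, written as a reduced word. Using the distance estimate of Remarks~\ref{new4.4}(2) (equivalently the non-neighbour property from Lemma~\ref{new4.2} for the realized subgroup), the image of that fixed point on $L$ lies either in the branch-point classes of $L$ or in a residue class $q$ or $s_L(q)$ reached through a node. Classes reached through nodes are exactly the nodes, and their $s_L$-translates only when $s_L\in\Gamma_1$ acts on $L$.

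Given the lemma, the new node $p$ is not exceptional on $L_{v_1}$:
\begin{itemize}
\item $p\ne p_1,p_2$ because $p$ is a node and the branch images are separated from the nodes;
\item $p\notin nodes(L)$ by construction;
\item $p\ne s_L(q)$ for $q\in nodes(L)\setminus\{p\}$, which is precisely the restricted hypothesis for $L_{v_1}$ in the full tree.
\end{itemize}
The same holds on $L_{v_2}$. Hence (*) applies and $\tau$ is a realization.

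\emph{Main obstacle.} We expect the hardest step to be the precise description of the exceptional sets, and in particular the odd-edge case where $G_e\neq 1$ and \textit{(c)} must be replaced by a direct check of \textit{(b)}. To handle it, we would first attempt the reduction to even edges via contraction, and treat Example~(5.2.3) (the only configuration with no even edge to cut) directly, as in the $g=2$ case (b) computation with $|b_1+1|=1$.
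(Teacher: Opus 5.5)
Your proposal is essentially the paper's proof: induction, the criterion (*) with (b) strengthened to (c), and a simultaneously proved description of exceptional points which, once the nodes themselves are included on odd lines, is the paper's statement (**) (your bullet omits the nodes, though your check $p\notin nodes(L)$ uses them). Your odd-edge worry does not arise: an end vertex can be of neither type (a) nor (b), hence is of type (c) with an even edge, so an even edge always exists (Example~(5.2.3) has such edges too); the paper simply cuts off such an end vertex carrying $\{a_g,b_g\}$, so one side is just $\langle s_g\rangle$ with exceptional points $\bar a_g,\bar b_g$ only, and the exceptional set of $\Gamma=\Gamma_2*\langle s_g\rangle$ is then controlled by $E\cup\{s_L(p),\bar a_g,\bar b_g\}$.
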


We recall that ``$M$ restricted'' means that the (pointed) $\mathbb{B}$-tree $\overline{(\mathbb{P}^1, M(\mathbb{B}) ) }$ is restricted.
{\it We sketch a proof,} by induction on $g$, of the following statement:\\

\noindent 
(**) {\it A restricted map $M\colon \mathbb{B}\rightarrow \mathbb{P}^1$ of genus $g$ has good position and its set of exceptional points on the reduction   $\overline{(\mathbb{P}^1, M(\mathbb{B}))}$ is contained 
in the union of the set of images of $\mathbb{B}$, the set of all double points and the set of all points $s_L(p)$, where $L$ is an odd component of the reduction, $s_L$ is the involution on $L$ and $p$ is a double point on $L$. \\ }

The case $g=1$ is the example above. Consider a restricted $M$ with configuration $(P,m)$ and genus $g$. 
Let $(T,G)$ be its associated finite tree of finite groups. We consider a suitable end vertex $v$ and and edge $e=(v,w)$. We may suppose that $a_g,b_g$ are mapped to the vertex $v$.

 Put $\mathbb{B}^*=\{a_0,b_0,\dots , a_{g-1},b_{g-1}\}$.
By deleting $e$ we obtain a tree $(T_1,G_1)$ consisting of the vertex $v$ and group $\{\pm 1\}$ and the tree $(T_2,G_2)$, obtained by deleting $v$. The first tree is clearly a realization and only the images of $a_g,b_g$ are exceptional points.
 The second tree corresponds to a restricted $M^*\colon \mathbb{B}^*\rightarrow \mathbb{P}^1$ and has, by the induction hypothesis,  good position and moreover
 the double point $p$ corresponding to the edge $e$ is not an exceptional point since $M$ is restricted.
 Thus the conditions (a) and (c) hold for $M$ and one concludes by the theorem (*) that $M$ has good position. 
  Let $s_g$ denote the involution with fixed points $a_g,b_g$. 
Write $\Gamma _2, \Gamma$ for the groups generated by $(T_2,G_2)$ and $(T,G)$. Then $\Gamma$ is the free product 
$\Gamma_2*<s_g>$.

   Let $L$ denote the component of the reduction of $M$ corresponding to $v$ and let $s_L$ denote the involution on $L$. The exceptional points 
 of $M$ are contained in the union of the following sets:  $E$, the exceptional points  of $M^*$ (known by the induction hypothesis);
 the fixed points of $s_L$ and its conjugates by elements in the group generated by $\Gamma_2$ and the fixed points of the
 elements $s_Lss_L^{-1}$ with $s$ an involution in $\Gamma_2$. An inspection of the tree generated by $(T,G)$ shows that the set of exceptional points is contained  $E\cup \{s_L(p), \bar{a}_g,\bar{b}_g\}$. This verifies the second part of (**).  \hfill $\square$\\

\bigskip

 \subsection{\rm The genus two cases}\label{subs7.2}

The necessary condition in Lemma \ref{4.1} for the set of branch points $S:=\{\{a_0,b_0\}, \{a_1,b_1\}, \{a_2,b_2\}\}$
leads to the three possibilities a), b) and c) of Figure~1 on page~\pageref{picture2} (see also \cite[ p.~281]{G-vdP}).

\bigskip
\noindent {\it Case a)}. This is the closed disks position. One can normalize this position in 48 ways to $a_0=0, a_1=1, a_2=\infty$. 
Normalization to Rosenhain position $a_0=0, a_2=1,b_2=\infty$ can be done in 24 ways.\\
\noindent  {\it Case b)}.  One can normalize this position to  $0<|b_0|<1=|b_1|=|b_1-1|<|b_2|$ and $a_0=0$, $a_1=1$, $a_2=\infty$,
in 24 ways. For the Rosenhain position $a_0=0, a_2=1, b_2=\infty$ there are 8 possibilities.\\ 
\noindent {\it Case c)}. We normalize this position to  $a_0=0, a_1=1, a_2=\infty$ and  $0<|b_0|<|b_1|<1<|b_2|$. There are 8 possibilities.
For the Rosenhain position $a_0=0, a_2=1,b_2=\infty$ there are also 8 possibilities.
 \begin{proposition}\label{Prop7.3} The three cases are normalized by  $a_0=0,a_1=1,a_2=\infty$.
  Good position holds for the cases a) and c) and
   for the case b) under the assumption $|b_1+1|=1$.
  In case b) with $|b_1+1|<1$, the position is in general not good.
 \end{proposition}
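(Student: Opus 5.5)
We reduce all three statements to results already in place: the closed disk position for case a), Proposition~\ref{5.2} (restricted implies good position) for cases b) and c), and the explicit example of bad position given after Lemma~\ref{4.1} for the negative statement.

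\emph{Case a).} Here $a_i,b_i$ map to the end line $L_i$ of the reduction $L\cup L_0\cup L_1\cup L_2$. Moving $\infty$ to a point whose image lies on the central line $L$, the pairs lie in three disjoint closed disks, so this is the closed disk position. We can also check it is restricted: the lines $L_i$ are odd with a single node, and $L$ is even, so ``restricted'' imposes nothing (compare Example~(5.2.1)). Proposition~\ref{5.2} then gives good position.

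\emph{Case c).} The normalization $0<|b_0|<|b_1|<1<|b_2|$ with $a_0=0,a_1=1,a_2=\infty$ gives the reduction of Figure~1(c): two end lines carrying $\{a_0,b_0\}$ and $\{a_2,b_2\}$, and two middle lines $L',M'$ carrying $b_1$ and $a_1$, joined by a node. This is the third standard configuration (5.2.3) with $n=m=1$. Each of $L',M'$ is odd and has exactly one node besides the node $L'\cap M'$. The fixed points of $s_{L'}$ are the two points with odd fibre, namely the image of $b_1$ and the node $L'\cap M'$. So $nodes(L')$ consists of a single point $q$, and $nodes(L')\cap s_{L'}(nodes(L'))=\emptyset$ holds because $q$ is not fixed by $s_{L'}$. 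The same argument applies to $M'$, and the end lines carry one node each. Hence the map is restricted, as already remarked in (5.2.3) for $n=m=1$, and Proposition~\ref{5.2} gives good position. The only real work here is checking that the normalization in c) matches this configuration, which is a routine cross-ratio check.

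\emph{Case b).} The configuration is the special case of (5.2.2) with $g=2$: the central line $L_1$ carries the images of $1,b_1$, and the nodes $p_0=L_0\cap L_1$, $p_2=L_1\cap L_2$ sit at the reductions of $0$ and $\infty$. On $L_1$ the involution $s_{L_1}$ fixes $\bar 1$ and $\bar b_1$. The restriction condition reads $s_{L_1}(\bar0)\neq\bar\infty$. Since the involution fixing $1,b_1$ maps $0$ to $b_1$ and $\infty$ to $(1+b_1)/2$, the map $z\mapsto s(z)$ sends $0$ to $\infty$ exactly when $b_1=-1$. Thus the condition becomes $\overline{b_1}\neq-1$, i.e.\ $|b_1+1|=1$; this uses residue characteristic $\neq2$. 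Under this assumption Proposition~\ref{5.2} yields good position.

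\emph{The negative statement.} We take $b_1=-1$ and $b_2=b_0^{-1}$. A direct matrix computation, with $s_0(z)=b_0z/(2z-b_0)\cdot$ (normalized as the involution with fixed points $0,b_0$), shows $s_1s_2s_1=s_0$. Hence $\langle s_0,s_1,s_2\rangle$ is not a free product and the position is not good, as noted in the example of bad position after Lemma~\ref{4.1}. For nearby $b_1$ with $|b_1+1|<1$ we would argue that $s_0$ and $s_1s_2s_1$ become close, so the group is not discontinuous except on a thin set. This is the step I expect to be delicate; since the statement only claims failure ``in general'', exhibiting the explicit bad family suffices.
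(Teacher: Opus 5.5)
Your proposal is correct and is essentially the paper's own proof: the good-position cases are deduced from Proposition~\ref{5.2}, where you spell out the restrictedness checks the paper leaves implicit, including that restrictedness in case b) is exactly $|b_1+1|=1$. The negative statement is the explicit example $b_1=-1$, $b_2=b_0^{-1}$ with $s_1s_2s_1=s_0$, exactly as in the paper.

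There are two small corrections. First, the involution fixing $1,b_1$ sends $0$ to $2b_1/(1+b_1)$, not to $b_1$, which it fixes; your conclusion is unaffected. Second, drop the ``thin set'' heuristic, because it is false. For $\max(|b_0|,|b_2|^{-1})<|b_1+1|<1$, the pairs $\{0,b_0\}$, $\{1,b_1\}$, $\{s_1(\infty),s_1(b_2)\}$ are in closed disk position. Replacing $s_2$ by $s_1s_2s_1$ then shows that $S$ is in good position on that whole open region.
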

\begin{proof}  The first part follows from Proposition~\ref{5.2}. Another proof consists of verifying the conditions of
Herrlich's criterion for amalgames, see \cite{He}.

 A proof, using \cite{S}, goes as follows. The conditions $G_1- G_4$ of \cite{S} are formulated under the assumption that $\infty$ is not a fixed point. In order to arrive at that situation, one applies an automorphism $\phi$ of $\mathbb{P}^1_K$ of the form $\phi (0)=0,\phi(1)=1,\phi(\infty)= \lambda b_2$ with $|\lambda |=|\lambda -1|=1$ for suitable $\lambda$. One of the choices $\lambda =2,3,4$ (depending on the residue characteristic) is suitable and
 one can verify $G_1- G_4$.  \\

Finally, the case $b_1=-1$ is already studied in ``An example of bad position'' on page \pageref{bad position}. \end{proof}

\noindent{\it The examples in Kadziela's work \cite{Kad1,Kad2}.}\\
 These examples are Mumford curves over $\mathbb{Q}_5$ of genus 2 and have the property that the {\it branch points} 
 are in a position of type b) and moreover such that $|b_1+1|<1$.
The computations in \cite{Kad1} produce approximations for generators  $g_1,g_2$ of $W$. 
As before, $\Gamma =\langle s_0,s_1,s_\infty\rangle$ with $s_0=g_2s_\infty$, $s_1=g_1s_\infty$ and $s_\infty(z)=2-z$.  

The first example is $y^2=x(x-1)(x-5)(x-195)(x-125)$ in \cite{Kad1}. The ramification locus can be normalized to  
$\{0,5^2\}, \{1,39\}, \{1/5, \infty \}$ and this is case b) with $|b_1+1|<1$. The fixed points are approximately
$\{ \{5^4\cdot 2,5^3\},\{5\cdot 3,5\},\{1,\infty \}\}$. Division by $5$ puts this in the normalised form above and we conclude that
{\it the fixed points } are in position b) with a new $b_1$ satisfying $|b_1+1|=1$. This case of good position 
guarantees that the product formulas for the  corresponding theta functions are convergent.

  \section{\rm Theta functions for genus two curves.}\label{explicit1}
  
  Recall that
  the morphism $FB\colon Fix_{P,m}\rightarrow Branch_{P,m}$ is given by 
  \[(a_0,b_0,\dots ,a_g,b_g)\mapsto (F(a_0),F(b_0),\dots ,F(a_g),F(b_g))\] where $F$ is the standard theta function
  $F=\prod _{\gamma \in \Gamma}\frac{z-\gamma(a)}{z-\gamma (b)}$ and $\Gamma =<s_0,\dots ,s_g>$. Further $s_i$
  is, for $i=0,\dots ,g$, the reflection with fixed points $a_i,b_i$.\\
  
  For $g=2$, the elements $(a_0,\dots ,b_2)$ of  $Fix_{P,m}$ and of $Branch_{P,m}$ are normalized by $a_0=0, a_1=1$, $a_2=\infty$. 
  Moreover, we normalize $FB$ by 
   \[(b_0,b_1,b_2)\mapsto (\frac{F(b_0)}{F(b_0)-1}, \frac{F(b_1)}{F(b_1)-1}, \frac{F(b_2)}{F(b_2)-1}),\]
  and $F=\prod _{\gamma \in \Gamma}\frac{z-\gamma(0)}{z-\gamma (1)}$. 
  The theta function $F$ is approximated by $F^*$, which is the product taken over all $\gamma \in \Gamma$ with 
  lenght $\leq 2$, compare Observations~\ref{convergence}. For the three configurations $(a), (b), (c)$ with genus two, see Figure~1 on page~\pageref{picture2}, we compute an approximation of $FB$.

   \subsection{\rm Configuration (a)}
   Here  $Branch_{(a)}:=\{(B_0,B_1,B_2)\in  K^3|\ 0<|B_0|,|B_1|,|B_2|<1\}$ as $K$-analytic space and  
   \[a_0=0, b_0=B_0,a_1=1,b_1=1+B_1,a_2=\infty, b_2=B_2^{-1},0<|B_0|,|B_1|,|B_2|<1.\]
  $Fix_{(a)}$ is the same space since ``restriction'' is automatically satisfied.  The approximation of $FB$ is
 \[(B_0,B_1,B_2)\mapsto (2B_0(1+\epsilon),2B_1(1+\epsilon),2B_2(1+\epsilon)),\]
 where each $\epsilon$ denotes a function which is, in absolute value, strictly less than~1. Note that this
confirms a special case of Theorem~\ref{new5.8}:

  \begin{proposition}\label{new8.1}
$FB\colon Fix_{(a)}\rightarrow Branch_{(a)}$ is an analytic isomorphism.
\end{proposition}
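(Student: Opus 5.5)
The plan is to show directly that the approximation of $FB$ already determines an isomorphism of the polydisk $\{0<|B_i|<1\}$ onto itself. Corollary~\ref{5.7} and Theorem~\ref{new5.8} give the same conclusion abstractly: in the closed disk case there are only two fundamental domains, $F_0$ and $\sigma F_0$ (see \ref{5.3.1}), so $d=1$ and $FB$ is bijective with trivial Galois group. The point here is to give an independent analytic check from the theta function formula.

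First, compute $F^*$, the truncation of $F=\prod_{\gamma}\frac{z-\gamma(0)}{z-\gamma(1)}$ to words of length $\le 2$. I would do this in the normalization $a_0=0,b_0=B_0,a_1=1,b_1=1+B_1,a_2=\infty,b_2=B_2^{-1}$ and evaluate at $b_0,b_1,b_2$. Take the length $0$ term $\frac{z}{z-1}$ together with the length one terms $s_i(0),s_i(1)$, using explicit formulas for the involutions such as $s_0(z)=\frac{B_0z}{2z-B_0}$ and its analogues. A direct computation should give $\frac{F(b_i)}{F(b_i)-1}=2B_i\cdot u_i$, where $u_i$ collects the remaining factors. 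The factor $2$ comes from the pair $z,\ s_i$-image near the fixed points. This is where $\mathrm{char}\, k\neq 2$ enters: $|2|=1$.

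Second, show that every factor not accounted for lies in $1+O^{oo}$, with $O^{oo}$ meaning sup-norm $<1$ on the polydisk. For words of length $\ge 2$ this follows from Observations~\ref{convergence}: each such factor differs from $1$ by at most $|\pi|^{\mathrm{length}-1}$, and uniformly so, since the sizes of the double points are governed by $|B_0|,|B_1|,|B_2|$. The length $\le1$ corrections have to be checked by hand; each is a rational function in the $B_j$ equal to $1$ modulo terms of norm $<1$. This yields $FB(B)=(2B_0(1+\epsilon_0),2B_1(1+\epsilon_1),2B_2(1+\epsilon_2))$ with $\|\epsilon_i\|<1$ analytic on $Fix_{(a)}$. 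I expect this to be the main obstacle. The $\epsilon_i$ must have norm strictly below $1$ uniformly on the whole open polydisk, not only pointwise. I would handle this on each affinoid $|B_i|\le r<1$ and then pass to the union.

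Third, conclude that the map is an isomorphism. Write $FB=\Lambda\circ(\mathrm{id}+h)$ with $\Lambda(B)=2B$, where $h_i=B_i\epsilon_i$ satisfies $|h_i|<|B_i|$. On each closed polydisk of radius $r$, a multiplicative Newton iteration (equivalently the non-archimedean inverse function theorem) shows that $B\mapsto (B_i(1+\epsilon_i))_i$ is bijective and preserves every shell $|B_i|=\rho_i$. Given a target $C$ with $0<|C_i|<1$, solve $B_i=\frac{C_i}{2(1+\epsilon_i(B))}$ by contraction. The inverse is analytic, and the condition $0<|B_i|<1$ is preserved because $|2(1+\epsilon_i)|=1$. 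Hence $FB\colon Fix_{(a)}\rightarrow Branch_{(a)}$ is an analytic isomorphism, in agreement with $d=1$ in Theorem~\ref{new5.8}.
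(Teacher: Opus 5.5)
Your route is the paper's: truncate $F$, read off $FB(B)=(2B_0(1+\epsilon),2B_1(1+\epsilon),2B_2(1+\epsilon))$, and conclude; the case $d=1$ of Theorem~\ref{new5.8} serves as a consistency check. The paper only asserts this approximation. Your justification of it has a gap in step 2, for the third coordinate.

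\textbf{Where step 2 fails.} Since $a_2=\infty$ and $F(\infty)=1$, the coordinates of $FB(B)$ in $Branch_{(a)}$ are $\frac{F(b_0)}{F(b_0)-1}$, $\frac{1}{F(b_1)-1}$ and $\frac{F(b_2)-1}{F(b_2)}$. Your formula $\frac{F(b_i)}{F(b_i)-1}=2B_iu_i$ holds only for $i=0$: for $i=1,2$ the left side is about $1+2B_1$, resp.\ $(2B_2)^{-1}$.
\begin{itemize}
\item At $b_0$ and $b_1$, where $F$ is near $0$ resp.\ $\infty$, you only need multiplicative control of $F$, and the bound $|\pi|^{\mathrm{length}-1}<1$ suffices.
\item At $b_2$ you need the \emph{additive} statement $F(b_2)-1=2B_2(1+\epsilon_2)$. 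Every factor other than the two leading ones must then be $1+\delta_\gamma$ with $|\delta_\gamma|$ strictly smaller than $|B_2|$.
\item The bound you quote from Observations~\ref{convergence} only gives $|\delta_\gamma|\le|\pi|^{\mathrm{length}(\gamma)-1}$ with $|\pi|=\max_j|B_j|$. This says nothing once $|B_2|\le\max(|B_0|,|B_1|)$: it already fails at length $2$, and fails at any truncation length $N$ when $|B_2|<|\pi|^N$.
\item Likewise, ``$1$ modulo terms of norm $<1$'' is the wrong test for the length-one factors at $b_2$. The $s_0$-factor there is $\bigl(1-\frac{B_0B_2}{2-B_0}\bigr)^{-1}$, and it is harmless only because its deviation is $<|B_2|$.
\end{itemize}
So step 2 as written does not give $|\epsilon_2|<1$. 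The uniformity issue you single out is not the real obstacle, since pointwise bounds suffice.

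\textbf{The fix.} Use the sharper fact contained in the proof of Observations~\ref{convergence}. The quantity $\bigl|\frac{z-\gamma(0)}{z-\gamma(1)}-1\bigr|$ equals the product of the sizes of the double points of $\overline{\Omega}$ along the path from the component where $z,\infty$ separate to the vertex where $\gamma(0),\gamma(1)$ separate.
\begin{itemize}
\item For $z=b_2$, that component is the line $\hat L_2$ carrying the fixed points $a_2=\infty$ and $b_2$ of $s_2$. Both of its nodes have size $|B_2|$.
\item The points $\gamma(0),\gamma(1)$ never separate on $\hat L_2$ itself, since both lie beyond the node towards $\gamma\hat p$.
\item The path has length one only for $\gamma\in\{1,s_2\}$, and each of these gives the factor $\frac{b_2}{b_2-1}=\frac{1}{1-B_2}$.
\end{itemize}
Hence $F(b_2)=(1-B_2)^{-2}(1+\eta)$ with $|\eta|\le|B_2|\max_j|B_j|$. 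Using $|2|=1$, this gives $F(b_2)-1=2B_2(1+\epsilon_2)$ with $|\epsilon_2|\le\max_j|B_j|$. The same path argument gives $|\epsilon_i|\le\max_j|B_j|$ for $i=0,1$.

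\textbf{Step 3.} With this estimate your step 3 works, but run the contraction on the affinoid shells $\{|B_j|=|C_j|\}$ rather than on closed polydisks. The $\epsilon_i$ are not known to be analytic at $B_j=0$, which lies outside $Fix_{(a)}$. On a shell, the Laurent expansion gives
\[|\epsilon_i(B)-\epsilon_i(B')|\le\|\epsilon_i\|\max_j\frac{|B_j-B'_j|}{|C_j|},\qquad \|\epsilon_i\|\le\max_j|C_j|<1.\]
This is exactly the contraction estimate for $B\mapsto\bigl(C_i/(2(1+\epsilon_i(B)))\bigr)_i$. Since $FB$ preserves each shell, the fixed point is the unique preimage of $C$.
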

 
\begin{remarks}  \mbox{ }\\
(1). The configuration (a) has been studied in detail by J.~Teitelbaum \cite[\S 3.2]{T}. A main result, Lemma~35,
states that the map 
\[\lambda \colon\mathcal{M}=\{ (p_1,p_2,p_3) \ | \ 0<|p_1|,|p_2|, |p_3|<1\} \rightarrow Branch_{(a)} \]
from the space of half periods, to the space of branch points is a rigid analytic isomorphism.  For $\lambda$ and its inverse explicit approximations are produced. These formulas are applied to the 
genus two modular curves $X_0(p)$ over $\mathbb{Q}_p$ for $p=23,29,31$. 

We note that $p_1,p_2,p_3$ are  certain theta functions evaluated at fixed points. Therefore there is an explicit morphism
$Fix_{(a)}\rightarrow \mathcal{M}$  such that $Fix_{(a)}\rightarrow \mathcal{M}\stackrel{\lambda}{\rightarrow} Branch_{(a)}$ 
is the isomorphism $FB$ of Proposition \ref{new8.1}. \\

\noindent (2). For completeness we also consider {\it the Rosenhain normalization } for configuration (a).  
Now $Fix_{(a)}$  is  the space of pairs of fixed points are $\{0,b_0\}$, $\{ a_1,b_1\}$, $\{ 1,\infty\}$ with $0<|b_0|<|b_1|=|a_1|<1$ and $a_1=b_1(1+T)$ and $0<|T|<1$.   Write $b_0=B_0B_1, b_1=B_1,a_1=B_1(1+T)$, then one has 
$ Fix_{(a)}=\{ (B_0,B_1,T)\ |\  0<|B_0|,|B_1|,|T| <1\}$. The space $Branch_{(a)}$ has the same description.

  An approximation of $F:=\prod \frac{z-\gamma (0)}{z-\gamma (1)}$ yields the formulas
 \[\frac{F(b_0)}{F(b_0)-1}=4b_0(1+\epsilon),\ \frac{F(b_1)}{F(b_1)-1}=2b_1(1+\epsilon),\ \frac{F(a_1)}{F(a_1)-1}=2a_1(1+\epsilon).\]
 In terms of the variables $B_0,B_1,T$, the map $FB$ reads
  \[(B_0,B_1,T)\mapsto (2B_0(1+\epsilon), 2B_1(1+\epsilon), T(1+\epsilon)).\]
 This is again an analytic isomorphism.  
 The two normalizations of the  moduli spaces $Fix_{(a)}$ and $Branch_{(a)}$ are not quite the same. 
 The remark in \S \ref{subs7.2} on normalization  explains the factor 2 by which the formulas differ.  \\ 

\noindent (3). {\it Example: the modular curve $X_0(37)$ over $\mathbb{Q}_{37}$. }\\
  The Magma computational algebra system \cite{MAG}, using the command {\tt SmallModularCurve(37)}, presents the
equation $\eta^2-\xi^3\eta=2\xi^5-5\xi^4+7\xi^3-6\xi^2+3\xi-1$ for $X_0(37)$. With $x=\xi-1, y=2\eta-\xi^3$
this is transformed into
\[y^2=g(x):=x^6+14x^5+35x^4+48x^3+35x^2+14x+1.\]
 Here $g(x)$ is, modulo $37$, equal to $(x-1)^2(x^2+8x+1)^2$. This implies that the branch points lie in the field
 $\mathbb{Q}_{37}(\sqrt{15})$  and have the form 
 \[1+\epsilon_1, 1+\epsilon_2, -4+\sqrt{15} +\epsilon_3,
   -4+\sqrt{15} +\epsilon_4, -4-\sqrt{15} +\epsilon_5, -4-\sqrt{15} +\epsilon_6,\]
 where all $|\epsilon_* |<1$. This is configuration (a). After a M\"{o}bius transformation, sending, say,  
 $1+\epsilon_1,-4+\sqrt{15}+\epsilon_3, -4-\sqrt{15}+\epsilon_5$ to  $0,1,\infty$, approximating the
 theta function $F(z)=\prod \frac{z-\gamma (0)}{z-\gamma (1)} $ by restricting to $\gamma$'s of length $\leq 1$ results in
fixed points approximated as \[0,1,\infty, \frac{\sqrt{15}}{10}(\epsilon_1-\epsilon_2), \frac{15+4\sqrt{15}}{60}(\epsilon_3-\epsilon_4), 
-\frac{60+16\sqrt{15}}{\epsilon_5-\epsilon_6}.\]

   \color{black}
   \end{remarks}
   
   \subsection{\rm Configuration (b)} 
   
In this case ``restricted'' (pointed) $\mathbb{B}$-tree is really a restriction and the analytic space of fixed points $Fix_{(b)}$ is given, in normalized form, by 
\[a_0=0, a_1=1, a_2=\infty, 0<|b_0|<1, |b_1|=|b_1-1|=|b_1+1|=1,1<|b_2|. \] 
The space $Branch_{(b)}$ of branch points is given by
\[a_0=0, a_1=1, a_2=\infty, 0<|b_0|<1, |b_1|=|b_1-1|=1<|b_2|.\] 
 The map $FB\colon Fix_{(b)}\rightarrow Branch_{(b)}$ from fixed points to branch points is  
 \[(b_0,b_1,b_2) \rightarrow  (\frac{F(b_0)}{F(b_0)-1}, \frac{F(b_1)}{F(b_1)-1}, \frac{F(b_2)}{F(b_2)-1}).\]
Approximation and the equality $|b_1+1|=1$, yield the following formulas:  
$\frac{F(b_0)}{F(b_0)-1}=\frac{4b_1}{b_1+1}b_0(1+\epsilon) $, $\frac{F(b_1)}{F(b_1)-1}=b_1^2 (1+\epsilon)$
and $\frac{F(b_2)}{F(b_2)-1}=\frac{b_1+1}{4}b_2(1+\epsilon)$.

\begin{proposition}\label{Prop8.3} The analytic map $Fix_{(b)}\rightarrow Branch_{(b)}$ is an unramified rigid analytic covering of degree two.
\end{proposition}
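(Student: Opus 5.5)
The plan is to read the statement directly off the approximate formulas just obtained for $FB$ on $Fix_{(b)}$. Introduce on $Branch_{(b)}$ the coordinates $(B_0,B_1,B_2)=\bigl(\frac{F(b_0)}{F(b_0)-1},\frac{F(b_1)}{F(b_1)-1},\frac{F(b_2)}{F(b_2)-1}\bigr)$. These satisfy $0<|B_0|<1$, $|B_1|=|B_1-1|=1$ and $1<|B_2|$. The map then reads
\[
B_0=\tfrac{4b_1}{b_1+1}\,b_0(1+\epsilon),\qquad B_1=b_1^2(1+\epsilon),\qquad B_2=\tfrac{b_1+1}{4}\,b_2(1+\epsilon),
\]
with each $\epsilon$ an analytic function of norm $<1$ on $Fix_{(b)}$. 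Since $|b_1+1|=1$ and the residue characteristic is $\neq 2$, the factors $\frac{4b_1}{b_1+1}$ and $\frac{b_1+1}{4}$ are units. So the first and third coordinates are, up to units depending analytically on $b_1$, just $b_0$ and $b_2$. The whole degree of the map is therefore concentrated in the middle coordinate $b_1\mapsto b_1^2(1+\epsilon)$.

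First I would treat the model map $b_1\mapsto b_1^2$ from $\{|b_1|=|b_1-1|=|b_1+1|=1\}$ to $\{|B_1|=|B_1-1|=1\}$. The condition $|b_1^2-1|=1$ is equivalent to $|b_1-1|=|b_1+1|=1$, so the model map is surjective onto the target and finite of degree $2$. It is étale because its derivative $2b_1$ is a unit. Its fibres are $\{\pm b_1\}$, which lie in residue classes that are distinct because $|2b_1|=1$.

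Next I would show that the true map is a small perturbation of the model. Fix $B_1$ and a square root $\beta$ with $\beta^2\equiv B_1$, then solve $b_1^2(1+\epsilon(b_0,b_1,b_2))=B_1$ together with the other two equations by a Newton/Hensel argument on the product of discs. Each coordinate equation has the form ``unit times coordinate $=$ target'' in its respective disc. Because the relevant Jacobian is a unit modulo $K^{oo}$ (the residue of $\partial(\text{map})/\partial(b_0,b_1,b_2)$ is diagonal with unit entries after rescaling), the perturbation $\epsilon$ does not change the local degree. This gives, above every point of $Branch_{(b)}$, exactly two preimages, one near $b_1\approx\beta$ and one near $b_1\approx-\beta$. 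Each is a local analytic isomorphism, which proves that the map is unramified of degree two. As a cross-check, the fibre count agrees with Corollary~\ref{5.7}: for case (b) there are $2^2$ fundamental domains, so $d=2$ and each fibre has $2^{d-1}=2$ elements. This also agrees with the description of the two choices of omitted nodes in case (b).

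The main obstacle I expect is making the error terms $\epsilon$ uniform. I need $\|\epsilon\|<1$ as analytic functions on all of $Fix_{(b)}$, including near the boundary where $|b_0|\to 1$ or $|b_2|\to 1$, together with a corresponding bound on their derivatives, so that the Hensel step is valid everywhere and not only pointwise. I would obtain this from Observations~\ref{convergence}: each factor of $F$ with $\gamma$ of length $\geq 2$ differs from $1$ by at most $|\pi|^{-1+\mathrm{length}(\gamma)}$, uniformly on an affinoid exhaustion of $Fix_{(b)}$. I would then prove the covering statement on each member of that exhaustion and glue.
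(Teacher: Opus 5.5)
Your proposal is correct and takes the same route as the paper. The paper also solves the leading-order equations $b_1^2=\tilde b_1$, $b_0=\frac{b_1+1}{4b_1}\tilde b_0$, $b_2=\frac{4}{b_1+1}\tilde b_2$, and uses $|b_1-1|\,|b_1+1|=|\tilde b_1-1|=1$ to see that both square roots give initial solutions in $Fix_{(b)}$. It then simply asserts that the stepwise refinement is unique and convergent. Your unit-Jacobian/Hensel argument, with uniform control of $\epsilon$ on an affinoid exhaustion, makes that step explicit; note that the square root $\beta$ of $\tilde b_1$ in general only exists over an unramified quadratic extension of $K$, as the paper itself remarks in \S 8.3.
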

\begin{proof} Let be given the pairs of branch points $0,\tilde{b}_0$, $1,\tilde{b}_1$, $\infty, \tilde{b}_2$ in position (b), i.e.,
$|\tilde{b}_1-1|=1$ and $0<|\tilde{b}_0|<1=|\tilde{b}_1|<|\tilde{b}_2|$   (we note that this is the general situation). \\
 
 The initial equations for $b_0,b_1,b_2$ are $b_1^2=\tilde{b}_1,\ b_0=\frac{b_1+1}{4b_1}\tilde{b}_0 ,\ b_2=\frac{4}{b_1+1}\tilde{b}_2$.
Choose a solution $b_1$ of the first equation. Then $|b_1-1||b_1+1|=|\tilde{b}_1-1|=1$ and thus 
$|b_1-1|=|b_1+1|=1$. Further $|b_0|=|\tilde{b}_0|$ and $|b_2|=|\tilde{b}_2|$. Thus we find two initial solutions in space $Fix_{(b)}$. 
The stepwise refining of the initial solution is a unique and convergent process. These lead to actual solutions, since good 
position and convergence of the theta function $F$ is guaranteed by $|b_1+1|=|b_1-1|=|b_1|=1$.
\end{proof}

 \begin{remarks} $\ $ 

\vspace{\baselineskip}\noindent
(1). {\it The Rosenhain  normalization for case (b).}\\
 The fixed points are $\{0,b_0\}, \{a_1,b_1\}, \{1,\infty\}$ with $0<|b_0|<|b_1|=|a_1|<1$ and $a_1=Tb_1$ with 
 $|T|=|T-1|=|T+1|=1$.  This is the ``restricted'' case.
 The approximation of $F$ produces the formulas for the branch points \begin{footnotesize}
 \[\frac{F(b_0)}{F(b_0)-1}=\frac{16T}{(T+1)^2}b_0(1+\epsilon),\ \frac{F(a_1)}{F(a_1)-1}=\frac{4T}{T+1}a_1(1+\epsilon),
  \  \frac{F(b_1)}{F(b_1)-1}=\frac{4}{T+1}b_1(1+\epsilon).   \] \end{footnotesize}
 The branch points are  in position (b) and for the new
  $\tilde{T}=\frac{F(a_1)}{F(a_1)-1}(\frac{F(b_1)}{F(b_1)-1})^{-1}$ one has
 again $|\tilde{T}|= |\tilde{T}-1|= |\tilde{T}+1|=1$.  
 Write $b_0=B_0B_1, b_1=B_1,a_1=B_1T$. Then \[(B_0,B_1,T)\mapsto (\frac{16T}{(1+T)^2}B_0,\frac{4T}{1+T}B_1,T^2)\]
 is an approximation of $FB$.\\
 Thus the Rosenhain position produces, up to constants, the same formulas. \\
 
 \noindent (2). {\it Kadziela's example revisited, \cite{Kad1}}. \\
  The equation of this curve over $\mathbb{Q}_5$  is $y^2=x(x-1)(x-5)(x-195)(x-125)$. The branch points can be
  normalized to $0,25,1,39,1/5,1/25$ and this yields that $\tilde{b}_0=5^2(1+O(5)), \tilde{b}_1=4+O(5)$, and $\tilde{b}_2=1/5\cdot(1+O(5))$. 
  
  For the fixed points of the two generators $g_1,g_2$ of $\Gamma$, 
  there are two initial solutions, namely  \[b_0= 5^2\cdot(1+O(5)), \; b_1=2+O(5),\; b_2=1/5\cdot(3+O(5))\] 
and 
  \[b_0= 5^2\cdot(2+O(5)), \; b_1=3+O(5),\; b_2=1/5\cdot(1+O(5)).\] 
  The second choice for the initial solutions fits the data for the matrices $g_1,g_2$, computed in \cite{Kad1}.\\
  \end{remarks}

\subsection{\rm Configuration (c)}  Here the space of branch points $Branch_{(c)}$ is normalized to  $\{0,b_0\}, \{b_1,1\}$, $\{b_2,\infty\}$ with the inequalities $0<|b_0|<|b_1|<1<|b_2|<\infty $. Write $b_0=B_0B_1$, $  b_1=B_1$, $b_2=B_2^{-1}$.

 Then $ Branch_{(c)}=  \{   (B_0,B_1,B_2)\  |\  0<|B_j|<1\mbox{ for } j=0,1,2  \}$. All $\mathbb{B}$-trees of type (c) are ``restricted'' and thus we have $Fix_{(c)}=Branch_{(c)}$.  The morphism $FB\colon Fix_c\rightarrow Branch_c$ is approximated by the formula

$(B_0,B_1,B_2)\mapsto (4B_0(1+\epsilon),B_1^2(1+\epsilon),4B_2(1+\epsilon))$. This implies another special
case of Theorem~\ref{new5.8}:
\begin{proposition} \label{5.4} For configuration (c), one has $Fix_{(c)}=Branch _{(c)}$ and 
 $FB\colon Fix_{(c)}\rightarrow Branch_{(c)}$ is an unramified covering of degree two.
 \end{proposition}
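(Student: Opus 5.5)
We follow the pattern of the proofs of Propositions~\ref{new8.1} and \ref{Prop8.3}: the approximation of $FB$ displayed above determines the map up to unit factors, and we then solve the resulting equations by successive approximation. That $Fix_{(c)}=Branch_{(c)}$ is the observation recorded in the third example (5.2.3) and in Subsection~\ref{5.3.3}. In configuration (c) we have $n=m=1$. The only odd components carrying more than one node are $L_0$ and $M_0$, and each of them carries exactly one node different from the two ramification points, namely $L_0\cap A_1$, respectively $M_0\cap C_1$. For a single node $p$ on a line with involution $s_L$, the condition $\{p\}\cap\{s_L(p)\}=\emptyset$ holds automatically, because $p$ is not a fixed point of $s_L$. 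So ``restricted'' imposes no condition. In the normalized coordinates both spaces are therefore the polydisk $\{(B_0,B_1,B_2)\mid 0<|B_j|<1\}$ with the origin coordinates removed.

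Next we justify the approximation $(B_0,B_1,B_2)\mapsto(4B_0(1+\epsilon),B_1^2(1+\epsilon),4B_2(1+\epsilon))$. By Proposition~\ref{5.2} the fixed points are in good position, so we may use Observations~\ref{convergence}. Truncate $F=\prod_{\gamma}\frac{z-\gamma(0)}{z-\gamma(1)}$ to words of length $\le 2$. The terms we omit differ from $1$ by at most $|\pi|^{-1+\mathrm{length}(\gamma)}$, and these contribute only $(1+\epsilon)$ factors. The leading terms come from $s_0,s_1,s_2$ acting on $0,1$. Each $s_i$ is given by an explicit Möbius formula determined by the pair $a_i,b_i$, and a direct computation evaluated at $b_0,b_1,b_2$ yields the factors $4$ and the square $B_1^2$. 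The square appears because the node $L_0\cap M_0$ is ramified in $\overline X$: the size $|a|$ of that node equals $|\alpha|^2$, as in the subcase $m=1$ of Proposition~\ref{2.1}.

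Finally we invert the map. Given a target $(\tilde B_0,\tilde B_1,\tilde B_2)\in Branch_{(c)}$, the initial equations are $B_1^2=\tilde B_1$, $B_0=\tilde B_0/4$ and $B_2=\tilde B_2/4$. Since the residue characteristic is not $2$, the factor $4$ is a unit. The equation $B_1^2=\tilde B_1$ has exactly two solutions $\pm B_1$ over a suitable extension (compare Remarks~\ref{Rem3.2}), and both have the same absolute value, so both lie in $Fix_{(c)}$. We refine each initial solution by Newton/Hensel iteration. The Jacobian of the leading part is, up to units, $\mathrm{diag}(4,2B_1,4)$, which is invertible since $2$ is a unit. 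Normalizing by $B_1$, the correction terms $\epsilon$ are contractions, so the refinement is unique and converges. The result is exactly two preimages, and they are interchanged by $B_1\mapsto -B_1$. Because the Jacobian determinant never vanishes on the domain, the map is étale, and we obtain an unramified covering of degree two with group $\{\pm1\}$. This matches $2^d=4$ fundamental domains in Corollary~\ref{5.7}: there are two pairs of conjugated nodes, over $L_0\cap A_1$ and over $M_0\cap C_1$.

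The main obstacle is the uniformity of the error terms. We must check that each $\epsilon$ is bounded strictly below $1$ on the whole domain, including near its boundary, for example when $|B_1|$ is close to $1$. We also need that $\epsilon$ depends analytically on $(B_0,B_1,B_2)$ with small derivatives, so that the iteration is a contraction. To handle this, we would express each $\epsilon$ as a convergent product over words of length $\ge 2$ and bound it using the sizes of the double points, as in Observations~\ref{convergence}.
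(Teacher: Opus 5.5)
Your overall route is the paper's. You get $Fix_{(c)}=Branch_{(c)}$ because for $n=m=1$ the restriction condition is vacuous, as noted in (5.2.3). You then take the displayed approximation $(4B_0,B_1^2,4B_2)$ up to unit factors and refine the two initial solutions $B_1=\pm\sqrt{\tilde B_1}$ as in the proof of Proposition~\ref{Prop8.3}. This gives an unramified covering of degree two, matching $2^{d-1}=2$. The paper's proof consists only of the displayed approximation, so your inversion step makes its implicit argument explicit.

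The step that fails as written is your justification of the third coordinate.

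\begin{itemize}
\item Since $b_2$ is close to $a_2=\infty$ and $F(\infty)=1$, the coordinate $F(b_2)/(F(b_2)-1)$ is governed by the small quantity $F(b_2)-1$. Knowing that the omitted factors of $F$ are $1+\epsilon$ with $|\epsilon|\le|\pi|^{-1+\mathrm{length}(\gamma)}$ does not control this quantity, because $|\pi|^2$ can exceed $|F(b_2)-1|=|B_2|$.
\item Nor do the leading terms come from $s_0,s_1,s_2$. Take $s_0(z)=\frac{b_0z}{2z-b_0}$, $s_1(z)=\frac{(1+b_1)z-2b_1}{2z-(1+b_1)}$ and $s_2(z)=2b_2-z$. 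At $z=b_2$ the factors for $\gamma=1,s_1,s_2$ are each $1+b_2^{-1}(1+\epsilon)$, while the factor for $s_0$ is $1+O(b_0/b_2)$.
\item So words of length $\le 1$ give $F(b_2)-1\approx 3/b_2$, that is, third coordinate $3B_2$. This is the wrong constant. In residue characteristic $3$, as in the paper's $\mathbb{Q}_3$ example for (c), it even has the wrong absolute value.
\item The missing $b_2^{-1}$ comes from the length-two word $s_2s_1$.
\end{itemize}

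The repair is to use the path-product form of the estimate in Observations~\ref{convergence}, not the crude bound.

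\begin{itemize}
\item The component of $\overline{\Omega}$ carrying the images of $b_2$ and $\infty$ lies in $\gamma\overline{F}$ only for $\gamma\in\{1,s_2\}$.
\item Its two neighbours lie in $\gamma\overline{F}$ only for $\gamma\in\{1,s_1\}$ and for $\gamma\in\{s_2,s_2s_1\}$, respectively.
\item Hence for every $\gamma\notin\{1,s_1,s_2,s_2s_1\}$ the path passes through a node of size $|B_2|$ and at least one further node. This gives $\bigl|\frac{b_2-\gamma(0)}{b_2-\gamma(1)}-1\bigr|\le|B_2|\cdot|\pi|$.
\item It follows that $F(b_2)-1=4B_2(1+\epsilon)$.
\end{itemize}

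With that estimate in place, your Hensel argument in multiplicative coordinates, using $|2|=|4|=1$, goes through.
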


\begin{observations} $\ $ \\
(1). {\it Rosenhain position for configuration (c)}. \\
Now  $a_0=0, a_2=1,b_2=\infty$ and $0<|b_0|<|a_1|<|b_1|<1<\infty$. One writes $b_0=B_0TB_1$,
 $a_1=TB_1$, $b_1=B_1$ with $0<|B_0|, |B_1|, |T|<1$. The space $Fix_{(c)}$ is identified with
 \[\{ (B_0,T,B_1)\  | \  0<|B_0|, |T|, |B_1|<1\}.\] The space $Branch_{(c)}$ has the same definition. 
 The morphism $FB\colon Fix_{(c)}\rightarrow Branch_{(c)}$ is approximated by    
\[\frac{F_2(b_0)}{F_2(b_0)-1}=16b_0T(1+\epsilon), \ \frac{F_2(a_1)}{F_2(a_1)-1}=4a_1T(1+\epsilon),
\ \frac{F_2(b_1)}{F_2(b_1)-1}=4b_1(1+\epsilon),\]
or, equivalently, by    $(B_0,T,B_1)\mapsto (4B_0,T^2,4B_1)$. 
As expected, this is similar to the first case (c) considered above.  \\

\noindent (2). {\it An example for configuration  (c) using the Rosenhain position.}\\
For a local field $K$ a set of branch points, represented by a $K$-valued tuple $(\tilde{B_0}, \tilde{T},\tilde{B_1})\in Branch_{(c)}$, 
is not the image of a $K$-valued tuple in $Fix_{(c)}$ if $\tilde{T}$ is not a square in $K$. The ramified extension 
$K(\sqrt{\tilde{T}})\supset K$ is needed for the two corresponding sets of fixed points.\\ 

\noindent {\it The curve $X$ over $\mathbb{Q}_3$ defined by $y^2=x(x-3^3)(x-3^2)(x-3)(x-1)$}.\\ 
Its branch points are in Rosenhain position and correspond to the point 
\[(\tilde{B_0}, \tilde{T},\tilde{B_1})=(3,3,3)\in Branch_{(c)}.\] A preimage in $Fix_{(c)}$ is approximated by 
$(3/4, \pm \sqrt{3}, 3/4)$. Thus the fixed points are defined over the ramified extension $\mathbb{Q}_3(\sqrt{3})$ of $\mathbb{Q}_3$.

The reduction of $\mathbb{P}^1_K$ with respect to the branch set $\{0,3^3,3^2,3,1,\infty \}$ 
consists of the four lines $L_0,L_1,L_2,L_3$ with nodes $L_0\cap L_1,L_1\cap L_2, L_2\cap L_3$. The images of  $\{0,3^3\},\{3^2\},\{3\},\{1,\infty\}$ are (in this order) on $L_0,L_1,L_2,L_3$ (see page~\pageref{picture2}, Figure~1(c)).

 In the degree 2 covering of the reduction (given by these branch points), one sees that the covering of the node $L_1\cap L_2$ 
is ramified.

 The node corresponds to the analytic subspace $\{z \ |\  3^{-2}<|z|<3^{-1}\}$ of $\mathbb{P}^1_K$.  The covering is given by the 
 local equation $V^2-3z=0$ and produces the space  $\{ V\ |\  3^{-3/2}<|V|<3^{-1}\}$. 
 This cannot be the subspace corresponding to a node in a semi-stable
(formal) model of $X$. It follows that for a semi-stable model of $X$ the extension $\mathbb{Q}_3(\sqrt{3})\supset \mathbb{Q}_3 $ is needed. \\
 
\noindent (3).  Let a branch locus $\xi\in Branch_{(NN)}$  for a genus two curve be given over the field $K$. Let 
$\eta \in Fix{(NN)}$ be a preimage of $\xi$ (i.e., a corresponding locus of fixed points).   For $(NN)=(a)$,
the  preimage $\eta$ is also defined over $K$. For $(NN)=(b)$, a preimage is, in general, defined over an unramified  quadratic extension 
$L\supset K$. Indeed, using the formulas of \S 8.2, one needs a square root of some $b_1$ with $|b_1|=1$.
In the last case $(NN)=(c)$, a preimage  $\eta$ is defined over a ramified quadratic extension of $K$.  Indeed, according to \S 8.3,
one needs a square root of some $B_1$ with $0<|B_1|<1$.\\

 Let $X\rightarrow \mathbb{P}^1$ denote the hyperelliptic curve corresponding to a branch locus $\xi$ defined over $K$.
 Let $\overline{X}$ denote the minimal semi-stable reduction of $X$ which separates the ramification points.
 Then $\overline{X}$ is defined over $K$ if $(NN)=(a)$. For $(NN)=(b)$ or $(NN)=(c)$, in general, a quadratic extension
 of $K$ is needed, which is unramified for $(NN)=(b)$ and ramified for $(NN)=(c)$.  This illustrates Comments \ref{newpairs} part (2).
\end{observations}

\section{ \rm  Genus three cases of Whittaker curves. }\label{explicit2}

   In this section details regarding the map $FB\colon Fix \rightarrow Branch $ are presented for some of the possible genus $3$ configurations.

\begin{figure}[h]\label{picture}
\centering

\tikzset{
    v/.style={circle, draw, minimum size=4.5mm, inner sep=0pt},
    num/.style={circle, draw, minimum size=4mm, inner sep=0pt}
}

\begin{tikzpicture}[remember picture]

\node (row1base) at (0,-1.6) {};

\node at (0,0) {
\begin{tikzpicture}
    \node[v] (a1) at (0,0) {};
    \node[v] (a2) at (1.1,0) {};
    \node[v] (a3) at (2.2,0) {};
    \node[v] (a4) at (3.3,0) {};
    \node[v] (a5) at (4.4,0) {};
    \node[v] (a6) at (5.5,0) {};

    \draw (a1) -- (a2) -- (a3) -- (a4) -- (a5) -- (a6);

    \node[below=1mm of a1] {2};
    \node[below=1mm of a2] {1};
    \node[below=1mm of a3] {1};
    \node[below=1mm of a4] {1};
    \node[below=1mm of a5] {1};
    \node[below=1mm of a6] {2};

    \node[num] at ($(a6 |- row1base) + (0.5,0)$) {1};
\end{tikzpicture}
};

\node at (7,0) {
\begin{tikzpicture}
    \node[v] (a1) at (0,0) {};
    \node[v] (a2) at (1.1,0) {};
    \node[v] (a3) at (2.2,0) {};
    \node[v] (a4) at (3.3,0) {};
    \node[v] (a5) at (4.4,0) {};

    \draw (a1) -- (a2) -- (a3) -- (a4) -- (a5);

    \node[below=1mm of a1] {2};
    \node[below=1mm of a2] {2};
    \node[below=1mm of a3] {1};
    \node[below=1mm of a4] {1};
    \node[below=1mm of a5] {2};

    \node[num] at ($(a5 |- row1base) + (0.5,0)$) {2};
\end{tikzpicture}
};
s
\node at (13,0) {
\begin{tikzpicture}
    \node[v] (a1) at (0,0) {};
    \node[v] (a2) at (1.1,0) {};
    \node[v] (a3) at (2.2,0) {};
    \node[v] (a4) at (3.3,0) {};

    \draw (a1) -- (a2) -- (a3) -- (a4);

    \node[below=1mm of a1] {2};
    \node[below=1mm of a2] {2};
    \node[below=1mm of a3] {2};
    \node[below=1mm of a4] {2};

    \node[num] at ($(a4 |- row1base) + (0.5,0)$) {3};
\end{tikzpicture}
};

\end{tikzpicture}

\vspace{1cm}

\begin{tikzpicture}

\node (row2base) at (0,-1.8) {};

\node at (0,0) {
\begin{tikzpicture}
    \node[v] (c) at (0,0) {};
    \node[v] (l1) at (-0.8,1.0) {};
    \node[v] (l2) at (-1.6,2.0) {};
    \node[v] (r1) at (0.8,1.0) {};
    \node[v] (r2) at (1.6,2.0) {};
    \node[v] (b) at (0,-1.2) {};

    \draw (c) -- (l1) -- (l2);
    \draw (c) -- (r1) -- (r2);
    \draw (c) -- (b);

    \node[left=1mm of l2] {2};
    \node[left=1mm of l1] {1};
    \node[right=1mm of r1] {1};
    \node[right=1mm of r2] {2};
    \node[right=1mm of b] {2};

    \node[num] at ($(b |- row2base) + (0.5,0)$) {4};
\end{tikzpicture}
};

\node at (4.5,0) {
\begin{tikzpicture}
    \node[v] (c) at (0,0) {};
    \node[v] (l1) at (-0.6,0.8) {};
    \node[v] (l2) at (-1.2,1.6) {};
    \node[v] (l3) at (-1.8,2.4) {};
    \node[v] (r) at (1.0,1.6) {};
    \node[v] (b) at (0,-1.2) {};

    \draw (c) -- (l1) -- (l2) -- (l3);
    \draw (c) -- (r);
    \draw (c) -- (b);

    \node[left=1mm of l3] {2};
    \node[left=1mm of l2] {1};
    \node[left=1mm of l1] {1};
    \node[right=1mm of r] {2};
    \node[left=1mm of b] {2};

    \node[num] at ($(b |- row2base) + (0.5,0)$) {5};
\end{tikzpicture}
};
\node at (9,0) {
\begin{tikzpicture}
    \node[v] (c)  at (0,0) {};
    \node[v] (l1) at (-0.6,0.8) {};
    \node[v] (l2) at (-1.2,1.6) {};
    \node[v] (r)  at (1.0,1.6) {};
    \node[v] (b)  at (0,-1.2) {};

    \draw (c) -- (l1) -- (l2);
    \draw (c) -- (r);
    \draw (c) -- (b);

    \node[left=1mm of l2] {2};
    \node[left=1mm of l1] {2};
    \node[right=1mm of r] {2};
    \node[right=1mm of b] {2};

    \node[num] at ($(b |- row2base) + (0.5,0)$) {6};
\end{tikzpicture}
};
\node at (13.5,0) {
\begin{tikzpicture}
    \node[v] (c)  at (0,0) {};
    \node[v] (l1) at (-0.5,1.0) {};
    \node[v] (l2) at (-1.0,2.0) {};
    \node[v] (r)  at (1.0,1.6) {};
    \node[v] (b)  at (0,-1.2) {};

    \draw (c) -- (l1) -- (l2);
    \draw (c) -- (r);
    \draw (c) -- (b);

    \node[left=1mm of l2] {2};   
    \node[left=1mm of l1] {1};   
    \node[right=1mm of r] {2};   
    \node[right=1mm of c] {1};   
    \node[left=1mm of b] {2};    

    \node[num] at ($(b |- row2base) + (0.5,0)$) {7};
\end{tikzpicture}
};

\end{tikzpicture}

\vspace{1cm}

\begin{tikzpicture}

\node (row3base) at (0,-1.8) {};

\node at (0,0) {
\begin{tikzpicture}
    \node[v] (c) at (0,0) {};
    \node[v] (l) at (-1.0,1.3) {};
    \node[v] (r) at (1.0,1.3) {};
    \node[v] (b) at (0,-1.2) {};

    \draw (c) -- (l);
    \draw (c) -- (r);
    \draw (c) -- (b);

    \node[left=1mm of l] {2};
    \node[right=1mm of r] {2};
    \node[right=1mm of c] {2};
    \node[left=1mm of b] {2};

    \node[num] at ($(b |- row3base) + (0.5,0)$) {8};
\end{tikzpicture}
};

\node at (6,0) {
\begin{tikzpicture}
    \node[v] (c1) at (0,0) {};
    \node[v] (c2) at (1.3,0) {};
    \node[v] (l1) at (-1.0,1.6) {};
    \node[v] (l2) at (-0.8,-1.4) {};
    \node[v] (r1) at (2.2,1.6) {};
    \node[v] (r2) at (2.2,-1.4) {};

    \draw (c1) -- (c2);
    \draw (c1) -- (l1);
    \draw (c1) -- (l2);
    \draw (c2) -- (r1);
    \draw (c2) -- (r2);

    \node[left=1mm of l1] {2};
    \node[left=1mm of l2] {2};
    \node[right=1mm of r1] {2};
    \node[right=1mm of r2] {2};

    \node[num] at ($(r2 |- row3base) + (0.5,0)$) {9};
\end{tikzpicture}
};

\node at (12,0) {
\begin{tikzpicture}
    \node[v] (c)  at (0,0) {};
    \node[v] (ul) at (-1.3,1.3) {};
    \node[v] (ur) at (1.3,1.3) {};
    \node[v] (ll) at (-1.2,-1.2) {};
    \node[v] (lr) at (1.2,-1.2) {};

    \draw (c) -- (ul);
    \draw (c) -- (ur);
    \draw (c) -- (ll);
    \draw (c) -- (lr);

    \node[left=1mm of ul] {2};
    \node[right=1mm of ur] {2};
    \node[left=1mm of ll] {2};
    \node[right=1mm of lr] {2};

    \node[num] at ($(lr |- row3base) + (0.5,0)$) {10};
\end{tikzpicture}
};

\end{tikzpicture}

\caption{All configurations of genus three. The number of branch points that are mapped to a vertex is indicated by
$\emptyset$, 1 or 2.} 
\end{figure}

\subsection{\rm The curve $X_0(39)$ over $\mathbb{Q}_3$ and closed disks configurations}

 The modular curve $X_0(39)$ is hyperelliptic. An explicit equation is obtained
using the MAGMA command {\tt SmallModularCurve(39);} or from \cite[p.~22]{Kenku}. Over the field $\mathbb{Q}_3(\sqrt{-1},\sqrt{3})$ it can
be put in the form
 \[y^2=(x-a_0)(x-b_0)\cdot (x-a_1)(x-b_1)\cdot  (x-a_2)(x-b_2)\cdot (x-a_3)(x-b_3)\]  where eight branch points have absolute
 value 1 and are mapped pairwise to the points $1,2,\lambda +1$, $2\lambda +2\in \mathbb{F}_9$ where 
 $\lambda$ denotes a root of $X^2+2X+2=0$.
 
 From this position of the branch points  it follows that the stable reduction of $X_0(39)$ for $p=3$ (over the field 
 $\mathbb{Q}_3(\sqrt{-1}, \sqrt{3})$) consists of two projective lines over $\mathbb{F}_9$, intersecting normally in 4~points. \\
 
 The configuration of this example, call it $(a,g=3)$, is the one of \S\ref{5.3.1} with
  $g=3$ (see Figure~1(a), page~\pageref{picture2}) and number 10 in Figure~2 above.  The space $Branch_{(a, g=3)}$ can be described as follows.\\  
 
  The branch data consists of tuples $\{a_0,b_0\},\cdots ,\{a_3,b_3\}$ modulo the simultaneous action of ${\rm PGL}_2$. The tuples
  are normalized by imposing the conditions $a_0=0, a_1=1, a_3=\infty$. The rigid space 
  $Branch_{(a,g=3)}$ consists of the tuples $(B_0,T_1,T_2,A,B_3)\in K^5$ given by the (in)equalities
 \[0< |B_0|, |T_1|, |T_2|, |B_3|<1$ and $|A|=|A-1|=1 \mbox{ and the branch points are }\] 
 \[{a}_0=0, {b}_0=B_0 ,  {a}_1=1, {b}_1=1+T_1 , {a}_2=A, {b}_2=A+T_2 , {a}_3=\infty, {b}_3=B_3^{-1}. \]
 In this case $Fix_{(a,g=3)}=Branch_{(a,g=3)}$. 
 The morphism $FB\colon Fix_{(a, g=3)}\rightarrow Branch_{(a, g=3)}$ has the form: $x \mapsto \frac{F(x)}{F(x)-1}$ for all 
 $x\in \{a_0,\dots ,b_3\}$, where $F$ is the standard theta function   
 $\prod_{\gamma \in \Gamma}\frac{z-\gamma(0)}{z-\gamma(1)}$. We approximate $F$ by taking the product over all $\gamma$
 of length $\leq 2$ and translate this in terms of the coordinates of $Branch_{(a,g=3)}$. One obtains
   \[ FB (B_0,T_1,T_2,A,B_3)= (2B_0(1+\epsilon), T_1(1+\epsilon), T_2(1+\epsilon), A(1+\epsilon), 2B_3(1+\epsilon)) \]
where  the ``error term'' $(1+\epsilon)$ satisfies  $|\epsilon|\leq \max (|B_0|, |T_1|,|T_2|, |B_3|).$

  This implies that $FB\colon Fix_{(a, g=3)}\rightarrow Branch_{(a, g=3)}$ is bijective.
   For the above example $X_0(39)$ a unique set of fixed points
  for the Whittaker group can be calculated by approximating $F$ by suitable finite products. 
We refer to \cite[Example~8.4]{KMM} for a further discussion of this. \\ \color{black}

  \begin{proposition}\label{9.1} Let $(P,m)$ denote a configuration of the branch locus consisting of pairs of points $\{\{a_i,b_i\}\}_{i=0}^g$
  in closed disk position, i.e., there are disjoint closed disk $\{D_i\}_{i=0}^g$ with $a_i,b_i\in D_i$ for all $i$.
  
    Then $Fix_{(P,m)}=Branch_{(P,m)}$ and $FB\colon Fix_{(P,m)}\rightarrow Branch_{(P,m)}$ is an isomorphism.
    \end{proposition}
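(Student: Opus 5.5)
Two things have to be shown: that $Fix_{(P,m)}=Branch_{(P,m)}$, and that $FB$ is an isomorphism. For the first, I would read off the shape of the configuration. By \S\ref{5.3.1}, the closed disk position means that the end vertices of $P$ are exactly $v_0,\dots ,v_g$ with $m(a_i)=m(b_i)=v_i$, and every other vertex is of type (a).

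Every edge of $P$ separates $\mathbb{B}$ into unions of full pairs, so it is even. Hence the only odd components of $\overline{(\mathbb{P}^1,M(\mathbb{B}))}$ are the end lines $L_i$. Each $L_i$ carries the images of $a_i,b_i$ and exactly one node. The restriction condition $nodes(L_i)\cap s_L(nodes(L_i))=\emptyset$ is then vacuous, because $nodes(L_i)$ is computed after removing $p_1,p_2$ and is therefore empty. This gives $Fix_{(P,m)}=Branch_{(P,m)}$, exactly as in Example \ref{2.3} (5.2.1).

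For the isomorphism I would use the counting argument behind Theorem~\ref{new5.8}, namely Corollary~\ref{5.7}. By Remarks~\ref{new4.4}(3), $\overline{X}^d$ is obtained by doubling all even edges and even vertices, that is, every edge and every interior vertex. A special maximal subtree $R$ must contain the end vertices, since they are not doubled, and it must be connected and map isomorphically onto $P$. Choosing one copy of an interior vertex therefore forces the choice of copy for all adjacent interior vertices and edges. Since the interior vertices form a connected subtree, the only choices are $R$ and $\sigma R$, as already noted in \S\ref{5.3.1}.

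So the number of fundamental domains is $2$, giving $d=1$, and by Corollary~\ref{5.7} every fibre of $FB$ has $2^{d-1}=1$ element. Thus $FB$ is bijective. Combined with Theorem~\ref{new5.8}, $FB$ is étale, and an étale bijective morphism of rigid spaces is an isomorphism.

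The main obstacle is that Theorem~\ref{new5.8} and Remarks~\ref{new4.9}(3) are stated with only sketched proofs. For that reason I would also give a direct analytic argument, generalizing the genus $2$ and $3$ computations. Normalize $a_0=0$, $a_1=1$, $a_g=\infty$ and choose coordinates on $Branch_{(P,m)}$: the positions of the interior nodes (units and sizes), together with the small quantities $T_i=b_i-a_i$ measured in the local coordinate of $D_i$. Then approximate $F=\prod_\gamma \frac{z-\gamma(0)}{z-\gamma(1)}$ by the words of length $\le 2$, using the estimate of Observations~\ref{convergence}. I expect that in these coordinates $FB$ has the form $x\mapsto c_x\, x\,(1+\epsilon)$ with explicit constants $c_x\in\{1,2\}$ (units, since the residue characteristic is not $2$) and $\|\epsilon\|<1$.

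The reason is that $s_j(z)$ for $j\neq i$ moves $a_i,b_i$ only by a factor $1+\epsilon$ relative to $D_i$'s scale. Only $s_i$ itself and the normalization at $0,1,\infty$ contribute leading constants. From this shape, a Newton/contraction argument shows that each fibre has exactly one point and that the inverse is analytic. The delicate part is controlling the error terms uniformly for deep trees. Here I would use the closed disk property: $\gamma(D_i)$ for $\gamma$ of length $\ge 1$ lands inside some $D_j$ at a strictly smaller scale, so that the correction factors are all of the form $1+\epsilon$.
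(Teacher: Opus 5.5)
Your argument is essentially the paper's: the paper deduces Proposition~\ref{9.1} directly from Theorem~\ref{new5.8}, using (as in \S\ref{5.3.1} and Examples~\ref{2.3}) that restriction imposes no condition in closed disk position and that the only fundamental domains are $F_0$ and $\sigma F_0$, so $d=1$ and the Galois group is trivial; your analytic back-up argument is not needed. Two small corrections: first, $nodes(L_i)$ is not empty but consists of the single node $q$ of $L_i$ (the points $p_1,p_2$ are $\bar{a}_i,\bar{b}_i$), and restriction holds because $s_{L_i}(q)\neq q$; second, conclude from the trivial Galois group of the covering in Theorem~\ref{new5.8} rather than from ``\'etale and bijective'', which does not imply an isomorphism of rigid spaces (e.g.\ $\{|x|<1\}\sqcup\{|x|=1\}\rightarrow\{|x|\leq 1\}$).
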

 The curve $X_0(39)$ over $\mathbb{Q}_{3}(\sqrt{-1},\sqrt{3})$ is a case where the fixed points are in ``closed disk position''. 
 Proposition~\ref{9.1} is an immediate consequence of Theorem~\ref{new5.8}.  We note that \cite{Kad1} also provides an indication of a proof
of Proposition~\ref{9.1}.
 
 \subsection{\rm Configuration $(b, g=3)$}
 This configuration of branch points, denoted by $(b,g=3)$,  is given by a tree with vertices $v_0, v_1,v_2,v_3$ and the edges
 $[v_0, v_1], [v_0,v_2], [v_0, v_3]$.  The branch points  $a_i,b_i,\ i=0,\dots , 3$ are mapped to the vertex $v_i$. 
 This is the situation (5.2.2) on page~\pageref{example5.2.2}, with $g=3$; also given as number 8 in Figure~2 on page~\pageref{picture}.
  The space $Branch_{(b,g=3)}$ is obtained by the normalization
 \[ \begin{array}{l}a_0=0, b_0=\infty, a_1=1, b_1=1+B_1, a_2=T_2,b_2=T_2+B_2, a_3=T_3, b_3=T_3+B_3\\
 \mbox{with } 0<|B_1|, |B_2|, |B_3| <1 \mbox{ and } |T_2|=|T_3|=1 \mbox{ and the images of  $1, T_2,T_3$ }\\
 \textrm{in the residue field $k$ are distinct (i.e., } |(T_2-1)(T_3-1)(T_2-T_3)|=1 ).  \end{array}\]
  The subspace $Fix_{(b,g=3)}$ of $Branch_{(b,g=3)}$ is given by the {\it restrictions}\\
  $|(T_2^2-1)(T_3^2-1)(T_2^2-T_3^2)|=1$. The map $FB\colon Fix_{(b,g=3)} \rightarrow Branch_{(b,g=3)}$ 
  is obtained  by applying $FF$ to the fixed points. Here $FF(z):=\frac{F(z)}{F(z)-1}$ and $F(z)=\prod _{\gamma \in \Gamma} \frac{z-\gamma(0)}{z-\gamma (1)}$.

 Let $F^*$ denote the 
product above, taken over all $\gamma$ of length $\leq 2$. The normalized eight fixed points
$(0,1,\infty, 1+B_1,T_2,T_2+B_2,T_3,T_3+B_3)$ are mapped by $F^*$ to the set of points
$(0,\infty, 1,F^*(1+B_1),\dots ,F^*(T_3+B_3))$. Now normalizing by applying the map $t\mapsto \frac{t}{t-1}$
 results in a formula, approximating $FB$, namely
 \[ (B_1,B_2,B_3,T_2,T_3) \mapsto (4B_1,4T_2B_2, 4T_3B_3, T_2^2,T_3^2).  \] 
This formula confirms in an explicit way a special case of Theorem~\ref{new5.8}:
  \begin{proposition} $FB\colon Fix_{(b,g=3)}\rightarrow Branch_{(b,g=3)}$ is an unramified  rigid analytic Galois covering with
 Galois group $(\mathbb{Z}/2\mathbb{Z})^2$.
 \end{proposition}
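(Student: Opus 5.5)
The plan is to read the structure of $FB$ off the displayed approximation
\[(B_1,B_2,B_3,T_2,T_3)\mapsto (4B_1,4T_2B_2,4T_3B_3,T_2^2,T_3^2),\]
which is correct up to factors $(1+\epsilon)$ with $|\epsilon|<1$. Then I would upgrade this to an exact statement by the same successive-approximation argument used in the proof of Proposition~\ref{Prop8.3}.

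First I will check that the approximating map $\phi$ sends $Fix_{(b,g=3)}$ into $Branch_{(b,g=3)}$. The conditions $0<|B_i|<1$ and $|T_i|=1$ are clearly preserved, since $|4|=1$ because the residue characteristic is not $2$. For the new parameters $\tilde T_i=T_i^2$ one has $|\tilde T_2-1|=|T_2^2-1|=1$, and likewise for $\tilde T_3-1$ and $\tilde T_2-\tilde T_3$; this is exactly the restriction defining $Fix_{(b,g=3)}$. Conversely, I take $(\tilde B_1,\tilde B_2,\tilde B_3,\tilde T_2,\tilde T_3)\in Branch_{(b,g=3)}$. The equations $T_i^2=\tilde T_i$ ($i=2,3$) have, over a suitable extension, four solutions $(\pm\sqrt{\tilde T_2},\pm\sqrt{\tilde T_3})$. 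Since $|\tilde T_2-1|=|(T_2-1)(T_2+1)|=1$ and $|T_2\pm1|\le1$, each factor has absolute value $1$. The same argument gives $|T_2^2-T_3^2|=|\tilde T_2-\tilde T_3|=1$. Hence all four solutions are restricted, and the $B_i$ are then uniquely determined by $B_1=\tilde B_1/4$ and $B_i=\tilde B_i/(4T_i)$. So $\phi$ is a $4$-to-$1$ étale map. Its deck group is $\{\pm1\}^2$, acting by $(T_2,T_3,B_2,B_3)\mapsto(\pm T_2,\pm T_3,\pm B_2,\pm B_3)$ with matching signs, and this action preserves $Fix_{(b,g=3)}$. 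The Jacobian of $\phi$ is a unit times $T_2T_3$, which is nowhere zero, so $\phi$ is unramified.

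Next, I pass from $\phi$ to $FB$ itself. Write $FB=\phi\cdot(1+\epsilon)$ componentwise, where the $\epsilon$ are analytic and bounded by $\max|B_i|<1$ (Observations~\ref{convergence}). For a given branch point and a chosen initial solution, I iterate the corrections by solving $\phi(x_{n+1})=\tilde x/(1+\epsilon(x_n))$. In each step $T_i$ is determined as the square root closest to the previous one; this is possible and unique because the four roots are mutually at distance $1$, and the corrections are small. This gives a contracting process, so each of the four initial solutions converges to a unique exact preimage. Good position and convergence of the theta products along the iteration are guaranteed by Proposition~\ref{5.2}, since all iterates stay in the restricted locus. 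Hence every fibre of $FB$ has exactly $4$ points, which is consistent with Corollary~\ref{5.7}: here $d=3$, one pair of conjugate vertices and two pairs of conjugate edges, giving $2^{d-1}=4$. The map is étale because it is an analytic perturbation of the étale map $\phi$ with the same local degree one at each point.

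Finally, the Galois group. The deck transformations come from Theorem~\ref{new5.8}: changing the fundamental domain replaces $s_i$ by $s_0s_is_0$ for $i=2,3$, as in Case (b), and this gives an action of $\{\pm1\}^2$ on the fibres. Its reduction agrees with the sign changes $T_i\mapsto -T_i$ above, so the action is free and transitive on each fibre. The main obstacle is the uniformity of the estimate on $\epsilon$, which is needed so that the correction iteration contracts on all of $Fix_{(b,g=3)}$ and not just near a point. For this I would rely on the bound $|\epsilon|\le\max|B_i|$ that follows from the length estimate in Observations~\ref{convergence}.
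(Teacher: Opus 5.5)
Your proposal is correct and follows essentially the paper's route: the paper reads the proposition off the displayed approximation $(B_1,B_2,B_3,T_2,T_3)\mapsto(4B_1,4T_2B_2,4T_3B_3,T_2^2,T_3^2)$ as an explicit confirmation of Theorem~\ref{new5.8}, with the four initial solutions refined as in the proof of Proposition~\ref{Prop8.3}. One small slip: in $(b,g=3)$ all four vertices are odd and all three edges are even, so $d=3$ counts three pairs of conjugate edges and no conjugate vertices (the fibre count $2^{d-1}=4$ is unaffected); also, your sign action $(T_i,B_i)\mapsto(-T_i,-B_i)$ is exact and not only visible on reductions, because $s_0(z)=-z$, the group $\Gamma$ and the points $a_0=0,a_1=1$ are unchanged, and hence $F(-z)=F(z)$.
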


\subsection{\rm Another configuration with genus three}

Consider the configuration of the eight branch points $\{a_0,b_0\ldots , a_3,b_3\}$ such that $a_0=0$, $a_3=1$, $b_3=\infty$ and it holds that
 the absolute values of $0,b_0,a_1,b_1,a_2,b_2,1$ are strictly increasing. This is number (1) in Figure~2 on page~\pageref{picture}.
 Call this configuration $(cc,g=3)$. 

 The space $Branch_{(cc,g=3)}$  is the set of tuples  $(B_0,A_1,B_1,A_2,B_2)\in K^5$ satisfying the conditions  $0<|B_0|, |A_1|, |B_1| ,|A_2| ,|B_2| <1$.
The other branch points are defined by
 \[ b_0=B_0A_1B_1A_2B_2; a_1=A_1B_1A_2B_2; b_1=B_1A_2B_2;a_2=A_2B_2; b_2=B_2.\]
 Now $Fix_{(cc,g=3)}=Branch_{(cc, g=3)}$ and  $FB\colon Fix_{(cc,g=3)}\rightarrow Branch _{(cc,g=3)}$,
   approximated as before, produces the map
 \[(B_0,A_1,B_1,A_2,B_2)\mapsto (4B_0(1+\epsilon) ,A_1^2(1+\epsilon),4B_1(1+\epsilon),A_2^2(1+\epsilon), 4B_2(1+\epsilon)),  \]
  One concludes the following.
  \begin{proposition} $FB\colon Fix_{(cc,g=3)}\rightarrow Branch_{(cc, g=3)}$ is a rigid analyic  unramified  Galois covering with
   group $(\mathbb{Z}/2\mathbb{Z})^2$.
   \end{proposition}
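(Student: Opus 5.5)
The plan is to read off the structure of $FB$ from the explicit leading-order formula, rather than from the general Theorem~\ref{new5.8}. The approximation
\[(B_0,A_1,B_1,A_2,B_2)\mapsto (4B_0(1+\epsilon),A_1^2(1+\epsilon),4B_1(1+\epsilon),A_2^2(1+\epsilon),4B_2(1+\epsilon))\]
is to be checked first. That check is done by computing $F^*$, the product over $\gamma$ of length $\leq 2$, on the normalized fixed points. It uses Observations~\ref{convergence}, which say that the remaining factors contribute $1+\epsilon$ with $|\epsilon|<1$, uniformly on $Fix_{(cc,g=3)}$.

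The first structural step is that $Fix_{(cc,g=3)}=Branch_{(cc,g=3)}$. Every component of the chain in configuration (1) of Figure~2 is odd, with at most one node other than its two distinguished points. So the condition $nodes(L)\cap s_L(nodes(L))=\emptyset$ is automatic, exactly as in configuration (c) of genus two.

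Next I solve $FB(x)=y$ for a given $y=(\tilde B_0,\tilde A_1,\tilde B_1,\tilde A_2,\tilde B_2)$. Initial solutions are $B_j=\tilde B_j/4$ and $A_i=\pm\sqrt{\tilde A_i}$. Since $|4|=1$ in residue characteristic $\neq 2$, these lie in the polydisc $0<|\cdot|<1$. There are exactly $4$ sign choices. I then refine each initial solution by a Newton/contraction iteration, as in the proof of Proposition~\ref{Prop8.3}. After the substitution $A_i=\sqrt{\tilde A_i}\,(1+\delta_i)$ and $B_j=\tilde B_j(1+\delta_j)/4$, the map $FB$ becomes the identity plus a term of norm $<1$ in the multiplicative coordinates. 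This gives a unique solution near each initial one and a local analytic inverse. Hence $FB$ is étale, with fibres of cardinality $4$.

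For the Galois statement, I observe that the sign changes $A_i\mapsto -A_i$ should lift to genuine automorphisms of $Fix$ commuting with $FB$. By Remarks~\ref{new4.9}(3) and Corollary~\ref{5.7}, these correspond to choosing, at each of the two ramified-type nodes above $L_1\cap L_2$-like positions, which conjugate node to delete. Equivalently, they correspond to replacing some $s_i$ by $s_0s_is_0$, which leaves $\Gamma$ and hence the branch locus unchanged. Fibres are torsors under these two commuting involutions, so the group is $(\mathbb{Z}/2\mathbb{Z})^2$; this matches $d=3$ in Theorem~\ref{new5.8}.

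The main obstacle is making the error term uniform, so that $|\epsilon|<1$ holds on the whole space and not merely near the boundary. This is needed for the contraction step to work globally and to guarantee that the two deck transformations act without fixed points. I would get this from the size estimates for double points in Observations~\ref{convergence} together with the distance bound in Remarks~\ref{new4.4}(2).
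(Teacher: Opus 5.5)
Your route is the paper's own. The paper likewise records the leading-order form of $FB$, takes the initial solutions $B_j=\tilde B_j/4$, $A_i=\pm\sqrt{\tilde A_i}$, refines them as in Proposition~\ref{Prop8.3}, and gets the group from Theorem~\ref{new5.8}. Your check that $Fix_{(cc,g=3)}=Branch_{(cc,g=3)}$ is fine, and so is the contraction argument giving \'etaleness and fibres of size $4$.

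\textbf{The gap is in your identification of the deck transformations.} Write $L_0,\dots,L_5$ for the chain carrying $\{0,b_0\},a_1,b_1,a_2,b_2,\{1,\infty\}$. The edges $L_1L_2$ and $L_3L_4$, measured by $A_1=a_1/b_1$ and $A_2=a_2/b_2$, are odd. Above them $\overline{X}$ has a single ramified node, so there is no pair of conjugate nodes to choose from there. The doubled nodes lie over the three even edges $L_0L_1$, $L_2L_3$, $L_4L_5$, with parameters $B_0,B_1,B_2$. That is why $d=3$: there are $2^3$ fundamental domains, identified in pairs $R\leftrightarrow\sigma R$, which gives $(\mathbb{Z}/2\mathbb{Z})^2$. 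Two binary choices, as you describe them, would give $d=2$ and a group of order $2$, contradicting your own appeal to $d=3$.

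Likewise, ``replace $s_i$ by $s_0s_is_0$'' is wrong in this labelling. Here $s_0$ fixes $0,b_0$ on an end component. For example, $s_1\mapsto s_0s_1s_0$ moves $\{a_1,b_1\}$ into a small cluster around $b_0/2$, which is the point $s_{L_0}(\text{node of }L_0)$. The resulting fixed points are not restricted, so this is not a point of $Fix_{(cc,g=3)}$ at all.

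\textbf{The correct description.} The element $s_1$ is the lift of $\sigma$ stabilizing the component of $\overline{F}$ over $\tilde L_1$. It swaps the two nodes of $\tilde L_1$ over $L_0\cap L_1$. So switching the choice at $L_0L_1$ replaces $s_0$ by $s_1s_0s_1$ and leaves $s_1,s_2,s_3$ unchanged. Use $s_1(0)=2a_1b_1/(a_1+b_1)\approx 2a_1$ and $s_1'(0)\approx -1$, and renormalize by $z\mapsto (z-s_1(0))/(1-s_1(0))$. Then this move acts as $A_1\mapsto -A_1(1+\epsilon)$, with the other coordinates unchanged to leading order. Symmetrically, switching at $L_4L_5$ is $s_3\mapsto s_2s_3s_2$ and gives $A_2\mapsto -A_2(1+\epsilon)$. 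Switching at $L_2L_3$ is their product modulo $\sigma$. Your intuition that each sign belongs to a ramified node survives only in this form: the conjugating involutions are $s_1$ and $s_2$, whose fixed points lie on the two sides of the ramified nodes.

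\textbf{The ``main obstacle'' you name is not one.} A bound $|\epsilon|\le c<1$ that is uniform on all of $Fix_{(cc,g=3)}$ is false, since the double-point sizes tend to $1$. But it is not needed. Every solution over a given target lies on the affinoid where each coordinate has the absolute value of the corresponding initial solution. There the double-point sizes are fixed, and Observations~\ref{convergence} give such a bound. That suffices for the contraction, because \'etaleness is local on the base.
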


 \subsection{\rm Again a configuration with genus three}

 The configuration number (4) of Figure~2 on page~\pageref{picture}, called $X$ here, has branch space $Branch_X$, defined  in normalized
  form by the sequence of fixed points
 $0,b_0,a_1,a_2,b_2,b_1,1,\infty$ with \[0<|b_0|<|a_1|<|a_2|=|b_2|<|b_1|<1<\infty \mbox{ and is parametrized}\]
  by the variables $B_0,B_1,B_2,A,T$, all satisfying $0<| *|<1$ and  with the following equalities 
 \[ b_0=B_0B_1B_2A;\ a_1=B_1B_2A;\ a_2=B_1B_2(1+T);\ b_2=B_2B_1; b_1=B_1.\]
 The space of fixed points $Fix_X$ coincides with $Branch_X$ since ``restriction'' is no extra condition for this configuration. 
 The theta function $F$ is approximated by $F^*$, the product over the five terms with $\gamma$ of length $\leq 1$.
 The fixed points $0,1,\infty, b_0,a_1,a_2,b_2,b_1$ are send by $F^*$ to, approximately, $0,\infty,1$ and
 \begin{small}
 \[\frac{-64(1+T)}{(2+T)^2}B_0B_1B_2^2A^2,\frac{-16(1+T)}{(T+2)^2}B_1B_2^2A^2, \frac{8(1+T)^3}{2+T}B_1B_2^2,
 \frac{8}{2+T}B_1B_2^2, -4B_1.\] \end{small}
 One applies the transformation $t\mapsto \frac{t}{t-1}$ to these data in order to obtain the same normalization of $Branch_X$.
 Then the map $FB\colon Fix_X\rightarrow Branch_X$ is approximated by the following
 \[( B_0,B_1,B_2,A,T)\mapsto (4B_0,-4B_1, \frac{-2}{2+T}B_2^2, -2(1+T)A^2, (1+T)^3-1-T).\]
 This leads to the result
 \begin{proposition}
 The morphism $FB\colon Fix_X\rightarrow Branch_X$ is a rigid analytic unramified Galois covering with Galois group
 $(\mathbb{Z}/2\mathbb{Z})^2$. 
  \end{proposition}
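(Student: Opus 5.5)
The statement to prove is the last proposition: for configuration number (4), $FB\colon Fix_X\rightarrow Branch_X$ is an unramified Galois covering with group $(\mathbb{Z}/2\mathbb{Z})^2$. The plan copies the pattern of Propositions~\ref{Prop8.3} and \ref{5.4}. First solve the approximate equations explicitly. Then show that each approximate solution lifts uniquely to an exact preimage, by a contraction argument in the coordinates $(B_0,B_1,B_2,A,T)$.

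\textbf{Step 1: approximate fibre.} Given $(\tilde B_0,\tilde B_1,\tilde B_2,\tilde A,\tilde T)\in Branch_X$, solve the leading-order system
\[4B_0=\tilde B_0,\quad -4B_1=\tilde B_1,\quad \tfrac{-2}{2+T}B_2^2=\tilde B_2,\quad -2(1+T)A^2=\tilde A,\quad (1+T)^3-(1+T)=\tilde T.\]
\begin{itemize}
\item The last equation reads $u(u-1)(u+1)=\tilde T$ with $u=1+T$. The root with $|u-1|<1$ is unique by Hensel, since $|u(u+1)|=1$ when $u\equiv1$ (residue characteristic $\neq2$). This gives $T=\tilde T/2\,(1+\epsilon)$, with $|T|=|\tilde T|$.
\item $B_0$ and $B_1$ are then determined, and so is $|2+T|=1$.
\item $B_2^2=-\tfrac{2+T}{2}\tilde B_2$ has exactly two solutions $\pm B_2$.
\item $A^2=-\tilde A/(2(1+T))$ has exactly two solutions $\pm A$.
\end{itemize}
All of these satisfy $0<|\ast|<1$, so they lie in $Fix_X$ (over $K$ after adjoining the two square roots). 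Hence the approximate fibre has exactly four points, acted on simply transitively by the sign changes $(B_2,A)\mapsto(\pm B_2,\pm A)$.

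\textbf{Step 2: exact lifting.} Write the true map as the leading term times $(1+\epsilon)$, where by Observations~\ref{convergence} $\epsilon$ is an analytic function with $|\epsilon|\le\max(|B_0|,|B_1|,|B_2|,|A|,|T|)<1$; the sizes of the double points bound the tail contributions. Near each approximate solution, the map $(B_0,B_1,B_2^2,A^2,T)\mapsto FB$ is then the identity up to a unit factor of the form $1+\epsilon$ in each coordinate. A Newton or fixed-point iteration converges to a unique exact solution in the same residue polydisk; this is the ``stepwise refining'' of Proposition~\ref{Prop8.3}. This yields exactly four preimages, and shows $FB$ is étale: the Jacobian in the variables $(B_0,B_1,B_2,A,T)$ is invertible, because $\partial(B_2^2)/\partial B_2=2B_2$ and $\partial(A^2)/\partial A=2A$ are nonzero and $2$ is a unit. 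Combined with Corollary~\ref{5.7}, this is consistent with $2^{d-1}=4$.

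\textbf{Step 3: Galois structure.} The deck group is generated by the two commuting involutions lifting the sign changes of $B_2$ and $A$. Conceptually these are the changes of fundamental domain (omitting the other node above each of the two even edges), so the group is $\{\pm1\}^2$, as in Theorem~\ref{new5.8}.

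\textbf{Main obstacle.} The delicate point is justifying that the error factors really are $1+\epsilon$ with $|\epsilon|<1$ uniformly on $Fix_X$, especially in the coordinate $T$. There the leading term $(1+T)^3-1-T\approx2T$ arises by cancellation, so the error must be shown to be $O(T)\cdot\epsilon$ rather than merely $\epsilon$. I would handle this by computing the ratio $\frac{F(a_2)-F(b_2)}{F(b_2)}$ directly, so that the common factor $T$ is extracted before estimating the tails by Observations~\ref{convergence}.
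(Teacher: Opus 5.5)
Your strategy is the paper's. You read off the fibre from the approximate formula for $FB$ (four points, differing by the signs of $B_2$ and $A$), refine each approximate solution, and get the Galois group from the change of fundamental domain as in Theorem~\ref{new5.8}. However, the point you flag as the main obstacle is a genuine gap, and your proposed remedy does not close it.

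Observations~\ref{convergence} only bounds each factor separately: $\bigl|\frac{z-\gamma(0)}{z-\gamma(1)}-1\bigr|\le|\pi|^{-1+\mathrm{length}(\gamma)}$. Since $|T|\le|\pi|$, this gives no control at relative order $T$, and in fact the tail contributes at leading order. For $\gamma=s_2s_1$ one has $s_2s_1(0)\approx\frac{2a_2b_2}{a_2+b_2}$ and $s_2s_1(1)\approx\frac{a_2+b_2}{2}$, exactly as for $\gamma=s_2$. So this length-two factor is $\approx\frac{2(1+T)}{2+T}$ at $a_2$ and $\approx\frac{2}{2+T}$ at $b_2$, and it multiplies $F(a_2)/F(b_2)$ by another factor $1+T$. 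Consequently neither the length-$\le 1$ value $(1+T)^3$ nor the expression $(1+T)^3-1-T$ that you solve in Step~1 is the leading term of the $T$-coordinate. Your Step~2 claim that the exact map equals this leading term times $1+\epsilon$ fails there. The true preimage has $T\approx\tilde T/4$, at distance $|T|$ from your starting point $\tilde T/2$, so the contraction does not start in the right neighbourhood.

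The repair is a cross-ratio argument in place of Observations~\ref{convergence}. Write
$F(a_2)/F(b_2)=\prod_{\gamma\in\Gamma}[a_2,b_2;\gamma(0),\gamma(1)]$, where $[a_2,b_2;x,y]=\frac{(a_2-x)(b_2-y)}{(b_2-x)(a_2-y)}$. Three facts do the work.

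\begin{enumerate}
\item This cross ratio is unchanged when $x,y$ are replaced by $s_2x,s_2y$, since $s_2$ fixes $a_2$ and $b_2$.
\item $|[a_2,b_2;x,y]-1|$ is the product of the sizes of the double points of $\overline{\Omega}$ that separate the geodesic from $a_2$ to $b_2$ from the geodesic from $x$ to $y$. This is at most $|\pi T|$ (with $\pi$ as in Observations~\ref{convergence}), unless the central component $C$ of $\overline{F}$ (the one with coordinate $z/(B_1B_2)$), or $s_2C$, separates $\gamma(0)$ from $\gamma(1)$.
\item That happens if and only if $\gamma^{-1}C=C$ or $\gamma^{-1}s_2C=C$. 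The reason is that the components on the path from $0$ to $1$ in $\overline{F}$ lie in distinct $\Gamma$-orbits. Since $\mathrm{Stab}(C)=\{1,s_1\}$ ($s_1$ acts on $C$ by $\bar z\mapsto-\bar z$), only $\gamma\in\{1,s_1,s_2,s_2s_1\}$ occur, and each contributes $1+T$ to first order.
\end{enumerate}

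Hence $1+\tilde T=(1+T)^4\bigl(1+O(|\pi T|)\bigr)$, i.e. $\tilde T=4T(1+\epsilon)$. Since $4$ is a unit, this coordinate is still an isomorphism of punctured disks. Your Steps~2--3 then go through with $T\approx\tilde T/4$, and no Hensel step is needed.

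Treat the other constants of the displayed approximation likewise, only up to units recomputed from the listed $F^*$-values. Those values give $4B_1$ and $\frac{-2(1+T)}{2+T}A^2$, not $-4B_1$ and $-2(1+T)A^2$. This does not change the count of four sheets, but your lifting step needs the correct units.
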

 
 \subsection*{Acknowledgement} We thank Steffen M\"{u}ller for his interest in this work and for various useful suggestions,
and Arnold Meijster who, aided by ChatGPT, transformed our original drawings into the figures presented on the pages~\pageref{picture2} and \pageref{picture}.

 \begin{small}

\end{small}
 \end{document}